\chardef\bslash=`\\ 
\numberwithin{equation}{section}
\newtheorem{theorem}{Theorem}[section]
\newtheorem{corollary}[theorem]{Corollary}
\newtheorem{lemma}[theorem]{Lemma}
\newtheorem{proposition}[theorem]{Proposition}
\theoremstyle{remark}
\newtheorem{remark}[theorem]{Remark}
\theoremstyle{definition}
\newtheorem{definition}[theorem]{Definition}
\newcommand\bp{\begin{proof}}
\newcommand\ep{\end{proof}}
\newcommand\aaa{\mathfrak a}
\newcommand\bb{\mathfrak b}
\newcommand\dd{\mathfrak d}
\newcommand\mm{\mathfrak m}
\newcommand\pp{\mathfrak p}
\newcommand\qq{\mathfrak q}
\newcommand{\C}{\mathbb C}
\newcommand{\F}{\mathbb F}
\newcommand{\K}{\mathbb K}
\newcommand{\N}{\mathbb N}
\newcommand{\Q}{\mathbb Q}
\newcommand{\R}{\mathbb R}
\newcommand{\Z}{\mathbb Z}
\newcommand\A{\mathbb{A}}
\newcommand\ak{{\mathbb A}}
\newcommand\aks{{\mathbb A}_S}
\newcommand\akf{{\mathbb A}_{f}}
\newcommand{\ab}{\operatorname{ab}}
\newcommand\chr{\mathds 1}
\newcommand\CT{\C_\infty\{\tau\}}
\newcommand\oa{{\mathcal O}_{\mathbb A}}
\newcommand\oas{{\mathcal O}_{\mathbb A,S}}
\newcommand\OO{{\mathcal O}}
\newcommand\ohs{{\hat{\OO}^*}}
\newcommand\ord{\operatorname{ord}}
\newcommand\Spl{\operatorname{Spl}}
\newcommand{\imo}{I_\mm}
\newcommand{\Pic}{\operatorname{Pic}}
\newcommand{\pplus}{\PP^+}
\newcommand{\pplusmo}{\PP_\mm^+}
\newcommand{\picplusmo}{\Pic_\mm^+(\OO)}
\newcommand{\tor}{\operatorname{tor}}
\newcommand\CC{\mathcal C}
\newcommand\D{\mathcal D}
\newcommand{\GG}{\mathcal G}
\newcommand{\RR}{\mathcal R}
\newcommand{\PP}{\mathcal P}
\newcommand\eps{\varepsilon}
\newcommand\Gal{\operatorname{Gal}}
\newcommand{\id}{\operatorname{id}}
\newcommand{\sgn}{\operatorname{sgn}}
\newcommand\enu[1]{\smallskip\newline\makebox[5mm][l]{\rm(#1)}}
\begin{document}

\title[Bost-Connes systems]
{Bost-Connes systems associated with function fields}

\author[S. Neshveyev]{Sergey Neshveyev}
\address{Department of Mathematics, University of Oslo,
P.O. Box 1053 Blindern, N-0316 Oslo, Norway}

\email{sergeyn@math.uio.no}

\author[S. Rustad]{Simen Rustad}
\address{Department of Mathematics, University of Oslo,
P.O. Box 1053 Blindern, N-0316 Oslo, Norway}

\email{simenru@math.uio.no}

\thanks{Supported by the Research Council of Norway.}

\subjclass[2000]{Primary 46L55; Secondary 11R37, 11R56}

\keywords{Bost-Connes systems, function fields, KMS-states, type III actions, Drinfeld modules}

\begin{abstract}
With a global function field $K$ with constant field $\F_q$, a finite set $S$ of primes in $K$ and an abelian extension $L$ of $K$, finite or infinite, we associate a C$^*$-dynamical system. The systems, or at least their underlying groupoids, defined earlier by Jacob using the ideal action on Drinfeld modules and by Consani-Marcolli using commensurability of $K$-lattices are isomorphic to particular cases of our construction. We prove a phase transition theorem for our systems and show that the unique KMS$_\beta$-state for every $0<\beta\le1$ gives rise to an ITPFI-factor of type III$_{q^{-\beta n}}$, where $n$ is the degree of the algebraic closure of $\F_q$ in $L$. Therefore for $n=+\infty$ we get a factor of type III$_0$. Its flow of weights is a scaled suspension flow of the translation by the Frobenius element on $\Gal(\bar \F_q/\F_q)$.
\end{abstract}

\date{December 26, 2011; minor corrections March 11, 2014}

\maketitle

\bigskip

\section*{Introduction}

In 1995 Bost and Connes~\cite{bos-con} constructed a quantum statistical mechanical system with remarkable connections to number theory. Among other properties, this system has phase transition with spontaneous symmetry breaking: the symmetry group of the unique equilibrium state for large temperatures contains $\Gal(\Q^{\ab}/\Q)$, while for small temperatures~$T$ the extremal equilibrium states have partition function $\zeta(T^{-1})$ and no symmetries in the Galois group. After numerous and only partially successful attempts by a number of people, the accepted correct analogue of this system for an arbitrary number field was defined by Ha and Paugam in 2005~\cite{ha-pa}. Soon after that analogues of the Bost-Connes system for function fields were proposed by Jacob~\cite{Ja} and Consani-Marcolli~\cite{CM}. Their constructions are not directly comparable, since Jacob works within the traditional C$^*$-algebraic setting, while Consani and Marcolli develop quantum statistical mechanics over fields of non-zero characteristic. In both cases, however, the algebras of observables are defined using groupoids, and these can be compared. Both constructions are motivated by the explicit class field theory for function fields. The standard formulation of the latter fixes a prime $\infty$ in a function field $K$ and describes either the maximal abelian extension $K^{\ab,\infty}$ of $K$ that is completely split at infinity, or a slightly larger extension $\K$ that is totally ramified over $K^{\ab,\infty}$ at infinity. The fields $\K$ and $K^{\ab,\infty}$ play the role of~$\Q^{\ab}$ in the constructions of Jacob and Consani-Marcolli, respectively. The present paper emerged from our attempt to answer the following two natural questions. Is there an analogue of the Bost-Connes system for $K$ corresponding to the maximal abelian extension rather than to $\K$ or $K^{\ab,\infty}$? What is the exact relation between the groupoids of Jacob and Consani-Marcolli?

\smallskip

In Section~\ref{sphase} we define a C$^*$-dynamical system $(A_{L,S},\sigma)$ for every abelian extension $L$ of~$K$ and a finite set $S$ of primes in $K$, and, following the strategy in \cite{LLN}, show that its equilibrium states have the expected properties. The case $L=K^{\ab}$ and $S=\emptyset$ is our answer to the first question. The definition of  $(A_{L,S},\sigma)$ looks straightforward, if one compares it with the system of Ha and Paugam as presented in~\cite{LLN}. A ``new'' simple idea is to consider all possible abelian extensions instead of a fixed one and to allow removing a finite set of primes. This brings a lot of flexibility into the analysis of the systems and allows us to use the standard number-theoretic strategy: reduce a proof of a given property to the situation where the extension $L/K$ is finite and $S$ contains all primes in~$K$ that ramify in $L$, in which case the system $(A_{L,S},\sigma)$ has a very transparent structure.

\smallskip

In Section~\ref{stype} we compute the type of the unique equilibrium state of $(A_{L,S},\sigma)$ at inverse temperature $\beta=1/T$ for $0<\beta\le1$. The spectrum of the corresponding modular operator consists of powers of $q^{\beta}$, where $q$ is the number of elements in the constant field of $K$. A natural guess is, therefore, that the type is III$_{q^{-\beta}}$. This would indeed be the case, if we could show that the centralizer of the state is a factor. It turns out that this is the case if and only if the extension $L/K$ is geometric. In the general case the type is III$_{q^{-\beta n}}$, where $q^n$ is the number of elements in the constant field of~$L$. After a number of reductions the proof boils down to a computation of the asymptotic ratio set for a sequence of measures. This computation, in turn, relies on a version of the Chebotarev density theorem. When $n=+\infty$, the type is III$_0$, and we show that the corresponding flow of weights is a suitably scaled suspension flow of the translation by the Frobenius element on $\Gal(\bar \F_q/\F_q)$.

\smallskip

In the remaining part of the paper we show that the systems of Jacob and Consani-Marcolli fit into our framework, with $L=\K$ or $L=K^{\ab,\infty}$ and $S=\{\infty\}$. In case of the Consani-Marcolli groupoid this is quite clear from results in~\cite{CM}. But Jacob's system requires some work.

In Section~\ref{sdrinfeld} we summarize results on Drinfeld modules that are needed later. In Section~\ref{sjacob} we then prove that Jacob's system is isomorphic to our system corresponding to $L=\K$ and $S=\{\infty\}$. The isomorphism is non-canonical, but any two isomorphisms in the class we define differ by an element of $\Gal(\K/K)$ acting on one of the system. Our approach to Bost-Connes systems therefore provides, in the case  $L=\K$ and $S=\{\infty\}$, an alternative route to results of Jacob and corrects his statement about the types of the equilibrium states in the critical region.

Returning to our second motivating question, the isomorphism established in Section~\ref{sjacob} shows that the groupoid of Consani-Marcolli is non-canonically isomorphic to the quotient of Jacob's groupoid by the action of $\Gal(\K/K^{\ab,\infty})$. Furthermore, a small modification of the construction of Consani-Marcolli leads to a groupoid that is isomorphic to Jacob's groupoid.

\smallskip

In Appendix we collect some general results on the ratio set of an equivalence relation that we need for the computations in Section~\ref{stype}.

\bigskip

\section{Systems associated with abelian extensions} \label{sphase}

Let $K$ be a global function field with constant field $\F_q$. For every prime $\pp$ in $K$ denote by~$K_\pp$ the corresponding completion of $K$, and by $\OO_\pp\subset K_\pp$ its maximal compact subring. Denote by~$\D$ the group of divisors of~$K$, by $\ak$ the adele ring of $K$, and by $\oa\subset \ak$ its maximal compact subring. So $\D$ is the free abelian group with generators $\pp$, $\ak$ is the restricted product $\prod'K_\pp$ of the fields~$K_\pp$ with respect to $\OO_\pp\subset K_\pp$, and $\oa=\prod_\pp\OO_\pp$. For a finite set $S$ of primes in $K$ denote by $\D_S\subset\D$ the subgroup consisting of divisors with support in the complement $S^c$ of $S$. Similarly, define $\ak_S=\prod'_{\pp\in S^c}K_\pp$ and $\oas=\prod_{\pp\in S^c}\OO_\pp$.

\smallskip

Let $L/K$ be an abelian extension, finite or infinite, and $S$ be a finite, possibly empty, set of primes in~$K$.
Consider the space
$$
X_{L,S}=\Gal(L/K)\times_{\oas^*}\ak_S.
$$
Here the action of $\oas^*$ on $\Gal(L/K)$ is defined using the Artin map $r_{L/K}\colon\ak^*\to\Gal(L/K)$. Thus $X_{L,S}$ is the quotient of $\Gal(L/K)\times\ak_S$ by the action $g(x,y)=(xr_{L/K}(g)^{-1},gy)$ of $\oas^*$.
Identify~$\D_S$ with $\aks^*/\oas^*$. Then the diagonal action of $\aks^*$ on $\Gal(L/K)\times\aks$, given again by $g(x,y)=(xr_{L/K}(g)^{-1},gy)$, defines an action of $\D_S$ on $X_{L,S}$. Put $Y_{L,S}=\Gal(L/K)\times_{\oas^*}\oas\subset X_{L,S}$. Consider the C$^*$-algebra
$$
A_{L,S}=\chr_{Y_{L,S}}(C_0(X_{L,S})\rtimes \D_S)\chr_{Y_{L,S}}.
$$
We can also write $A_{L,S}$ as the semigroup crossed product $C(Y_{L,S})\rtimes\D_S^+$, where $\D_S^+\subset\D_S$ is the subsemigroup of effective divisors.

The action of $\Gal(L/K)$ by translations on itself defines an action of $\Gal(L/K)$ on $X_{L,S}$, which in turn defines an action on $A_{L,S}$.

Define a dynamics $\sigma$ on $A_{L,S}$. It is easier to explain its extension to the multiplier algebra of the whole crossed product $C_0(X_{L,S})\rtimes \D_S$, which we continue to denote by $\sigma$. We put $\sigma_t(f)=f$ for $f\in C_0(X_{L,S})$ and $\sigma_t(u_D)=N(D)^{it}u_D$ for $D\in\D_S$, where $N(D)=q^{\deg D}$ is the norm of $D$.

\smallskip

Recall that a KMS-state for $\sigma$ at inverse temperature
$\beta\in\R$, or a $\sigma$-KMS${}_\beta$-state,  is a
$\sigma$-invariant state~$\varphi$ such that $\varphi(ab) =
\varphi(b\sigma_{i\beta}(a))$ for $a$ and $b$ in a set of
$\sigma$-analytic elements with dense linear span. A $\sigma$-invariant state $\varphi$ is called a ground state, if the holomorphic function $z\mapsto\varphi(a\sigma_z(b))$ is bounded on
the upper half-plane for $a$ and $b$ in a set of $\sigma$-analytic
elements spanning a dense subspace. If a state $\varphi$ is a
weak$^*$ limit point of a sequence of states $\{\varphi_n\}_n$ such
that $\varphi_n$ is a $\sigma$-KMS$_{\beta_n}$-state and
$\beta_n\to+\infty$ as $n\to\infty$, then $\varphi$ is a ground
state. Such ground states are called
$\sigma$-KMS$_\infty$-states.

\begin{theorem} \label{tphase}
For the system $(A_{L,S},\sigma)$ we have:
\enu{i} for $\beta<0$ there are no KMS$_\beta$-states;
\enu{ii} for every $0< \beta \leq 1$ there is a
unique KMS$_\beta$-state; \enu{ii} for every  $1< \beta<\infty$ the
extremal KMS$_\beta$-states are indexed by the points of the subset $$Y^0_{L,S}=  \Gal(L/
K)\times_{\oas^*}\oas^* \cong \Gal(L / K)$$ of $Y_{L,S}$, with the state
corresponding to $w\in Y^0_{L,S}$ given by
\begin{equation} \label{eKMSext}
\varphi_{\beta,w} (\chr_{Y_{L,S}}fu_D\chr_{Y_{L,S}}) = \frac{\delta_{D,0}}{\zeta_{K,S}(\beta) }\sum_{D'\in
\D^+_S} N(D')^{-\beta} f(D'w),
\end{equation}
where $\zeta_{K,S}(\beta)=\sum_{D\in\D^+_S} N(D)^{-\beta}$; furthermore, every extremal KMS$_\beta$-state $\varphi_{\beta,w}$ is of type I$_\infty$ and its partition function is $\zeta_{K,S}(\beta)$;
\enu{iv}  the extremal ground states are indexed by~$Y^0_{L,S}$, with the state
corresponding to $w\in Y^0_{L,S}$ given by $\varphi_{\infty,w}(\chr_{Y_{L,S}}fu_D\chr_{Y_{L,S}})=\delta_{D,0}f(w)$, and
all ground states are KMS$_\infty$-states.
\end{theorem}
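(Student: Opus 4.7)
The plan is to follow the strategy of~\cite{LLN} and reduce all four claims to the case where $L/K$ is finite and $S$ contains every prime of $K$ ramifying in $L$. In that reduced situation the Artin map $r_{L/K}\colon\ak^*\to\Gal(L/K)$ is surjective, the space $X_{L,S}$ decomposes conveniently into pieces indexed by $\Gal(L/K)$, and $A_{L,S}=C(Y_{L,S})\rtimes\D_S^+$ is structurally close to a classical Bost--Connes algebra; the general case is then recovered by a projective-limit argument as $L$ grows and $S$ shrinks. Claim (i) is immediate: for any nonzero $D\in\D_S^+$ (which exists as soon as $S^c\ne\emptyset$), applying the KMS condition to $a=u_D^*$, $b=u_D$ gives $1=\varphi(u_D^*u_D)=N(D)^\beta\varphi(u_Du_D^*)\le N(D)^\beta$, forcing $\beta\ge 0$.

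For (ii) and (iii), combining the KMS condition on the pair $(a,b)=(f,u_D^*)$ with that on $(a,b)=(u_D,f)$ yields $(1-N(D)^{-\beta})\varphi(fu_D)=0$, so $\varphi(fu_D)=0$ for every nonzero $D\in\D_S^+$ and $\varphi$ is determined by its restriction to $C(Y_{L,S})$, a probability measure $\mu$ satisfying the scaling relation $\mu(DY_{L,S})=N(D)^{-\beta}$. When $\beta>1$ the Dirichlet series $\zeta_{K,S}(\beta)=\sum_{D\in\D_S^+} N(D)^{-\beta}$ converges, and $\mu$ decomposes uniquely as $\zeta_{K,S}(\beta)^{-1}\sum_{D}N(D)^{-\beta}D_*\nu$ for a probability measure $\nu$ on $Y^0_{L,S}$; extremality corresponds to $\nu=\delta_w$, yielding the formula~(\ref{eKMSext}), and the induced GNS representation is the obvious type~I${}_\infty$ representation on $\ell^2(\D_S^+)$ with partition function $\zeta_{K,S}(\beta)$. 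When $0<\beta\le 1$ the divergence of $\zeta_{K,S}(\beta)$ forbids any single $D$-translate of $Y^0_{L,S}$ from carrying positive mass, and the scaling relation then pins $\mu$ down as the canonical measure built from normalized Haar measure on $\oas$ paired with the uniform measure on $\Gal(L/K)$.

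For (iv), boundedness of $z\mapsto N(D)^{iz}\varphi(fu_D)$ on the upper half-plane forces $\varphi(fu_D)=0$ for every nonzero $D\in\D_S^+$, so any ground state $\varphi$ is determined by its restriction to $C(Y_{L,S})$; a short argument using that the action of $\D_S^+$ shrinks supports shows that the associated measure lives on $Y^0_{L,S}$, and convex decomposition yields the parameterization and the explicit formula. That every ground state is a KMS${}_\infty$-state follows by verifying directly that $\varphi_{\infty,w}$ is the weak$^*$-limit of $\varphi_{\beta,w}$ as $\beta\to+\infty$, which is clear from $\zeta_{K,S}(\beta)^{-1}\to 1$ and dominated-convergence control of the sum in~(\ref{eKMSext}). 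I expect the main obstacle to be uniqueness at the critical value $\beta=1$: termwise convergence is unavailable there, and one must use the divergence of $\zeta_{K,S}$ together with the explicit $\D_S^+$-action in the reduced case to rule out a singular component of $\mu$ supported outside the unit part; this is exactly where the flexibility of removing the finite set $S$ and first working with a finite $L/K$ pays off.
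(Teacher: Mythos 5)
Your overall architecture matches the paper's: reformulate KMS$_\beta$-states for $\beta\ne 0$ as measures on $X_{L,S}$ satisfying $\mu(Y_{L,S})=1$ and $\mu(D\,\cdot)=N(D)^{-\beta}\mu$, dispose of $\beta<0$ by $DY_{L,S}\subset Y_{L,S}$, use $Y^0_{L,S}$ as a fundamental domain for $\beta>1$, and reduce the critical region to finite $L/K$ with $S$ containing all ramified primes. Parts (i), (iii) and (iv) are essentially sound as you sketch them.

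The genuine gap is in the uniqueness for $0<\beta\le 1$. You assert that once no $\D_S^+$-translate of $Y^0_{L,S}$ carries positive mass, ``the scaling relation then pins $\mu$ down as the canonical measure.'' It does not: the action of $\D_S$ on $X_{L,S}=\Gal(L/K)\times\aks/\oas^*$ moves the Galois coordinate through the Artin map, so the scaling relation couples the two factors, and a priori $\mu$ could have a non-uniform marginal on $\Gal(L/K)$ correlated with the adelic coordinates. The paper's proof closes this by computing the projection $P$ in $L^2(Y_{L,S},d\mu)$ onto $\D_S^+$-invariant functions and showing it consists only of constants; for a function $f$ defined by a nontrivial character $\chi$ of $\Gal(L/K)$ one bounds $\|Pf\|_2$ by the Euler product $\prod_\pp\bigl|(1-N(\pp)^{-\beta})/(1-\chi(\pp)N(\pp)^{-\beta})\bigr|$, which tends to zero precisely because the Artin $L$-function $L(s,\chi)$ has no pole at $s=1$. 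This number-theoretic input is the heart of the uniqueness argument and is entirely absent from your proposal. Note also that your closing paragraph misidentifies the difficulty: the problem is not confined to $\beta=1$ (the same argument is needed for all $0<\beta\le1$, since $\zeta_{K,S}(\beta)$ diverges throughout), and it is not about a ``singular component supported outside the unit part'' but about controlling the Fourier modes of $\mu$ in the Galois direction, i.e.\ an ergodicity statement whose proof requires $L(1,\chi)\ne 0$.
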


\bp Since the proof is essentially identical to that of \cite[Theorem~2.1]{LLN}, we will only describe the main steps. Along the way we will make a few remarks that will be important later.

First, one shows that classifying KMS$_\beta$-states for $\beta\in\R\setminus\{0\}$ is equivalent to classifying measures~$\mu$ on $X_{L,S}$ such that $\mu(Y_{L,S})=1$ and $\mu(DZ)=N(D)^{-\beta}\mu(Z)$ for any Borel set $Z\subset X_{L,S}$ and any divisor $D\in\D_S$. Namely, the state corresponding to $\mu$ is given by the composition of the canonical conditional expectation $A_{L,S}\to C(Y_{L,S})$ with the state on $C(Y_{L,S})$ defined by the measure $\mu$. Since $DY_{L,S}\subset Y_{L,S}$ for $D\in\D^+_S$, there are no such measures for $\beta<0$.

Next, one checks that if $\mu$ is a measure defining a KMS$_\beta$-state and $\beta>1$, then $Y^0_{L,S}$ is a fundamental domain (modulo a set of measure zero) for the action of $\D_S$ on $(X_{L,S},\mu)$, so the measure is completely determined by its restriction to $Y^0_{L,S}$. The case $\beta=+\infty$ is dealt with in a similar manner.

The most interesting and non-trivial region is $0<\beta\le1$. Let us see that a measure $\mu_\beta$ defining a KMS$_\beta$-state indeed exists. We define $\mu_\beta$ as the image of the measure
$$
\lambda_{L/K}\times\prod_{\pp\in S^c}\mu_{\beta,\pp}
$$
on $\Gal(L/K)\times\aks$ under the quotient map $\Gal(L/K)\times\aks\to X_{L,S}$, where $\lambda_{L/K}$ is the normalized Haar measure on $\Gal(L/K)$ and $\mu_{\beta,\pp}$ is the measure on $K_\pp$ defined as follows. The measure $\mu_{1,\pp}$ is the Haar measure on $K_\pp$ such that $\mu_{1,\pp}(\OO_\pp)=1$. The measure $\mu_{\beta,\pp}$ is absolutely continuous with respect to $\mu_{1,\pp}$ and
$$
\frac{d\mu_{\beta,\pp}}{d\mu_{1,\pp}}(a)=\frac{1-N(\pp)^{-\beta}}{1-N(\pp)^{-1}}\|a\|^{\beta-1}_\pp.
$$

Assume now that $\mu$ is a measure defining a KMS$_\beta$-state for some $0<\beta\le1$. In order to see that $\mu=\mu_\beta$, it is convenient to make two reductions.

Given an intermediate finite extension $E$ of $K$ we can identify $X_{E,S}$ with $X_{L,S}/\Gal(L/E)$. Since any continuous function with support in $Y_{L,S}$ can be approximated by a $\Gal(L/E)$-invariant function with support in $Y_{L,S}$ for sufficiently large $E$, it follows that to prove that $\mu=\mu_\beta$ it suffices to consider the systems corresponding to finite extensions.

For the second reduction take a finite set $S'$ of primes in $K$ containing $S$. The subset
$$
X_{L,S,S'}=\Gal(L/K)\times_{\oas^*}\left(\prod_{\pp\in S'\setminus S}\OO^*_\pp\times\ak_{S'}\right)
$$
of $X_{L,S}$ is $\D_{S'}$-invariant, it is a fundamental domain (modulo a set of measure zero) for the action of the group generated by primes in $S'\setminus S$ on $(X_{L,S},\mu)$, and the measure of
$$
Y_{L,S,S'}=\Gal(L/K)\times_{\oas^*}\left(\prod_{\pp\in S'\setminus S}\OO^*_\pp\times\OO_{\A,S'}\right)
$$
is equal to $\prod_{\pp\in S'\setminus S}(1-N(\pp)^{-\beta})$. It follows that classifying measures $\mu$ on $X_{L,S}$ defining KMS$_\beta$-states is the same as classifying measures $\nu$ on $X_{L,S,S'}$ such that $\nu(Y_{L,S,S'})=\prod_{\pp\in S'\setminus S}(1-N(\pp)^{-\beta})$ and $\nu(DZ)=N(D)^{-\beta}\nu(Z)$ for any Borel set $Z\subset X_{L,S,S'}$ and any divisor $D\in\D_{S'}$. But $X_{L,S,S'}$ can be identified with $X_{L,S'}$. Therefore, if we have uniqueness of KMS$_\beta$-states for~$S'$, we have it also for $S$.

To summarize, in proving the uniqueness of KMS$_\beta$-states for $0<\beta\le1$ we may assume that the extension $L/K$ is finite and the set $S$ is as large as we want. In particular, we may assume that $S$ contains all primes in $K$ that ramify in $L$. In this case the kernel of the Artin map $\aks^*\to\Gal(L/K)$ contains $\oas^*$, so this map factors through $\D_S$,
$$
X_{L,S}=\Gal(L/K)\times\aks/\oas^*
$$
and the action of $\D_S$ on $X_{L,S}$ is diagonal. In order to prove that $\mu=\mu_\beta$ one then computes the projection~$P$ in $L^2(Y_{L,S},d\mu)$ onto the subspace of $\D^+_S$-invariant functions, which turns out to consist only of constants. Consider a continuous function on $Y_{L,S}$ that is supported on $Y_{L,S,S'}\cong Y_{L,S'}=\Gal(L/K)\times\OO_{\ak,S'}/\OO^*_{\ak,S'}$ for some $S'\supset S$ and whose value at $a\in Y_{L,S,S'}$ depends only on the coordinate of $a$ in~$\Gal(L/K)$. It can be shown that if $f$ is defined by a nontrivial character $\chi$ of $\Gal(L/K)$, then the $L^2$-norm of $Pf$ is not larger than
$$
\prod_{\pp\in {S'}^c}\left|\frac{1-N(\pp)^{-\beta}}{1-\chi(\pp)N(\pp)^{-\beta}}\right|.
$$
Since the Artin $L$-function $L(s,\chi)$ does not have a pole at $s=1$, the above product diverges to zero (for $0<\beta\le1$). Hence $Pf=0$, so that $\int fd\mu=0=\int fd\mu_\beta$. From this it is easy to deduce that $\mu=\mu_\beta$.
\ep

The case $\beta=0$ is special, as then there are KMS-states that do not factor through the conditional expectation $A_{L,S}\to C(Y_{L,S})$. In order to classify them, consider the subfield $L^{un}_S\subset L$ such that $\Gal(L/L^{un}_S)={r_{L/K}(\oas^*)}$. This is the maximal subextension of $L/K$ unramified at all primes in~$S^c$. Let $\D_{L,S}$ be the kernel of the Artin homomorphism $\D_S\to\Gal(L^{un}_S/K)$, and $\D_{L,S}^0\subset \D_{L,S}$ be the subgroup of divisors of degree zero. Denote by $\mu_0$ the unique $\Gal(L/K)$-invariant measure on~$Y_{L,S}$ concentrated on the image of $\Gal(L/K)\times\{0\}\subset \Gal(L/K)\times\oas$ in $Y_{L,S}$.

\begin{proposition}
There is a one-to-one correspondence between extremal KMS$_0$-states on $A_{L,S}$ and characters of $\D^0_{L,S}$. Namely, the state $\tau_\chi$ corresponding to a character $\chi$ is given by
$$
\tau_{\chi} (\chr_{Y_{L,S}}fu_D\chr_{Y_{L,S}}) =\begin{cases}\chi(D)\int fd\mu_0,&\text{if}\ \ D\in \D^0_{L,S},\\0,&\text{otherwise}.\end{cases}
$$
\end{proposition}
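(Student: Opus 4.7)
The plan is to exploit that a KMS$_0$-state $\tau$ on $A_{L,S}$ is precisely a $\sigma$-invariant tracial state. First, since
\[ \sigma_t(\chr_{Y_{L,S}} f u_D \chr_{Y_{L,S}}) = N(D)^{it}\,\chr_{Y_{L,S}} f u_D \chr_{Y_{L,S}}, \]
$\sigma$-invariance forces $\tau$ to vanish on such elements whenever $\deg D \neq 0$. Applying the trace property to the isometries $v_D := \chr_{Y_{L,S}}u_D\chr_{Y_{L,S}}$ for $D \in \D_S^+$, the identities $v_D^* v_D = \chr_{Y_{L,S}}$ and $v_D v_D^* = \chr_{D\cdot Y_{L,S}}$ give $\mu(D\cdot Y_{L,S}) = 1$ for the probability measure $\mu := \tau|_{C(Y_{L,S})}$. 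Intersecting over $D \in \D_S^+$, $\mu$ is concentrated on the image of $\Gal(L/K)\times\{0\}$ in $Y_{L,S}$, which we identify with $\Gal(L^{un}_S/K)$ via the quotient by $\Gal(L/L^{un}_S) = r_{L/K}(\oas^*)$; on this image the $\D_S$-action factors through $r_{L^{un}_S/K}\colon \D_S\to\Gal(L^{un}_S/K)$ and is translation.

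The trace identity $\tau(f v_D) = \tau(v_D f) = \tau((f\circ r_{L^{un}_S/K}(D)^{-1})\, v_D)$ for $D\in\D_S^+$, combined with the Chebotarev density theorem applied to finite subextensions (as in the reductions used in the proof of Theorem~\ref{tphase}), forces $\mu = \mu_0$. The same idea applied to the functionals $\ell_D(f) := \tau(\chr_{Y_{L,S}}f u_D\chr_{Y_{L,S}})$ for arbitrary $D\in\D_S$---using the trace manipulation $\tau(u_{D_1}h u_{D_2}) = \tau(h u_{D_1+D_2}) = \tau((h\circ r_{L^{un}_S/K}(D_1)^{-1})\, u_{D_1+D_2})$ valid for any decomposition $D=D_1+D_2$ with $D_1,D_2\in\D_S$---gives $\ell_D = c(D)\mu_0$ for some scalar $c(D)\in\C$.

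To show $c(D) = 0$ unless $D\in\D^0_{L,S}$, I would observe that $\D_S/\D^0_{L,S}$ embeds into $\Gal(L^{un}_S/K)$ via $r_{L^{un}_S/K}$ and acts freely, with dense image by Chebotarev, by translation on the support of $\mu_0$. The associated crossed product $C(\Gal(L^{un}_S/K))\rtimes(\D_S/\D^0_{L,S})$ thus has a unique trace---the canonical one given by $\mu_0$---which vanishes on $u_{[D]}$ for nontrivial classes $[D]$; hence $c(D) = 0$ for $D\notin\D^0_{L,S}$. For $D\in\D^0_{L,S}$, $u_D$ commutes with $C(\Gal(L^{un}_S/K))$; positivity ($|c(D)|\leq 1$, $c(0)=1$) together with multiplicativity $c(D_1+D_2)=c(D_1)c(D_2)$ on $\D^0_{L,S}$ then give $c|_{\D^0_{L,S}} = \chi$ for a unique character $\chi$. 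Conversely, given any $\chi$, the stated formula defines a $\sigma$-invariant trace with positivity visible from
\[ \tau_\chi(a^*a) = \int \Bigl|\sum_i f_i(y)\chi(D_i)\Bigr|^2\,d\mu_0(y) \geq 0 \]
for $a = \sum_i \chr_{Y_{L,S}} f_i u_{D_i}\chr_{Y_{L,S}}$ with $D_i\in\D^0_{L,S}$; distinct characters give distinct states, and extremality follows since a convex combination of unit-modulus characters having unit-modulus average must be trivial. The main technical obstacle is rigorously lifting the free-action vanishing of traces from the quotient crossed product to the corner algebra $A_{L,S}$, given its partial-isometry structure.
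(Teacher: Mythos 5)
Your overall strategy---working directly with $\sigma$-invariant traces instead of invoking the groupoid classification theorem the paper cites from~\cite{N3}---is viable, and your first half (concentration of $\mu$ on the image $Z$ of $\Gal(L/K)\times\{0\}$, identification of $Z$ with $\Gal(L^{un}_S/K)$, and $\mu=\mu_0$ by density of the Artin image of $\D_S$) matches the paper's reasoning. But two steps have genuine problems. First, the claim that $\D_S/\D^0_{L,S}$ embeds into $\Gal(L^{un}_S/K)$ and acts freely is false in general: the kernel of $\D_S/\D^0_{L,S}\to\Gal(L^{un}_S/K)$ is $\D_{L,S}/\D^0_{L,S}$, which is infinite cyclic whenever $\D_{L,S}$ contains a divisor of nonzero degree (for $L=K$ the crossed product you invoke is just $C^*(\D_S/\D^0_S)\cong C(\T)$, which has many traces). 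The conclusion you need survives because the offending classes are exactly those of nonzero degree, already killed by $\sigma$-invariance; for $D\in\D^0_S\setminus\D^0_{L,S}$ the image of $D$ in $\Gal(L^{un}_S/K)$ is a nontrivial group element, translation by which is automatically free, and a matrix-unit argument in a finite quotient (where $C(\Z/m\Z)\rtimes\Z\cong M_m(C(\T))$ and $u_D$ has zero diagonal) gives $c(D)=0$. The corner/partial-isometry obstacle you flag is handled by Cauchy--Schwarz: $|\ell_D(f)|^2\le\int|f|^2\,d\mu_0$, so $\ell_D$ is absolutely continuous with respect to $\mu_0$ and everything localizes to $Z$, where $DZ=Z$ and the $v_D$ act as unitaries.

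Second, the multiplicativity $c(D_1+D_2)=c(D_1)c(D_2)$ on $\D^0_{L,S}$ is asserted without proof, and it fails for a general KMS$_0$-state: a priori $c|_{\D^0_{L,S}}$ is only a normalized positive-definite function, i.e.\ it corresponds to a state on $C^*(\D^0_{L,S})$, and this is precisely where extremality must enter (the paper's correspondence is between all KMS$_0$-states and states on $C^*(\D^0_{L,S})$, with characters arising only for extremal ones). You need to show that $C^*(\D^0_{L,S})$ maps into the center of the GNS von Neumann algebra (its image commutes with $C(Z)$ and with all $u_{D'}$ on the support of $\mu_0$) and that an extremal trace restricts to a character there; conversely a non-character positive-definite function yields a non-extremal state. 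Relatedly, your positivity check for $\tau_\chi$ only treats elements with all $D_i\in\D^0_{L,S}$; for general $a=\sum_i\chr_{Y_{L,S}}f_iu_{D_i}\chr_{Y_{L,S}}$ you should extend $\chi$ to a character of $\D_S$ and check that cross terms between distinct cosets of $\D^0_{L,S}$ vanish. With these repairs your argument goes through and gives an elementary alternative to the paper's appeal to the general classification of KMS-states.
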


\bp We will use a general classification result from~\cite{N3}, although the present case is not difficult to deal with directly. By \cite[Theorem~1.3]{N3} there is a one-to-one correspondence between KMS$_0$-states on $A_{L,S}$ and pairs $(\mu,\{\tau_x\}_{x})$ consisting of a $\D_S$-invariant measure $\mu$ on $X_{L,S}$ with $\mu(Y_{L,S})=1$ and a $\mu$-measurable field of states $\tau_x$ on $C^*(G_x)$, where $G_x\subset \D_S$ is the stabilizer of $x$, such that $\tau_{Dx}=\tau_x$ for $\mu$-a.e.~$x$ and $\tau_x$ factors through the canonical conditional expectation $C^*(G_x)\to C^*(G_x^0)$, where $G^0_x\subset G_x$ is the subgroup of divisors of degree zero. Assume $(\mu,\{\tau_x\}_{x})$ is such a pair. Since the intersection of the sets $DY_{L,S}$, $D\in\D_S$, coincides with the image $Z$ of $\Gal(L/K)\times\{0\}$ in $Y_{L,S}$, the measure $\mu$ is concentrated on $Z$. The set $Z$ can be identified with $\Gal(L/K)/{r_{L/K}(\oas^*)}=\Gal(L^{un}_S/K)$. Since the image of $\D_S$ in $\Gal(L^{un}_S/K)$ is dense, the Haar measure on $\Gal(L^{un}_S/K)$ is the unique $\D_S$-invariant measure. Therefore $\mu=\mu_0$ and the action of $\D_S$ on $(X_{L,S},\mu)$ is ergodic. It follows that the field $\{\tau_x\}_{x}$ is essentially constant. The stabilizer of every point in $Z$ is $\D_{L,S}$. Therefore we conclude that there is a one-to-one correspondence between KMS$_0$-states on $A_{L,S}$ and states on~$C^*(\D^0_{L,S})$. Then extremal KMS$_0$-states correspond to characters of $\D^0_{L,S}$.
\ep

\bigskip
\section{Types of KMS-states in the critical region}\label{stype}

We continue to use the notation of the previous section, so $L/K$ is an abelian extension of a global function field $K$ and $S$ is a finite set of primes in $K$. Take $0<\beta\le1$ and denote by $\varphi_\beta$ the unique $\sigma$-KMS$_\beta$-state on $A_{L,S}$.

\begin{theorem} \label{ttype}
Let $\F_{q^n}$, $n\in\N\cup\{+\infty\}$, be the algebraic closure of the constant field $\F_q\subset K$ in $L$. Then the von Neumann algebra $\pi_{\varphi_\beta}(A_{L,S})''$ is an injective factor of type III$_{q^{-\beta n}}$.
\end{theorem}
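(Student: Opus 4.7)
The plan is to follow the reduction strategy of Theorem~\ref{tphase}. First, I would reduce to a finite subextension $E/K$ of $L/K$, with $S$ enlarged to contain all primes ramified in $E$; this replaces $A_{L,S}$ by a corner of $A_{E,S'}$ cut by a projection of finite trace, leaving the type unchanged, and the case $n=+\infty$ is handled as a direct limit of the finite-$n$ cases. In the reduced set-up the Artin map factors through $\D_S$ and $X_{L,S}\cong \Gal(L/K)\times(\aks/\oas^*)$ with $\D_S$ acting diagonally, so $\pi_{\varphi_\beta}(A_{L,S})''$ is the von Neumann algebra of the principal measured equivalence relation $R$ generated by the partial $\D_S$-action on $(Y_{L,S},\mu_\beta)$, equipped with the modular Radon--Nikodym cocycle $c(y,Dy)=N(D)^{-\beta}=q^{-\beta\deg D}$. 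Injectivity of the resulting factor is automatic since $\D_S$ is amenable, and ergodicity (hence factoriality) follows from the same Artin $L$-function argument used to prove uniqueness in Theorem~\ref{tphase}.

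The type is then determined by the asymptotic ratio set $r_\infty(R,\mu_\beta)$, which I would compute using the criteria collected in the Appendix. The key observation is that for primes $\pp_1,\pp_2\in S^c$ with $r_{L/K}(\pp_1)=r_{L/K}(\pp_2)$, the fractional divisor $\pp_1\pp_2^{-1}$ acts trivially on the $\Gal(L/K)$-coordinate of $Y_{L,S}$, so it yields a partial equivalence of positive measure with cocycle value $q^{-\beta(\deg\pp_1-\deg\pp_2)}$. The restriction map $\Gal(L/K)\to\Gal(\F_{q^n}/\F_q)$ sends $r_{L/K}(\pp)$ to $\mathrm{Frob}^{\deg\pp}$, so $r_{L/K}(\pp_1)=r_{L/K}(\pp_2)$ forces $\deg\pp_1\equiv\deg\pp_2\pmod n$. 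Combined with the fact that every element of $R$ is a composition of prime actions, this gives the inclusion $r_\infty\subseteq\{0\}\cup q^{-\beta n\Z}$.

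For the reverse inclusion I would invoke Chebotarev's density theorem for the finite abelian extension $L/K$: each class in $\Gal(L/K)$ contains infinitely many primes, and restriction to primes of degree in a fixed residue class mod $n$ preserves positive density. Hence for every $k\in\Z$ one produces infinitely many pairs $(\pp_1,\pp_2)$ in the same Artin class with $\deg\pp_1-\deg\pp_2=nk$, giving enough partial isomorphisms in $R$ to realize $q^{-\beta nk}$ as an asymptotic ratio on arbitrarily small Borel sets of positive measure. This yields $r_\infty=\{0\}\cup q^{-\beta n\Z}$ for finite $n$, i.e.\ type III$_{q^{-\beta n}}$; for $n=+\infty$ only the degree shift $0$ is admissible, so $r_\infty=\{0,1\}$ and the type is III$_0$. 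The main technical step, where I expect the real work, is translating the Chebotarev input into the Appendix's asymptotic-ratio-set criterion: one must combine the product-measure structure of $\mu_\beta$ on $\Gal(L/K)\times\oas/\oas^*$ with a sufficiently effective Chebotarev count to produce partial isomorphisms that cover arbitrary positive-measure subsets of $Y_{L,S}$.
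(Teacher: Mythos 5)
Your overall strategy --- reduce to a finite subextension with $S$ enlarged past the ramified primes, bound the ratio set above via the degree-divisibility of the Artin kernel, and bound it below via Chebotarev and the asymptotic ratio set --- is exactly the paper's. But there are two genuine gaps. The first is the case $n=+\infty$. Passing to finite subextensions $E/K$ only controls the ratio set through an intersection (Proposition~\ref{psequence}): the nonzero part of $r(\RR,\mu_\beta)$ is $\bigcap_m q^{-\beta m\Z}=\{1\}$, which leaves two possibilities, $r=\{1\}$ (semifinite) and $r=\{0,1\}$ (type III$_0$). Your assertion that ``only the degree shift $0$ is admissible, so $r_\infty=\{0,1\}$'' does not distinguish them: nothing in your argument puts $0$ in the ratio set. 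Note also that $M_E=M_L^{\Gal(L/E)}$ is a \emph{subfactor} of $M_L$, not a corner or quotient, so ``direct limit of the finite-$n$ cases'' does not transfer the type upward. The paper closes this with a separate argument: there is a normal conditional expectation $M_L\to M_K=M_L^{\Gal(L/K)}$, and since $M_K$ is of type III (namely III$_{q^{-\beta}}$, by the already-settled case $L=K$), $M_L$ cannot be semifinite, hence is of type III$_0$.

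The second gap is that the lower bound $q^{-\beta n}\in r$ --- the point where the theorem is actually proved and where the hypothesis $\beta\le1$ enters --- is deferred rather than executed. Qualitative Chebotarev (``each class contains infinitely many primes'') is not enough, because the asymptotic-ratio-set criterion requires a divergence condition. Concretely, the paper restricts to the sub-product over the primes $\pp\in S^c\cap\Spl(L/K)$ (all of degree divisible by $n$), pairs each prime of degree $2kn$ injectively with one of degree $(2k+1)n$ --- possible because the effective count $m_k\sim q^{kn}/(k[L:K])$ gives $m_{2k}<m_{2k+1}$ for large $k$ --- and then verifies $\sum_k q^{-2\beta kn}m_{2k}=\infty$, which holds precisely because $m_{2k}\sim q^{2kn}/(2k[L:K])$ and $\beta\le1$. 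Without this quantitative input the partial isomorphisms cannot be produced. Two smaller points: once $q^{-\beta n}$ is in the ratio set, all of $q^{-\beta n\Z}$ follows for free since the nonzero part of the ratio set is a closed subgroup of $\R^*_+$, so there is no need to realize every $q^{-\beta nk}$ separately; and the passage from the split-prime sub-product back to the full space $\aks/\oas^*$ (your ``cover arbitrary positive-measure subsets'') is handled by Proposition~\ref{pproduct} together with ergodicity, not by constructing partial isomorphisms over arbitrary sets by hand.
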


Since $\varphi_\beta$ is extremal, the von Neumann algebra $\pi_{\varphi_\beta}(A_{L,S})''$ is a factor. It is the reduction of the von Neumann algebra
$L^\infty(X_{L,S},\mu_\beta)\rtimes\D_S$ by the projection $\chr_{Y_{L,S}}$, where $\mu_\beta$ is the measure on $X_{L,S}$ defined in Section~\ref{sphase}.
Since $\chr_{Y_{L,S}}$ is a full projection in $A_{L,S}$, the von Neumann algebra $L^\infty(X_{L,S},\mu_\beta)\rtimes\D_S$ is also a factor, hence the action of $\D_S$ on $(X_{L,S},\mu_\beta)$ is ergodic. The equivalent formulation of the above theorem is therefore that this action (more precisely, the corresponding orbit equivalence relation) is of type III$_{q^{-\beta n}}$, see Appendix~\ref{a1}. Note that the injectivity of the factor $L^\infty(X_{L,S},\mu_\beta)\rtimes \D_S$ is obvious, since $\D_S$ is abelian.

\smallskip

Our computation of the ratio set will rely on the following version of the Chebotarev density theorem for function fields, see \cite[Proposition~6.4.8]{FJ}.

\begin{theorem}
Let $K$ be a function field with constant field $\F_q$, $L$ a finite Galois extension of $K$ with constant field $\F_{q^n}$, $\CC$ a conjugacy class in $\Gal(L/K)$ consisting of $c$ elements. Let $0\le a<n$ be such that the restriction of every element in $\CC$ to $\F_{q^n}$ is the $a$-th power of the Frobenius automorphism of~$\F_{q^n}/\F_q$. Then every prime $\pp$ such that $(\pp,L/K)=\CC$ has the property $\deg\pp\equiv a\mod n$, and
$$
\#\{\pp\mid (\pp,L/K)=\CC\ \text{and}\ \deg\pp=kn+a\}=\frac{cn}{(kn+a)[L:K]}q^{kn+a}+O(q^{kn/2})\ \ \text{as}\ \ k\to+\infty.
$$
\end{theorem}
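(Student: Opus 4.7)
The first part of the theorem is immediate from the definition of the Artin symbol: for any $\sigma\in\CC$ the action of $\sigma$ on the residue field of a prime of $L$ above $\pp$ is raising to the $q^{\deg\pp}$-th power, so its restriction to $\F_{q^n}$ is the $\deg\pp$-th iterate of the Frobenius; combined with the hypothesis that this restriction equals $\mathrm{Frob}^a$ and the fact that $\mathrm{Frob}$ has order $n$ on $\F_{q^n}$, one obtains $\deg\pp\equiv a\pmod n$.

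For the counting I would run the standard character-theoretic argument. Writing $G=\Gal(L/K)$, I would apply the character orthogonality relation
\[
\chr_{\CC}(\sigma)=\frac{c}{[L:K]}\sum_{\chi}\overline{\chi(\CC)}\,\chi(\sigma),
\]
the sum ranging over the irreducible characters of $G$, so that the count in question becomes $\frac{c}{[L:K]}\sum_\chi\overline{\chi(\CC)}\,S_m(\chi)$ with $S_m(\chi)=\sum_{\deg\pp=m}\chi(\mathrm{Frob}_\pp)$ and $m=kn+a$. Each $S_m(\chi)$ can be extracted from the coefficients of the Artin $L$-function $L(s,\chi)$ expanded in $u=q^{-s}$: the terms arising from $\mathrm{Frob}_\pp^{k}$ with $k\ge 2$ involve only primes of degree at most $m/2$ and therefore contribute an error of size $O(q^{m/2})$.

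To control the relevant coefficients of $\log L(s,\chi)$ themselves, I would split the irreducible characters of $G$ into three groups. The trivial character yields $L(s,\chi)=\zeta_K(s)=P_K(u)/\bigl((1-u)(1-qu)\bigr)$, producing the main term $q^m/m$ (from the pole at $u=1/q$) with an error $O(q^{m/2})$ (from the $2g_K$ zeros of $P_K$, which by Weil's Riemann hypothesis for curves lie on $|u|=q^{-1/2}$). Each of the other $n-1$ one-dimensional characters factoring through $\Gal(\F_{q^n}K/K)\cong\Z/n\Z$, say corresponding to a non-trivial $n$-th root of unity $\omega_\chi$, has $L(s,\chi)$ equal to $\zeta_K(s)$ under the substitution $u\mapsto\omega_\chi u$, so $S_m(\chi)=\omega_\chi^m q^m/m+O(q^{m/2})$. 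For every remaining irreducible character $\chi$, $L(s,\chi)$ is a polynomial in $u$ with inverse roots of absolute value $\sqrt q$ (again by Weil, with Brauer induction handling the non-abelian case), and therefore $S_m(\chi)=O(q^{m/2})$.

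Assembling the pieces, the third group of characters contributes only $O(q^{m/2})$. For each character in the first two groups, $\chi(\CC)=\omega_\chi^a$, since $\CC$ projects onto $\mathrm{Frob}^a$ in $G/\Gal(L/\F_{q^n}K)\cong\Z/n\Z$; summing over the $n$ relevant characters yields
\[
\frac{c}{[L:K]}\cdot\frac{q^m}{m}\sum_{\omega^n=1}\omega^{m-a}=\frac{cn\,q^m}{m[L:K]},
\]
the inner sum equalling $n$ because $m\equiv a\pmod n$. Since $q^{m/2}=q^{(kn+a)/2}=O(q^{kn/2})$ for $0\le a<n$, the stated asymptotic follows. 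The main obstacle is the Weil-type polynomial bound on the ``generic'' $L$-functions; in the function-field setting it is supplied by Weil's Riemann hypothesis for curves, combined with Brauer induction when $L/K$ is non-abelian.
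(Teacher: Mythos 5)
The paper does not prove this theorem at all: it is quoted directly from \cite[Proposition~6.4.8]{FJ}, so there is no internal proof to compare yours against. Your argument is the standard analytic proof via Artin $L$-functions, and its skeleton is correct: the congruence $\deg\pp\equiv a\bmod n$ follows as you say from the action of $\mathrm{Frob}_\pp$ on residue fields; the orthogonality decomposition with weight $c/[L:K]$ is right; the $n$ ``constant-field'' characters you isolate (those factoring through $\Gal(\F_{q^n}K/K)\cong\Z/n\Z$, which by Clifford theory are exactly the irreducible characters whose restriction to $\Gal(L/\F_{q^n}K)$ contains the trivial character) each contribute $\omega_\chi^{m-a}q^m/m+O(q^{m/2})$, and the sum $\sum_{\omega^n=1}\omega^{m-a}=n$ because $n\mid m-a$, yielding the stated main term; the $k\ge2$ terms in $\log L$ and the finitely many ramified primes are absorbed into $O(q^{m/2})=O(q^{kn/2})$. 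For contrast, the Fried--Jarden proof the paper relies on is of a different flavour: it dispenses with $L$-functions and character sums and instead reduces to counting rational points on the curve attached to $L$ (twisted by an element of $\CC$), applying the Riemann hypothesis bound $|N-(q+1)|\le 2g\sqrt q$ directly; your route is the one found in Rosen or Murty--Scherk, and buys a cleaner view of where each character's contribution lands.

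The one step where your justification is thinner than the claim it supports is the assertion that for every remaining irreducible $\chi$ the function $L(s,\chi)$ is a polynomial in $u$ with inverse roots of modulus $\sqrt q$, ``with Brauer induction handling the non-abelian case.'' Brauer induction only expresses $L(s,\chi)$ as a quotient of abelian $L$-functions over intermediate fields, hence gives meromorphy with zeros and poles confined to the circles $|u|=1$, $|u|=q^{-1/2}$, $|u|=q^{-1}$; it does not by itself exclude zeros or poles on the outer circles, and an uncancelled pole at $|u|=q^{-1}$ would contaminate the main term. What is actually needed is Artin's conjecture for function fields --- polynomiality of $L(s,\chi)$ for every irreducible $\chi$ not factoring through the constant-field quotient --- which is indeed a theorem of Weil, but proved via the cohomological (or divisor-class) interpretation of these $L$-functions rather than by Brauer induction. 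Once that input is invoked correctly, your proof is complete.
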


We are interested in the case when the extension $L/K$ is abelian. Denote by $\Spl(L/K)$ the set of primes in $K$ that are completely split in $L$.

\begin{corollary} \label{cdistrib}
Assume $L/K$ is a finite abelian extension and $S$ contains the set of primes in $K$ that ramify in $L$. Then the degree of every element in the kernel of the Artin map $\D_S\to\Gal(L/K)$ is divisible by $n$, and
$$
\#\{\pp\mid\pp\in \Spl(L/K)\ \text{and}\ \deg\pp=kn\}=\frac{1}{k[L:K]}q^{kn}+O(q^{kn/2})\ \ \text{as}\ \ k\to+\infty.
$$
\end{corollary}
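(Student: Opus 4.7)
The plan is to derive both statements as almost immediate consequences of the Chebotarev theorem just stated, together with one standard fact relating the Artin symbol to the Frobenius on the constant field.

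First I would recall that for a prime $\pp\in S^c$ the Artin symbol $(\pp,L/K)\in\Gal(L/K)$ is characterized by the property that its restriction to any finite residue field extension acts by raising to the power $N(\pp)=q^{\deg\pp}$. In particular, restricted to the constant field $\F_{q^n}\subset L$ it equals the $(\deg\pp)$-th power of the Frobenius automorphism $\mathrm{Frob}_q$ of $\F_{q^n}/\F_q$. Extending by additivity, for any divisor $D=\sum n_\pp\pp\in\D_S$ the Artin symbol of $D$ restricts to $\F_{q^n}$ as $\mathrm{Frob}_q^{\deg D}$. Hence, if $D$ lies in the kernel of the Artin map, then $\mathrm{Frob}_q^{\deg D}$ is trivial on $\F_{q^n}$, which forces $n\mid\deg D$. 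This establishes the first assertion.

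For the counting statement, observe that $\pp\in\Spl(L/K)$ is, by definition, equivalent to $(\pp,L/K)=\{\mathrm{id}\}$. Thus we apply the preceding theorem with $\CC=\{\mathrm{id}\}$, so $c=1$; since the identity restricts to $\F_{q^n}$ as $\mathrm{Frob}_q^0$, we have $a=0$. The theorem then asserts that every $\pp\in\Spl(L/K)$ has $\deg\pp\equiv 0\pmod n$, and that
$$
\#\{\pp\mid\pp\in\Spl(L/K)\ \text{and}\ \deg\pp=kn\}=\frac{n}{kn\,[L:K]}q^{kn}+O(q^{kn/2}),
$$
which simplifies to the asserted estimate as $k\to+\infty$.

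There is really no serious obstacle here; the only subtlety worth flagging is to make sure the hypothesis of the cited Chebotarev theorem is met, namely that $L/K$ is Galois with constant field exactly $\F_{q^n}$ and that $\CC$ is a full conjugacy class — both are automatic in our abelian setting with $\CC=\{\mathrm{id}\}$. The assumption that $S$ contains all primes ramifying in $L$ ensures that the Artin map is defined on all of $\D_S$ and that primes in $S^c$ with trivial Frobenius are precisely the completely split primes, so no separate bookkeeping for ramified primes is required.
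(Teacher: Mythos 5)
Your derivation is correct and is exactly the intended one: the paper states this as a corollary with no written proof, treating both parts as immediate consequences of the quoted Chebotarev theorem together with the fact (used explicitly later in the paper) that the Artin symbol of $D$ restricted to the constant field is $\mathrm{Frob}_q^{\deg D}$. Your specialization $\CC=\{\mathrm{id}\}$, $c=1$, $a=0$ and the remark about ramified primes being harmless are all that is needed.
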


Under the assumptions of the previous corollary consider the restricted product $X'_{L,S}$ of the spaces~$K_\pp/\OO_\pp^*$ with respect to $\OO_\pp/\OO_\pp^*$ over all $\pp\in S^c\cap\Spl(L/K)$, and consider the measure $\mu'_\beta=\prod_{\pp\in S^c\cap\Spl(L/K)}\mu_{\beta,\pp}$ on $X'_{L,S}$. Denote by $\D'_S$ the subgroup of $\D_S$ generated by primes in $S^c\cap\Spl(L/K)$.

\begin{lemma} \label{ltype}
The action of $\D'_S$ on $(X'_{L,S},\mu'_\beta)$ is of type III$_{q^{-\beta n}}$.
\end{lemma}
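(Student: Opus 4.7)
The plan is to compute the ratio set $r(R)$ of the $\D'_S$-orbit equivalence relation $R$ on $(X'_{L,S}, \mu'_\beta)$, invoking the correspondence between $r(R)$ and the type of the associated factor recorded in Appendix~\ref{a1}. The action of $D = \sum_\pp n_\pp \pp \in \D'_S$ multiplies the $\pp$-th coordinate by $\pi_\pp^{n_\pp}$ (for a chosen uniformizer $\pi_\pp$), and since $\mu_{\beta,\pp}(\pi_\pp Z) = N(\pp)^{-\beta}\mu_{\beta,\pp}(Z)$, the Radon--Nikodym cocycle of $D$ is the constant $q^{-\beta \deg D}$. By Corollary~\ref{cdistrib}, every prime in $S^c \cap \Spl(L/K)$ has degree divisible by $n$, so $\deg D \in n\Z$ for every $D \in \D'_S$, which immediately gives the inclusion $r(R) \subseteq \{0\} \cup \{q^{-\beta nk} : k \in \Z\}$.

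For the reverse inclusion, the plan is to show that $q^{-\beta n}$ belongs to $r(R)$; because $r(R) \cap (0,\infty)$ is a subgroup of $(0,\infty)$ and $r(R)$ is closed in $[0,\infty)$, the opposite inclusion $r(R) \supseteq \{0\} \cup q^{-\beta n\Z}$ then follows automatically. Given a measurable $A \subset X'_{L,S}$ of positive finite measure and $\varepsilon > 0$, first approximate $A$ by a cylinder set $A' = A'_F \times \prod_{\pp \notin F}\OO_\pp/\OO_\pp^*$ indexed by a finite set $F \subset S^c \cap \Spl(L/K)$, so that $\mu'_\beta(A \triangle A') < \varepsilon$. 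By Corollary~\ref{cdistrib}, for all sufficiently large $k$ one can choose primes $\pp_1, \pp_2 \in (S^c \cap \Spl(L/K)) \setminus F$ of degrees $(k+1)n$ and $kn$ respectively; then $D = \pp_1 - \pp_2 \in \D'_S$ has $\deg D = n$, so the cocycle of $D$ is the constant $q^{-\beta n}$. A direct product-measure computation yields $\mu'_\beta(A' \cap D^{-1}A') = \mu'_\beta(A')\, q^{-\beta kn}$, the only binding constraint being $v_{\pp_2}(x_{\pp_2}) \geq 1$. Using $\mu'_\beta(D^{-1}Z) = q^{\beta n}\mu'_\beta(Z)$ to bound $\mu'_\beta(D^{-1}(A \triangle A'))$ and choosing $\varepsilon$ small enough, the set $A \cap D^{-1}A$ still has positive measure, producing a positive-measure subset of $A$ sent into $A$ by $D$ with cocycle exactly $q^{-\beta n}$.

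The main technical point is precisely this balance of parameters. Corollary~\ref{cdistrib} guarantees split primes of degrees $kn$ and $(k+1)n$ outside the finite set $F$ only for $k$ sufficiently large, whereas the overlap $\mu'_\beta(A')q^{-\beta kn}$ shrinks exponentially with $k$; one must therefore fix a workable $k$ first and only then shrink $\varepsilon$, which is always possible because cylinder approximations of $A$ can be made arbitrarily fine. This flexibility, together with the asymptotic abundance of split primes provided by Corollary~\ref{cdistrib}, is what makes the argument go through. Combining the two inclusions gives $r(R) = \{0\} \cup q^{-\beta n\Z}$, and hence the action is of type III${}_{q^{-\beta n}}$.
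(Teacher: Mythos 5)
Your upper bound on the ratio set is correct and coincides with the paper's first step, and the computation $\mu'_\beta(A'\cap D^{-1}A')=\mu'_\beta(A')q^{-\beta kn}$ is fine. The lower bound, however, has a genuine gap in the balance of parameters that you yourself flag but do not resolve. The finite set $F$ on which the cylinder approximation $A'$ lives is dictated by $\varepsilon$ (finer approximations of a general positive-measure set $A$ require larger $F$), the primes $\pp_1,\pp_2$ must avoid $F$, and Corollary~\ref{cdistrib} supplies such primes only for $k$ large relative to $F$; so $k$ is forced to grow as $\varepsilon$ shrinks, and the required inequality $\mu'_\beta(A')q^{-\beta kn}>(1+q^{\beta n})\varepsilon$ may never become achievable. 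You cannot ``fix a workable $k$ first and then shrink $\varepsilon$,'' because whether a given $k$ is workable depends on $F$, which depends on $\varepsilon$: the dependence is circular. A telltale sign that an essential ingredient is missing is that your argument uses neither the asymptotic count in Corollary~\ref{cdistrib} (only the bare existence of split primes of degrees $kn$ and $(k+1)n$) nor the hypothesis $0<\beta\le 1$; yet the statement is false for $\beta>1$, since then $\prod_\pp(1-N(\pp)^{-\beta})>0$, so the set $\prod_{\pp}\OO_\pp^*/\OO_\pp^*$ has positive measure and meets each $\D'_S$-orbit in at most one point, and the relation is not of type III at all.

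The paper's proof repairs exactly this point by using infinitely many independent perturbations simultaneously rather than a single $D$. It restricts to $Y'_{L,S}=\prod_\pp\OO_\pp^\times/\OO_\pp^*$, where the induced relation is the tail equivalence relation of a product measure, and computes the Araki--Woods asymptotic ratio set as described at the end of Appendix~\ref{a1}: each prime of degree $2kn$ is paired injectively with one of degree $(2k+1)n$ (possible for large $k$ because $m_{2k}<m_{2k+1}$, which already uses the Chebotarev asymptotics), giving a family of mutually independent two-point perturbations, each with ratio exactly $q^{-\beta n}$, whose total mass $\sum_k q^{-2\beta kn}m_{2k}$ diverges precisely because $m_{2k}\sim[L:K]^{-1}q^{2kn}/(2k)$ and $\beta\le 1$. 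This divergence is what guarantees, via the Borel--Cantelli-type mechanism built into the definition of the asymptotic ratio set (together with Lemma~\ref{lapp} to pass to arbitrary positive-measure sets), that a positive-measure portion of any given $A$ is moved within $A$ with ratio close to $q^{-\beta n}$. To salvage your direct approach you would have to replace the single divisor $D$ by this divergent family; the divergence is exactly where both the quantitative density theorem and the restriction $0<\beta\le1$ enter.
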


\bp For $L=K$ the result is proved in \cite[Lemma~4.5.1]{Ja}. The general case is similar, but uses Corollary~\ref{cdistrib} instead of the distribution of prime ideals. Note also that an analogous statement for the field $\Q$ of rational numbers and $L=K$ goes back to~\cite{bla}. For the reader's convenience we will nevertheless sketch a proof.

Since the degree of every prime $\pp\in S^c\cap\Spl(L/K)$ is divisible by $n$ and the measure $\mu'_\beta$ has the property $\mu'_\beta(D\,\cdot)=N(D)^{-\beta}\mu'_\beta$, the ratio set is contained in the set $\{0\}\cup\{q^{-\beta nk}\mid k\in\Z\}$. Therefore it suffices to show that $q^{-\beta n}$ lies in the ratio set. Instead of the orbit equivalence relation on $X'_{L,S}$ we may consider the relation induced on the subset $Y'_{L,S}=\prod_{\pp\in S^c\cap\Spl(L/K)}\OO_\pp^\times/\OO^*_\pp$. But this is exactly the relation discussed at the end of Appendix~\ref{a1}: two points are equivalent if and only if their coordinates coincide outside a finite set of primes. The measure $\mu_{\beta,\pp}$ on $\OO_\pp^\times/\OO^*_\pp$ is given by $\mu_{\beta,\pp}(\pi_\pp^k\OO^*_\pp)=N(\pp)^{-\beta k}(1-N(\pp)^{-\beta})$, where $\pi_\pp$ is a uniformizer in $\OO_\pp$.

Let $m_k$ be the number of primes in $S^c\cap\Spl(L/K)$ of degree $kn$. By Corollary~\ref{cdistrib} we have $m_k\sim[L:K]^{-1}q^{kn}/k$. In particular, for sufficiently large $k$ we have $m_{2k}<m_{2k+1}$. Hence for every prime $\pp$ of degree $2kn$ we can choose a prime $\pp'$ of degree $(2k+1)n$ in such a way that the map $\pp\mapsto\pp'$ is injective. Now as the sets $I_j$ required by the definition of the asymptotic ratio set we take the sets $\{\pp,\pp'\}$, and as the sets $K_j$ and $L_j$ we take the single-point sets $\{(\pi_\pp\OO^*_\pp,\OO^*_{\pp'})\}$ and  $\{(\OO^*_\pp,\pi_{\pp'}\OO^*_{\pp'})\}$, respectively. Since
$\sum_kq^{-2\beta kn}m_{2k}=\infty$, we conclude that $q^{-\beta n}$ belongs to the asymptotic ratio set.
\ep

\bp[Proof of Theorem~\ref{ttype}]
Assume first that the extension $L/K$ is finite. Since the action of $\D_S$ on $(X_{L,S},\mu_\beta)$ is ergodic, in computing the ratio set instead of the orbit equivalence relation on $X_{L,S}$ we may consider the relation induced on any subset of positive measure. In particular, for every $S'\supset S$ we may consider the subset $X_{L,S,S'}$ introduced in Section~\ref{sphase}. As we discussed there, it can be identified with $X_{L,S'}$, and the equivalence relation we get on $X_{L,S'}$ is exactly the one defined by the action of the group $\D_{S'}$. Therefore the type of the action of~$\D_S$ on~$(X_{L,S},\mu_\beta)$ does not depend on $S$. Hence we may assume that $S$ includes all primes that ramify in~$L$. Then $X_{L,S}=\Gal(L/K)\times\aks/\oas^*$.

Consider the subset $\{e\}\times\aks/\oas^*$ of $X_{L,S}$. The equivalence relation induced on it is the one given by the action of the kernel $G$ of the Artin map $\D_S\to\Gal(L/K)$ on $\aks/\oas^*$. By the first (obvious) part of Corollary~\ref{cdistrib} the degree of every element in $G$ is divisible by $n$. Hence the ratio set of the action of $G$ on $(\aks/\oas^*,\mu_\beta)$ is contained in $\{0\}\cup\{q^{-\beta nk}\mid k\in\Z\}$. On the other hand, by Lemma~\ref{ltype} the number~$q^{-\beta n}$ is contained in the ratio set of the action of $\D'_S\subset G$ on $(X'_{L,S},\mu'_\beta)$. Hence, by Proposition~\ref{pproduct}, it is contained in the ratio set of the action of $\D'_S$ on $(\aks/\oas^*,\mu_\beta)$. This proves the theorem when $L/K$ is finite.

Now consider an arbitrary abelian extension $L/K$. By Proposition~\ref{psequence} the nonzero part of the ratio set of the action of $\D_S$ on~$(X_{L,S},\mu_\beta)$ is equal to the intersection of the nonzero parts of the ratio sets of the actions of $\D_S$ on $(X_{L,S}/\Gal(L/E),\mu_\beta)$ for all finite intermediate extensions $E/K$. Since $X_{L,S}/\Gal(L/E)=X_{E,S}$, from the first part of the proof we conclude that if $n<+\infty$, then the nonzero part of the ratio set of the action of~$\D_S$ on $(X_{L,S},\mu_\beta)$ equals $\{q^{-\beta nk}\mid k\in\Z\}$, hence the action is of type III$_{q^{-\beta n}}$.

In the case $n=+\infty$ we can only conclude that the ratio set is either $\{1\}$ or $\{0,1\}$, so the factor $M_L=\pi_{\varphi_\beta}(A_{L,S})''$ is either semifinite or of type III$_0$. Since $M_K=M_L^{\Gal(L/K)}$, there exists a normal conditional expectation $M_L\to M_K$. As $M_K$ is of type III,
by \cite[Proposition~10.21]{Stra} it follows that~$M_L$ is also of type III, hence it is of type III$_0$.
\ep

For every $\lambda\in(0,1]$ there exists a unique injective factor of type III$_\lambda$ with separable predual. Therefore for $n<+\infty$ the above result completely describes the von Neumann algebra $\pi_{\varphi_\beta}(A_{L,S})''$. For type III$_0$ factors a complete invariant is the flow of weights. The Galois group $\Gal(L/K)$ acts on the flow of weights of $\pi_{\varphi_\beta}(A_{L,S})''$ and the quotient by $\Gal(L/E)$ gives the flow of weights of~$\pi_{\varphi_\beta}(A_{E,S})''$. Since the flows corresponding to finite extensions are periodic, it is not difficult to see that this is enough to completely describe the flow of weights of $\pi_{\varphi_\beta}(A_{L,S})''$, but we will do everything explicitly.

\smallskip

The flow of weights of a crossed product algebra, or of the von Neumann algebra of an equivalence relation, can be described as follows, see~\cite {CT}. Assume a countable group $\Gamma$ acts ergodically by non-singular transformations on a standard measure space $(X,\mu)$. Let $\lambda$ be the Lebesgue measure on $\R$. Define a new action of $\Gamma$ on $(\R\times X,\lambda\times\mu)$ by
$$
g(s,x)=\left(s-\log\frac{dg\mu}{d\mu}(gx),gx\right).
$$
Consider the measure theoretic quotient $\tilde X$ of $(\R\times X,\lambda\times\mu)$ by this action. So $\tilde X$ is a standard Borel space with a measure class $[\tilde\mu]$ such that $L^\infty(\tilde X,\tilde\mu)=L^\infty(\R\times X,\lambda\times\mu)^\Gamma$. The action of~$\R$ on~$\R\times X$ given by $t(s,x)=(s+t,x)$ induces an ergodic flow $\{F_t\}_{t\in\R}$ on $(\tilde X,\tilde\mu)$. This flow depends, up to isomorphism, only on the measure class of $\mu$ and the orbit equivalence relation $\RR$ on $X$ defined by the action of $\Gamma$. It is the flow of weights of the factor $W^*(\RR)$. It is known that the action of $\Gamma$ on $(X,\mu)$ is of type III$_0$ if and only if the flow is non-transitive, that is, there is no orbit of positive measure.

\smallskip

Returning to the Bost-Connes systems, consider an abelian extension $L/K$. As before, let $\F_{q^n}$, $n\in\N\cup\{+\infty\}$, be the algebraic closure of $\F_q$ in $L$.

Define a continuous map $X_{L,S}\to\Gal(\F_{q^n}/\F_q)$ as the composition of the quotient map
$$
X_{L,S}\to X_{L,S}/\Gal(L/\F_{q^n}K)=X_{\F_{q^n}K,S}=\Gal(\F_{q^n}K/K)\times\aks/\oas^*,
$$
where we have used that any finite constant field extension of $K$ is unramified at every prime, with the projection
$$
\Gal(\F_{q^n}K/K)\times\aks/\oas^*\to\Gal(\F_{q^n}K/K)=\Gal(\F_{q^n}/\F_q).
$$
Taking the direct product with the identity map on $\R$ we get a continuous map $$\R\times X_{L,S}\to\R\times\Gal(\F_{q^n}/\F_q).$$ This map becomes $\D_S$-equivariant, if we define an action of $\D_S$ on $\R\times\Gal(\F_{q^n}/\F_q)$ by
$$
D(s,g)=(s-\beta\log N(D),g\operatorname{res}_{\F_{q^n}}(r_{\F_{q^n}K/K}(D))^{-1}).
$$
Note that $\operatorname{res}_{\F_{q^n}}(r_{\F_{q^n}K/K}(D))$ is simply the Frobenius automorphism raised to the power $\deg D$.
Denote by $\lambda_n$ the normalized Haar measure on $\Gal(\F_{q^n}/\F_q)$. Then the map $\R\times X_{L,S}\to\R\times\Gal(\F_{q^n}/\F_q)$ gives us a $\D_S$-equivariant embedding
$$
L^\infty(\R\times\Gal(\F_{q^n}/\F_q),\lambda\times\lambda_n)\hookrightarrow L^\infty(\R\times X_{L,S},\lambda\times\mu_\beta).
$$

\begin{lemma} \label{lflow}
We have $L^\infty(\R\times X_{L,S},\lambda\times\mu_\beta)^{\D_S}=L^\infty(\R\times\Gal(\F_{q^n}/\F_q),\lambda\times\lambda_n)^{\D_S}$.
\end{lemma}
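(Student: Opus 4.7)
The inclusion $\supseteq$ is immediate, since the map $\R\times X_{L,S}\to\R\times\Gal(\F_{q^n}/\F_q)$ was constructed to be $\D_S$-equivariant. For the reverse inclusion I follow the strategy of Theorem~\ref{tphase}. First, using that for every finite intermediate extension $E$ the quotient map $X_{L,S}\to X_{L,S}/\Gal(L/E)=X_{E,S}$ is $\D_S$-equivariant and that continuous compactly supported functions on $\R\times X_{L,S}$ are approximable by pullbacks from $\R\times X_{E,S}$, both sides can be written as direct limits over finite $E$ (with $\F_{q^{n_E}}$ in place of $\F_{q^n}$ on the right, where $n_E$ is the degree of the algebraic closure of $\F_q$ in $E$). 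So it suffices to prove the lemma when $L/K$ is finite.

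In the finite case, apply the second reduction of Theorem~\ref{tphase} to enlarge $S$ so that it contains every prime ramifying in $L$; then $X_{L,S}\cong\Gal(L/K)\times\aks/\oas^*$ and $D\in\D_S$ acts diagonally via the Artin map. Any $\D_S$-invariant $f\in L^\infty(\R\times X_{L,S})$ decomposes as a Fourier series over the compact group $\Gal(L/K)$,
\[
f(s,g,a)=\sum_{\chi\in\widehat{\Gal(L/K)}}\chi(g)f_\chi(s,a),
\]
and invariance of $f$ becomes the cocycle identity
\[
f_\chi(s-\beta\log N(D),Da)=\chi(r_{L/K}(D))f_\chi(s,a),\qquad D\in\D_S.
\]
Now split the sum according to whether $\chi$ factors through the restriction $\Gal(L/K)\to\Gal(\F_{q^n}K/K)=\Gal(\F_{q^n}/\F_q)$. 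For $\chi$ nontrivial on the geometric subgroup $\Gal(L/\F_{q^n}K)$, the Artin $L$-function $L(s,\chi)$ is regular and nonzero at $s=1$, so the product
\[
\prod_{\pp\in S^c}\left|\frac{1-N(\pp)^{-\beta}}{1-\chi(\pp)N(\pp)^{-\beta}}\right|
\]
diverges to zero, and testing $f_\chi$ against a compactly supported function in $s$ reduces the cocycle identity to the averaging estimate of Theorem~\ref{tphase}, forcing $f_\chi=0$. For $\chi$ that does factor through $\Gal(\F_{q^n}/\F_q)$, the same Euler-product argument applied to the trivial character on the geometric part forces $f_\chi(s,a)$ to be independent of $a$; then $\chi(g)f_\chi(s)$ descends to a function on $\R\times\Gal(\F_{q^n}/\F_q)$ and lies in the right-hand side.

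The main obstacle is handling the extra $\R$-variable while running the $L$-function argument; the natural device is to integrate $f_\chi$ against compactly supported test functions in $s$, which reduces the problem back to the Euler-product divergence already established in the proof of Theorem~\ref{tphase}. Once this is in place, the remaining bookkeeping — matching the surviving $\chi$ with the Fourier decomposition of the right-hand side over $\Gal(\F_{q^n}/\F_q)$ — is routine.
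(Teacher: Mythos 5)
Your two reductions (to a finite extension, then to $S$ containing the ramified primes) match the paper, and your Fourier decomposition correctly identifies what has to be shown: $f_\chi=0$ for $\chi$ nontrivial on $\Gal(L/\F_{q^n}K)$, and independence of $f_\chi(s,a)$ from $a$ for the remaining $\chi$. The gap is in the mechanism you invoke for both claims. The Euler-product estimate in the proof of Theorem~\ref{tphase} bounds the projection onto $\D_S^+$-invariant vectors of $L^2(Y_{L,S},d\mu)$ applied to a function of the Galois coordinate; it is part of the proof that the KMS$_\beta$-state is unique, i.e.\ that the \emph{untwisted} action of $\D_S$ on $(X_{L,S},\mu_\beta)$ is ergodic. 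Your cocycle identity asks for more: restricting it to divisors of degree zero gives $f_\chi(s,Da)=\chi(r_{L/K}(D))f_\chi(s,a)$, so you must rule out nonzero eigenfunctions of the \emph{degree-zero} subgroup acting on $(\aks/\oas^*,\mu_\beta)$ and, for the surviving $\chi$, prove that this subgroup acts ergodically. Integrating against a test function in $s$ does not reduce this to the averaging estimate, because the shift $s\mapsto s-\beta\log N(D)$ destroys exact invariance for every divisor of nonzero degree, and the Euler product is indexed precisely by primes, all of positive degree. The clearest symptom is the trivial character: there the product $\prod_\pp\bigl|\tfrac{1-N(\pp)^{-\beta}}{1-\chi(\pp)N(\pp)^{-\beta}}\bigr|$ is identically $1$ and the estimate gives no information, yet this is exactly the case where you must show $f_\chi(s,a)$ is independent of $a$, i.e.\ that $\D_S^0$ acts ergodically on the adelic part --- a statement equivalent to the action being of type III$_{q^{-\beta}}$ rather than III$_0$, which cannot be extracted from the uniqueness-of-KMS-states argument alone.

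The paper supplies the missing ingredient from the type computation: by the proof of Theorem~\ref{ttype} the ratio set of the action of the Artin kernel $G\subset\D_{S'}$ on $\ak_{S'}/\OO_{\ak,S'}^*$ coincides with the essential range of the Radon--Nikodym cocycle, so Proposition~\ref{pergodic} yields ergodicity of the degree-zero subgroup $G_0$; this is where the Chebotarev density theorem and the asymptotic ratio set enter, and some input of this kind is unavoidable. Granting that ergodicity, your character decomposition would finish the argument along essentially the same lines as the paper (which instead restricts $f$ to $\R\times\{e\}\times\ak_{S'}/\OO_{\ak,S'}^*$ and propagates along a single $\D_S$-orbit), so the defect is not the overall architecture but the attribution of the crucial step to the $L$-function estimate, which does not prove it.
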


\bp It suffices to check that the subalgebras of $\Gal(L/E)$-invariant elements on both sides coincide for all intermediate finite extensions $E$ of $K$. Since $X_{L,S}/\Gal(L/E)=X_{E,S}$, this means that it is enough to prove the lemma for finite extensions.

Choose a finite set $S'$ which contains $S$ and all primes that ramify in $L$. Consider the set $X_{L,S,S'}$ introduced in Section~\ref{sphase}. It can be identified with $X_{L,S'}=\Gal(L/K)\times\ak_{S'}/\OO_{\ak,S'}^*$.  Consider the subset $\{e\}\times \ak_{S'}/\OO_{\ak,S'}^*$ of $X_{L,S'}$. We claim that if $f\in L^\infty(\R\times X_{L,S},\lambda\times\mu_\beta)^{\D_S}$, then the restriction of $f$ to $\R\times\{e\}\times \ak_{S'}/\OO_{\ak,S'}^*$ depends only on the first coordinate and therefore defines a function $f_1\in L^\infty(\R,\lambda)$, and the function $f_1$ is $\beta n\log q$-periodic.

Indeed, let $G$ be the kernel of the Artin map $\D_{S'}\to\Gal(L/K)$. By the proof of Theorem~\ref{ttype} the nonzero part of the ratio set of the action of $G$ on $\{e\}\times \ak_{S'}/\OO_{\ak,S'}^*$ coincides with the set of values of the Radon-Nikodym derivatives. By Proposition~\ref{pergodic} it follows that the subgroup $G_0\subset G$ of divisors of degree zero acts ergodically on $\{e\}\times \ak_{S'}/\OO_{\ak,S'}^*$. Since the group $G_0$ acts trivially on~$\R$, the $G_0$-invariant function $f$ on $\R\times\{e\}\times \ak_{S'}/\OO_{\ak,S'}^*$ depends only on the first coordinate and therefore defines a function $f_1\in L^\infty(\R,\lambda)$. Since $f$ is $G$-invariant and the homomorphism $\deg\colon G\to n\Z$ is surjective, the function $f_1$ is $\beta n\log q$-periodic.

The function $f_1$ defines, in turn, a $\D_S$-invariant function $f_2$ on $\R\times\Gal(\F_{q^n}/\F_q)\cong\R\times\Z/n\Z$ such that $f_2(s,0)=f_1(s)$. Namely, $f_2(s,m)=f_1(s-\beta m\log q)$. Since the $\D_S$-orbit of almost every point in $\R\times X_{L,S}$ intersects $\R\times\{e\}\times \ak_{S'}/\OO_{\ak,S'}^*$, every $\D_S$-invariant function is completely determined by its values on this set. Hence $f=f_2\in L^\infty(\R\times\Gal(\F_{q^n}/\F_q),\lambda\times\lambda_n)^{\D_S}$.
\ep

Assume now that $n=+\infty$, so that $\F_{q^n}=\bar\F_q$. We can identify the Galois group $\Gal(\bar \F_q/\F_q)$ with the group $\displaystyle\lim_{\leftarrow}\Z/k\Z=\hat\Z$. The action of $\D_S$ on $\R\times\hat\Z$ that we get is given by
$$
D(s,a)=(s-\beta\deg D\log q, a-\deg D).
$$
By rescaling the first coordinate we can instead consider the action $D(s,a)=(s-\deg D,a-\deg D)$. We can now formulate a refinement of Theorem~\ref{ttype} for $n=+\infty$.

\begin{theorem}
Assume the algebraic closure of $\F_q$ in $L$ is infinite. Then the von Neumann algebra $\pi_{\varphi_\beta}(A_{L,S})''$ is an ITPFI factor of type III$_0$. Its flow of weights is the flow on the compact group $(\R\times\hat\Z)/\Z$ defined by
$$
F_t(s,a)=\left(s+\frac{t}{\beta\log q},a\right).
$$
\end{theorem}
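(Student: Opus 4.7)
\emph{Proof plan.} Theorem~\ref{ttype} already establishes that $\pi_{\varphi_\beta}(A_{L,S})''$ is an injective factor of type~III$_0$, so only two tasks remain: identify the flow of weights explicitly, and upgrade injectivity to the ITPFI property. For the flow, I would use the Connes--Takesaki description recalled above: the flow of weights is the $\R$-action $t\cdot(s,x)=(s+t,x)$ on $L^\infty(\R\times X_{L,S},\lambda\times\mu_\beta)^{\D_S}$. Lemma~\ref{lflow} and the discussion immediately following it identify this fixed-point algebra with $L^\infty(\R\times\hat\Z)^{\D_S}$, and after the rescaling $s\mapsto s/(\beta\log q)$ of the first coordinate the $\D_S$-action reads $D(s,a)=(s-\deg D,a-\deg D)$ while the $\R$-action becomes $t\cdot(s,a)=(s+t/(\beta\log q),a)$.

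Next I would observe that this action factors through the degree map $\deg\colon\D_S\to\Z$, which is surjective: Corollary~\ref{cdistrib} applied with $L=K$ produces primes of every sufficiently large degree outside $S$, and in particular two of consecutive (hence coprime) degrees. Thus the fixed-point algebra equals $L^\infty((\R\times\hat\Z)/\Z)$, where $\Z$ is embedded diagonally as $m\mapsto(m,m)$. This diagonal copy of $\Z$ is closed in $\R\times\hat\Z$ (its projection to $\R$ is already closed and discrete), and its translates of the compact set $[0,1]\times\hat\Z$ cover $\R\times\hat\Z$, so it is cocompact. Hence the quotient is a compact Hausdorff abelian group (a solenoid), on which $F_t(s,a)=(s+t/(\beta\log q),a)$ is well defined, giving the prescribed flow.

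For the ITPFI conclusion I would invoke the Connes--Woods theorem: an injective factor of type~III$_0$ with separable predual is ITPFI if and only if its flow of weights is approximately transitive. The flow $F_t$ just computed is a one-parameter subgroup of translations of a compact abelian group, hence has pure point spectrum and is therefore approximately transitive, which settles the claim. As a direct sanity check, Lemma~\ref{ltype} already realises the relevant orbit relation as a tail equivalence relation on an infinite product probability space, a construction that is manifestly ITPFI.

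The most delicate step is the passage from the measure-theoretic identification of fixed-point algebras in Lemma~\ref{lflow} to the topological description of a flow on the solenoid $(\R\times\hat\Z)/\Z$. This rests on the two elementary but essential facts used above: surjectivity of $\deg\colon\D_S\to\Z$ and cocompactness of the diagonally embedded $\Z$ in $\R\times\hat\Z$. Beyond those points, approximate transitivity of a translation flow on a compact abelian group is standard, so no serious obstacle remains.
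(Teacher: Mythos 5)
Your proposal is correct and follows essentially the same route as the paper: the flow of weights is read off from Lemma~\ref{lflow} and the subsequent rescaled description of the $\D_S$-action on $\R\times\hat\Z$ (the paper leaves implicit the surjectivity of $\deg\colon\D_S\to\Z$ and the cocompactness of the diagonal $\Z$, which you rightly make explicit), and the ITPFI property is obtained from the Connes--Woods criterion via approximate transitivity of the translation flow on the solenoid. The only detail worth adding is that the statement concerns the reduction of $L^\infty(X_{L,S},\mu_\beta)\rtimes\D_S$ by the full projection $\chr_{Y_{L,S}}$, which changes neither the type nor the flow of weights.
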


\bp As follows from Lemma~\ref{lflow} and the subsequent discussion, the flow of weights of the factor $L^\infty(X_{L,S},\mu_\beta)\rtimes\D_S$ has the form given in the formulation of the theorem. We already know that this factor is of type III$_0$, but this is also clear from our description of the flow, since the flow is obviously non-transitive. The flow is, however, approximately transitive~\cite{CW}, hence the factor is~ITPFI. Since $\pi_{\varphi_\beta}(A_{L,S})''$ is a reduction of $L^\infty(X_{L,S},\mu_\beta)\rtimes\D_S$, the same assertions hold for~$\pi_{\varphi_\beta}(A_{L,S})''$, moreover, the two factors are isomorphic.
\ep

Next we will determine the center of the centralizer of the state $\varphi_\beta$ on $\pi_{\varphi_\beta}(A_{L,S})''$. This centralizer is the reduction of the von Neumann algebra
$L^\infty(X_{L,S},\mu_\beta)\rtimes\D^0_S$ by the projection $\chr_{Y_{L,S}}$.
The center of $L^\infty(X_{L,S},\mu_\beta)\rtimes\D^0_S$ is $L^\infty(X_{L,S},\mu_\beta)^{\D^0_S}$. If the field $\F_q$ is not algebraically closed in $L$, then the ratio set of the action of~$\D_S$ on $(X_{L,S},\mu_\beta)$ is strictly smaller than the essential range of the Radon-Nikodym cocycle, so by Proposition~\ref{pergodic} the action of $\D^0_S$ on $(X_{L,S},\mu_\beta)$ cannot be ergodic.

Consider the map $X_{L,S}\to\Gal(\F_{q^n}/\F_q)$ constructed before Lemma~\ref{lflow}. It gives us an embedding of $L^\infty(\Gal(\F_{q^n}/\F_q),\lambda_n)$ into $L^\infty(X_{L,S},\mu_\beta)$.

\begin{proposition}
If $L/K$ is an abelian extension and $\F_{q^{n}}$, $n\in\N\cup\{+\infty\}$, is the algebraic closure of~$\F_q$ in~$L$, then
$
L^\infty(X_{L,S},\mu_\beta)^{\D^0_S}=L^\infty(\Gal(\F_{q^n}/\F_q),\lambda_n).
$
\end{proposition}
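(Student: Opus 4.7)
The plan is to follow the structure of the proof of Lemma~\ref{lflow}, dropping the auxiliary $\R$-factor and using $\D^0_S$-invariance in place of the twisted $\D_S$-invariance used there. The inclusion $\supseteq$ is immediate from the construction of the map $X_{L,S}\to\Gal(\F_{q^n}/\F_q)$ preceding Lemma~\ref{lflow}: the induced action of $\D_S$ on $\Gal(\F_{q^n}/\F_q)$ factors through the degree, sending $D$ to $\mathrm{Frob}^{\deg D}$, hence $\D^0_S$ acts trivially on $\Gal(\F_{q^n}/\F_q)$, and the pull-back of any function on $\Gal(\F_{q^n}/\F_q)$ to $X_{L,S}$ is automatically $\D^0_S$-invariant.

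For the reverse inclusion I would first reduce to the case of finite $L/K$. Since the $\Gal(L/E)$-action commutes with the $\D_S$-action, the conditional expectation $E_E\colon L^\infty(X_{L,S},\mu_\beta)\to L^\infty(X_{L,S},\mu_\beta)^{\Gal(L/E)}=L^\infty(X_{E,S},\mu_\beta)$ preserves $\D^0_S$-invariance, and any $f\in L^\infty(X_{L,S},\mu_\beta)$ is the weak$^*$ limit of $E_E(f)$ as $E$ ranges over finite intermediate extensions of $K$. Since $L^\infty(\Gal(\F_{q^n}/\F_q),\lambda_n)$ is itself the weak$^*$ closed union of the subalgebras $L^\infty(\Gal(\F_{q^{n_E}}/\F_q),\lambda_{n_E})$, where $\F_{q^{n_E}}$ is the algebraic closure of $\F_q$ in $E$, it suffices to prove the statement for finite $L/K$.

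In the finite case, as in the proof of Theorem~\ref{ttype}, I would enlarge $S$ to contain every prime ramifying in $L$, so that $X_{L,S}=\Gal(L/K)\times\ak_S/\oas^*$ with diagonal $\D_S$-action. Writing $G$ for the kernel of the Artin map $r_{L/K}\colon\D_S\to\Gal(L/K)$, each fiber $\{g\}\times\ak_S/\oas^*$ is $G$-invariant, and the argument recalled in the proof of Lemma~\ref{lflow}, namely Proposition~\ref{pergodic} combined with the computation of the ratio set in Theorem~\ref{ttype}, shows that $G_0=G\cap\D^0_S$ acts ergodically on any such fiber. Hence a $\D^0_S$-invariant $f$ is essentially constant on almost every fiber, so descends to a function $\bar f$ on $\Gal(L/K)$, and the remaining $\D^0_S$-invariance amounts to $\bar f$ being invariant under translation by the image $H=r_{L/K}(\D^0_S)\subseteq\Gal(L/K)$.

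The main technical hurdle is the identification $H=\Gal(L/\F_{q^n}K)$. The inclusion $H\subseteq\Gal(L/\F_{q^n}K)$ follows from $\mathrm{res}_{\F_{q^n}}(r_{L/K}(D))=\mathrm{Frob}^{\deg D}$. For the reverse, since $S$ contains all primes ramifying in $L$ the map $r_{L/K}\colon\D_S\to\Gal(L/K)$ is surjective, so any $g\in\Gal(L/\F_{q^n}K)$ lifts to some $D_0\in\D_S$; the identity $\mathrm{Frob}^{\deg D_0}=g|_{\F_{q^n}}=\mathrm{id}$ forces $n\mid\deg D_0$, and Corollary~\ref{cdistrib} yields $\deg G=n\Z$ (as $G$ contains completely split primes of arbitrarily large degree in $n\Z$ together with their inverses), so one can correct $D_0$ by an element of $G$ of degree $-\deg D_0$ to obtain $D\in\D^0_S$ with $r_{L/K}(D)=g$. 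Thus $\bar f$ descends to $\Gal(L/K)/\Gal(L/\F_{q^n}K)\cong\Gal(\F_{q^n}/\F_q)$, completing the argument.
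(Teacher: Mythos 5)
Your overall strategy is the same as the paper's: reduce to finite $L/K$, show that the degree-zero part acts ergodically on the fibers of the map to $\Gal(\F_{q^n}/\F_q)$ using Proposition~\ref{pergodic} together with the type computation of Theorem~\ref{ttype}, and identify the quotient. The inclusion $\supseteq$, the reduction to finite extensions via conditional expectations, and the verification that $r_{L/K}(\D^0_S)=\Gal(L/\F_{q^n}K)$ are all correct.

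There is, however, a gap in the step ``enlarge $S$ to contain every prime ramifying in $L$.'' In the proof of Theorem~\ref{ttype} that reduction is legitimate because the ratio set of an ergodic relation is unchanged when one passes to the relation induced on a positive-measure subset such as $X_{L,S,S'}$. A fixed-point algebra does not automatically behave this way: the result for the enlarged set $S'$ only tells you that $f|_{X_{L,S,S'}}$ is a pullback from $\Gal(\F_{q^n}/\F_q)$ (since $f|_{X_{L,S,S'}}$ is $\D^0_{S'}$-invariant), and says nothing a priori about $f$ on the translates $DX_{L,S,S'}$ for $D$ of nonzero degree in the group generated by $S'\setminus S$, because such $D$ do not lie in $\D^0_S$. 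To close the gap you must show that restriction to $X_{L,S,S'}$ is injective on $\D^0_S$-invariant functions, i.e., that almost every $\D^0_S$-orbit meets $X_{L,S,S'}$; this is true (a.e.\ point has all coordinates nonzero, and $\deg\colon\D_{S'}\to\Z$ is surjective, so the coordinates at the primes in $S'\setminus S$ can be moved to units by an element of $\D^0_S$), but it requires an argument you do not give. The paper sidesteps the issue entirely by staying with the original $S$: it takes $Z=\Gal(L/\F_{q^n}K)\times_{\oas^*}\aks\subset X_{L,S}$, notes that the relation induced on $Z$ by $\D_S$ is the orbit relation of the subgroup $H\subset\D_S$ of divisors of degree divisible by $n$, and applies Proposition~\ref{pergodic} to that relation directly to get ergodicity of $\D^0_S$ on $Z$; the identification of $r_{L/K}(\D^0_S)$ is then not needed.
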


\bp As in the proof of Lemma~\ref{lflow}, we may assume that $L/K$ is finite. Consider the open subset
$$
Z=\Gal(L/\F_{q^n}K)\times_{\oas^*}\aks\subset X_{L,S}.
$$
The equivalence relation on $Z$ induced by the action of $\D_S$ on $X_{L,S}$ is the orbit equivalence relation defined by the action of the kernel $H$ of the map $\D_S\to\Gal(\F_{q^n}K/K)=\Gal(\F_{q^n}/\F_q)$. The group~$H$ is simply the subgroup of divisors in $\D_S$ of degree divisible by $n$. Since the action of $\D_S$ on $(X_{L,S},\mu_\beta)$ is of type III$_{q^{-\beta n}}$, by Proposition~\ref{pergodic} we conclude that the action of $\D^0_S\subset H$ on $(Z,\mu_\beta)$ is ergodic. It follows that any $\D^0_S$-invariant measurable subset of $X_{L,S}$ coincides, modulo a set of measure zero, with the union of translations of $Z$ by elements of $\Gal(L/K)$. The set $Z$ is nothing else but the pre-image of the unit element $e\in\Gal(\F_{q^n}/\F_q)$ under the map $X_{L,S}\to\Gal(\F_{q^n}/\F_q)$. Hence any $\D^0_S$-invariant measurable subset of $X_{L,S}$ is the pre-image of a subset of $\Gal(\F_{q^n}/\F_q)$.
\ep

We finish the section by making a few remarks about similarities and differences between the function field and number field~\cite{N2} cases. In both cases one can say that the key part is to compute the types of KMS-states for finite extensions (although this may not be immediately obvious from the argument for number fields in~\cite{N2}). If we have a finite extension~$L/K$ and an intermediate extension~$E/K$, then $\pi_{\varphi_\beta}(A_{E,S})''$ is a finite index subfactor of~$\pi_{\varphi_\beta}(A_{L,S})''$. For number fields the smallest factor $\pi_{\varphi_\beta}(A_{K,S})''$ is of type~III$_1$, and to prove this one only needs to know the distribution of prime ideals. This automatically implies that all other factors are of type III$_1$ as well~\cite{Loi}. For function fields the smallest factor $\pi_{\varphi_\beta}(A_{K,S})''$ is of type III$_{q^{-\beta}}$. Although this gives some restrictions on the type of $\pi_{\varphi_\beta}(A_{L,S})''$~\cite{Loi}, the exact value of the type still has to be computed, and the computation requires the Chebotarev density theorem.

\bigskip


\section{An extremely brief introduction to Drinfeld modules}\label{sdrinfeld}

In this section we will give a brief summary of the facts about Drinfeld modules
that are required to describe Jacob's system~\cite{Ja}. For a more thorough exposition see for
instance \cite{Goss}, \cite{Hay}, where proofs of the results of this section can be found.

\smallskip

Let $K$ be a global function field with constant field $\F_q$. Fix a distinguished prime $\infty$ in $K$. Denote by $\OO\subset K$ the subring of functions having no pole away from $\infty$. Let $\F_\infty=\F_{q^{d_\infty}}$ be the residue field of $K_\infty$. We will also consider it as a subfield of $K_\infty$. The valuation on~$K_\infty$ extends uniquely to $\bar K_\infty$, its algebraic closure. Let $\C_\infty$ be the completion of $\bar K_\infty$ with respect to this valuation. Then~$\C_\infty$ is algebraically closed and complete with respect to the valuation.

\smallskip

Let $\CT$ be the ring of twisted polynomials in $\tau$, with product satisfying the commutation relation $\tau a=a^q\tau$ for $a\in\C_\infty$. It
can be identified with the ring of $\F_q$-linear polynomial maps $\C_\infty\to\C_\infty$, with $\tau(x)=x^q$ and
multiplication given by composition of maps.
Define a map $D\colon\CT\to\C_\infty$ by sending a polynomial in $\tau$ to its constant term.

\begin{definition}
A Drinfeld module over $\C_\infty$ is a homomorphism $\phi\colon\OO \to\CT$, $a\mapsto\phi_a$, of $\F_q$-algebras
whose image is not contained in $\C_\infty$ and which satisfies $D(\phi_a) = a$. If the degree of $\phi_a$ (considered as a polynomial in $\tau$) is equal to $-d_\infty\ord_\infty(a)$ for all $a\in\OO^\times$, then $\phi$ is said to be of rank one.
\end{definition}

By a lattice in $\C_\infty$ we mean a finitely generated discrete $\OO$-submodule of $\C_\infty$. Rank-one lattices have the form $\xi\aaa$, where $\xi\in\C_\infty^\times$ and $\aaa$ is a nonzero ideal in $\OO$. Any such lattice $\Lambda$ defines a rank-one Drinfeld module as follows.

First define the exponential function associated to $\Lambda$ by
$$e_\Lambda(x) = x\prod_{\alpha\in \Lambda\setminus\{0\}} (1-x/\alpha).$$
This is an entire function on $\C_\infty$ that induces an $\F_q$-linear isomorphism $\C_\infty/\Lambda\cong\C_\infty$.
Next form a map
$$\phi_a^\Lambda(x) = ax \prod_{0\neq\alpha\in a^{-1}\Lambda/\Lambda} (1-x/e_\Lambda(\alpha))$$
for $a\in\OO$. The definition of $\phi^\Lambda_a$ is motivated by the identity $e_\Lambda(ax)=\phi^\Lambda_a(e_\Lambda(x))$. The map $\phi^\Lambda_a$ is $\F_q$-linear and polynomial, so $\phi^\Lambda_a$ is an element of $\CT$. Therefore $a\mapsto\phi_a^\Lambda\in\CT$ is a rank-one Drinfeld module.

This way we get a one-to-one correspondence between rank-one lattices in $\C_\infty$ and rank-one Drinfeld modules over $\C_\infty$. 

\smallskip

For a non-zero ideal $\aaa$ in $\OO$ and a Drinfeld module $\phi$, consider the left ideal of $\CT$ generated by the $\phi_a$ for $a\in\aaa$. This ideal can be shown to be principal, so there is a unique monic $\phi_\aaa\in\CT$ generating it. One can show that for each $b\in\OO$ there is a unique $\phi'_b\in\CT$ such that
$$\phi_\aaa\phi_b = \phi'_b\phi_\aaa.$$
The map $b\mapsto\phi'_b$ is a Drinfeld module, which we denote by $\aaa*\phi$. This gives an action of the nonzero ideals of $\OO$ on the set of Drinfeld modules. On the level of lattices, the Drinfeld module $\aaa*\phi^\Lambda$ corresponds to the lattice $D(\phi_\aaa)\aaa^{-1}\Lambda$.

\smallskip

We say that two lattices $\Lambda$ and $\Lambda'$ are homothetic, if $\Lambda'=\xi\Lambda$ for some $\xi\in\C_\infty^\times$. The corresponding Drinfeld modules are said to be isomorphic, and we get $\xi\phi^\Lambda\xi^{-1}=\phi^{\Lambda'}$.

\begin{definition}
A sign function on $K_\infty$ is a homomorphism $\sgn\colon K_\infty^\times\to\F^\times_\infty$ which coincides with the reduction modulo $\infty$ on $\OO^*_\infty$.
\end{definition}

From now on we fix a sign function. An element of $K_\infty^\times$ is called positive, if it belongs to the kernel of $\sgn$. Denote by $K^\times_+\subset K^\times$ the subgroup of positive elements in $K$.

Given a Drinfeld module $\phi$, write $\mu_\phi$ for the function mapping $a\in\OO^\times$ to the leading coefficient of $\phi_a$. We say that $\phi$ is normalized, if $\mu_\phi$ has image contained in~$\F_\infty^\times$, and that $\phi$ is $\sgn$-normalized, if there is some $\sigma\in\Gal(\F_\infty/\F_q)$ (depending on $\phi$) such that $\mu_\phi(a)=\sigma(\sgn(a))$ for all $a$.

If $\phi$ is a $\sgn$-normalized Drinfeld module, then by definition $\phi_{(a)}=\phi_a$ for every positive $a\in\OO^\times$. This in particular implies that the semigroup $\PP^+$ of principal ideals in $\OO$ with positive generators acts trivially on $\phi$. Let $I$ be the semigroup of nonzero ideals in $\OO$. Put $\Pic^+(\OO)=I/\PP^+$.

\begin{theorem}
Every rank-one Drinfeld module over $\C_\infty$ is isomorphic to a $\sgn$-normalized Drinfeld module. The action by ideals of $\OO$ preserves the set of $\sgn$-normalized rank-one Drinfeld modules and is transitive on this set, and if $\aaa*\phi=\bb*\phi$ for some ideals $\aaa$ and $\bb$ and a $\sgn$-normalized rank-one Drinfeld module $\phi$, then $\aaa$ and $\bb$ define the same class in $\Pic^+(\OO)$.
\end{theorem}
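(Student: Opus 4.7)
The plan is to work through the lattice description, using the stated one-to-one correspondence between rank-one Drinfeld modules over $\C_\infty$ and rank-one lattices up to homothety. The key local observation is that conjugation $\phi \mapsto \xi\phi\xi^{-1}$ scales the leading coefficient $\mu_\phi(a)$ by $\xi^{1-q^{\deg\phi_a}} = \xi^{1-q^{-d_\infty \ord_\infty(a)}}$, where the last equality uses the rank-one condition. To produce a $\sgn$-normalized representative of a given isomorphism class, I would first choose $\xi \in \C_\infty^\times$ so that the corrected $\mu_\phi$ takes values in $\F_\infty^\times$---this is possible since $\C_\infty$ is algebraically closed, so the required roots exist---and then absorb the residual discrepancy between the corrected $\mu_\phi$ and $\sgn$ into the choice of some $\sigma \in \Gal(\F_\infty/\F_q)$.

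For preservation of the $\sgn$-normalization under the ideal action and for transitivity, the associativity $(\aaa\bb)*\phi = \aaa*(\bb*\phi)$ together with uniqueness of the monic generator $\phi_\aaa$ yields $\phi_{\aaa\bb} = \phi_\aaa \phi_\bb$, and hence the multiplicativity $D(\phi_{\aaa\bb}) = D(\phi_\aaa) D(\phi_\bb)$. Combined with the identity $\phi_{(a)} = \phi_a$ for positive $a \in \OO^\times$, this is enough to compute $\mu_{\aaa*\phi}$ in terms of $\mu_\phi$ and verify that the sgn-normalized form is preserved. Transitivity of the ideal action then reduces to the classical transitivity of $\Pic(\OO)$ on homothety classes of rank-one lattices in $\C_\infty$, which is essentially built into the parametrization $\Lambda = \xi\aaa$ given in the preamble.

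The stabilizer assertion is the main obstacle. Using the lattice description, if $\aaa*\phi^\Lambda = \bb*\phi^\Lambda$ for a $\sgn$-normalized $\phi^\Lambda$, then the lattices themselves (not merely their homothety classes) must agree, that is, $D(\phi_\aaa)\aaa^{-1}\Lambda = D(\phi_\bb)\bb^{-1}\Lambda$. Consequently $\aaa\bb^{-1}$ is the principal fractional ideal generated by $c = D(\phi_\aaa)/D(\phi_\bb)$, and the remaining task is to show $c \in K^\times_+$. For this I would track the sign through the recursion: using $\phi_{(a)} = \phi_a$ for positive $a$ together with $D(\phi_{\aaa\bb}) = D(\phi_\aaa)D(\phi_\bb)$, an induction on the factorization shows that $\sgn(D(\phi_\aaa))$ depends only on the class of $\aaa$ modulo $\PP^+$; this forces $\sgn(c)=1$ and completes the argument. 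The careful sign bookkeeping---exactly the step where the choice of sign function is genuinely used and where $\Pic^+(\OO)$ replaces $\Pic(\OO)$---is the delicate technical point.
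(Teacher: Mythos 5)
The paper itself offers no proof of this theorem to compare against: Section~\ref{sdrinfeld} is an expository summary and explicitly defers all proofs to Goss and Hayes. Judged on its own, your sketch has the right skeleton in places (the conjugation formula $\mu_{\xi\phi\xi^{-1}}(a)=\xi^{1-N(a)}\mu_\phi(a)$, and the reduction of the stabilizer question to $\aaa\bb^{-1}=(c)$ with $c=D(\phi_\aaa)/D(\phi_\bb)\in K^\times$ via equality of lattices), but the step you yourself identify as the crux does not work as proposed. First, the identity $\phi_{\aaa\bb}=\phi_\aaa\phi_\bb$ is false: associativity gives $\phi_{\aaa\bb}=(\bb*\phi)_\aaa\,\phi_\bb$, where the left factor is formed from the module $\bb*\phi$, not from $\phi$, so $D(\phi_{\aaa\bb})=D\bigl((\bb*\phi)_\aaa\bigr)D(\phi_\bb)$ and your induction on the factorization collapses. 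Second, $\sgn$ is only defined on $K_\infty^\times$, while $D(\phi_\aaa)$ lies in $H^+\subset\C_\infty$, and $H^+$ is not contained in $K_\infty$ in general (it is ramified at $\infty$ over $H$); so ``$\sgn(D(\phi_\aaa))$ depends only on the class of $\aaa$ mod $\PP^+$'' is not even a well-formed statement without a non-canonical extension of $\sgn$. The argument that actually works uses the normalization in an essential way through the conjugation formula $(c)*\phi=\mu_\phi(c)^{-1}\phi\,\mu_\phi(c)$ for principal ideals (since $\phi_{(c)}=\mu_\phi(c)^{-1}\phi_c$), deducing that triviality of the action forces $\mu_\phi(c)=\sigma(\sgn(c))$ to centralize $\phi$ and hence to lie in $\F_q^\times$, which is what puts $(c)$ in $\PP^+$; multiplicativity of $D$ plays no role.

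There are further gaps in the other two assertions. For transitivity, transitivity of the fractional ideals on homothety classes of rank-one lattices only yields transitivity of the ideal action \emph{up to isomorphism} of Drinfeld modules; you must still show that the principal ideals act transitively on the $\sgn$-normalized representatives inside a single isomorphism class, which is again the conjugation formula above together with a count of the $\sgn$-normalized modules in one class (this is exactly where the index $(q^{d_\infty}-1)/(q-1)$ between $\Pic^+(\OO)$ and $\Pic(\OO)$ enters). For the first assertion, choosing $\xi$ so that the corrected $\mu_\phi$ lands in $\F_\infty^\times$ requires a single $\xi$ working for \emph{all} $a$ simultaneously; this uses the twisted multiplicativity $\mu_\phi(ab)=\mu_\phi(a)\mu_\phi(b)^{N(a)}$ to reduce to one nonconstant element, and the claim that the residual discrepancy is $\sigma\circ\sgn$ for some $\sigma\in\Gal(\F_\infty/\F_q)$ is itself a substantive step (one must extend $\mu_\phi$ continuously to $K_\infty^\times$ and compare with the classification of sign functions), not a bookkeeping afterthought. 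Likewise, preservation of $\sgn$-normalization under the ideal action follows from comparing leading coefficients in $\phi_\aaa\phi_b=(\aaa*\phi)_b\phi_\aaa$ with $\phi_\aaa$ monic, giving $\mu_{\aaa*\phi}(b)=\mu_\phi(b)^{N(\aaa)}$ -- a Frobenius twist of $\sigma\circ\sgn$ -- rather than from the identities you cite.
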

In particular, the set of $\sgn$-normalized rank-one Drinfeld modules consists of
$$\#\Pic^+(\OO)=h(\OO)\frac{q^{d_\infty}-1}{q-1}=h(K)d_\infty\frac{q^{d_\infty}-1}{q-1}$$ elements, where $h(\OO)$ is the class number of $\OO$ and $h(K)$ is that of $K$.

\smallskip

Given a finite abelian extension $L/K$ and a finite prime $\pp$ in $K$ unramified in $L$, we write $\sigma_\pp$ for the Frobenius automorphism $(\pp,L/K)\in\Gal(L/K)$. More generally, we write $\sigma_\aaa$ for the automorphism corresponding to every ideal $\aaa$ in $\OO$ whose prime factors are unramified in $L$.

\smallskip

Let $\phi$ be a fixed $\sgn$-normalized rank-one Drinfeld module, $y\in\OO$ be any nonconstant element, and $H^+$ be the field generated over $K$ by the coefficients of $\phi_y$.

\begin{theorem}
The extension $H^+/K$ is finite, abelian and unramified at every finite prime $\pp$ in $K$. It is independent of the choice of $\phi$ and~$y$. The Artin map $I\to\Gal(H^+/K)$ defines an isomorphism $\Pic^+(\OO)\cong\Gal(H^+/K)$, and for any $a\in\OO$ and a nonzero ideal $\aaa\subset\OO$ we have
$$
\sigma_\aaa(\phi_a)=(\aaa*\phi)_a.
$$
\end{theorem}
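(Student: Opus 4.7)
The plan is to reduce the entire theorem to the reciprocity identity $\sigma_\aaa(\phi_a)=(\aaa*\phi)_a$, since once this is in hand the other assertions are essentially formal consequences combined with the preceding structural theorem.

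First I would verify independence of $y$. The map $a\mapsto\phi_a$ is a ring homomorphism $\OO\to H^+\{\tau\}$, so the coefficients of $\phi_{ab}$ lie in the field generated by those of $\phi_a$ and $\phi_b$; since $\OO$ is finitely generated over $\F_q$ and any two nonconstant $y,y'\in\OO$ satisfy an algebraic relation, the field $H^+$ is the same for any choice. Next I would show that $\Gal(\C_\infty/K)$ preserves the set of $\sgn$-normalized rank-one Drinfeld modules: for $\tau\in\Gal(\C_\infty/K)$, the twist $\tau\phi$ (acting on coefficients) is still rank one, and its leading coefficient function is $\tau\circ\mu_\phi$, which remains $\F_\infty^\times$-valued and still differs from $\sgn$ by an element of $\Gal(\F_\infty/\F_q)$. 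Combined with the transitivity of the ideal action from the previous theorem, this defines a continuous map $\Gal(\C_\infty/K)\to\Pic^+(\OO)$ by $\tau\mapsto[\aaa_\tau]$ where $\tau\phi=\aaa_\tau*\phi$; by construction this factors through $\Gal(H^+/K)$, and $H^+$ is manifestly Galois and independent of $\phi$.

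The main step and main obstacle is to prove the reciprocity at a prime $\pp$ of $\OO$: if $\mathfrak{P}$ is a prime of $H^+$ above $\pp$ and $\sigma_\pp=(\mathfrak{P},H^+/K)$, then $\sigma_\pp\phi=\pp*\phi$. I would do this by reducing modulo $\mathfrak{P}$. One first shows that $\phi$ has good reduction at $\mathfrak{P}$: its coefficients lie in the valuation ring and the leading coefficients of $\phi_a$ remain units, which forces $\mathfrak{P}$ to be unramified and simultaneously establishes the unramification of $H^+/K$ at all finite primes. Then one computes both reductions: on one hand, $\overline{\phi_\pp}$ is a monic element of $\CT$ over the residue field whose constant term vanishes (since $\pp\subset\mathfrak{P}$), so it factors through the $\deg\pp$-th power of $\tau$; on the other hand, by the definition of the ideal action, $\phi_\pp\cdot\phi_a=(\pp*\phi)_a\cdot\phi_\pp$, and taking the reduction and matching the purely inseparable parts $\tau^{\deg\pp}$ on both sides identifies $\overline{(\pp*\phi)_a}$ with $\overline{\phi_a}^{\,q^{\deg\pp}}=\overline{\sigma_\pp(\phi_a)}$. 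Since rank-one Drinfeld modules have no nontrivial isogenies to themselves in characteristic zero, good reduction lifts this mod-$\mathfrak{P}$ identity to an identity in characteristic zero.

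Once the reciprocity is established for primes, multiplicativity in $\aaa$ of both $\sigma_\aaa$ and the ideal action extends it to arbitrary nonzero ideals. The induced homomorphism $I\to\Gal(H^+/K)$, $\aaa\mapsto\sigma_\aaa$, is surjective because its image contains all Frobenii $\sigma_\pp$ (which by Chebotarev generate $\Gal(H^+/K)$), and its kernel is exactly $\PP^+$: if $\aaa*\phi=\phi$ then by the final assertion of the preceding theorem $[\aaa]=1$ in $\Pic^+(\OO)$, i.e., $\aaa\in\PP^+$; conversely, for a positive generator $b$ of a principal ideal one has $\phi_{(b)}=\phi_b$ by definition, and the defining formula for the ideal action gives $(b)*\phi=\phi$. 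Hence the Artin map induces an isomorphism $\Pic^+(\OO)\cong\Gal(H^+/K)$, completing the proof.
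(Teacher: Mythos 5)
A preliminary remark: the paper itself gives no proof of this theorem. It occurs in Section~\ref{sdrinfeld}, which is explicitly a summary of known facts with proofs deferred to Goss and Hayes, so your proposal can only be compared with the standard argument in those references. That argument is indeed the one you are reconstructing: reduce everything to the reciprocity law $\sigma_\pp(\phi)=\pp*\phi$ at finite primes, and prove that law by reducing modulo a prime $\mathfrak{P}$ of $H^+$ above $\pp$ and identifying $\overline{\phi_\pp}$ with the Frobenius $\tau^{\deg\pp}$. The architecture of your proof is therefore correct.

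Several individual steps, however, do not establish what you claim. (1) Independence of $y$: the existence of an algebraic relation between $y$ and $y'$ does not show that the coefficients of $\phi_{y'}$ lie in the field generated by those of $\phi_y$. The standard argument instead uses that $\phi_{y'}$ lies in the centralizer of $\phi_y$ in $\CT$, and a coefficient-by-coefficient analysis of the relation $\phi_y\phi_{y'}=\phi_{y'}\phi_y$ shows this centralizer consists of twisted polynomials with coefficients in the field generated by $\F_\infty$ and the coefficients of $\phi_y$. (2) The identity $\overline{\phi_\pp}=\tau^{\deg\pp}$: vanishing of the constant term of $\overline{\phi_\pp}$ only gives divisibility by $\tau$, not by $\tau^{\deg\pp}$. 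What is actually needed is that $\overline{\phi}$ is a rank-one Drinfeld module over a field of $\OO$-characteristic $\pp$, so its height is squeezed between $1$ and the rank and hence equals $1$; thus $\overline{\phi}[\pp]=0$, $\overline{\phi_\pp}$ is purely inseparable of $\tau$-degree $\deg\pp$, and monicity then forces $\overline{\phi_\pp}=\tau^{\deg\pp}$. (3) You assume without argument that the coefficients of $\phi_a$ are integral over $\OO$ (without which ``good reduction at $\mathfrak{P}$'' is not even meaningful), and the final lifting of the congruence $\overline{\pp*\phi}=\overline{\sigma_\pp(\phi)}$ to an equality requires injectivity of reduction modulo $\mathfrak{P}$ on the finite set of $\sgn$-normalized modules --- the same fact that gives unramifiedness, since inertia fixes reductions --- rather than the absence of self-isogenies in generic characteristic, which is not the relevant statement. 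All of these points are fillable from Hayes, but as written they are genuine gaps in the chain of reasoning.
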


It follows that $H^+$ contains the Hilbert class field $H$ of $\OO$, which is the maximal abelian extension of $K$ that is unramified at every finite prime and completely split at infinity. The Galois group of~$H/K$ is $\Pic(\OO)=I/\PP$, where $\PP$ is the semigroup of nonzero principal ideals in $\OO$.

\smallskip

Given a $\sgn$-normalized rank-one Drinfeld module $\phi$ we can define an $\OO$-module structure on $\C_\infty$ using the maps $\phi_a$. For a nonzero proper ideal $\mm$ in $\OO$ denote by $\phi[\mm]\subset\C_\infty$ the set of $\mm$-torsion points with respect to this structure. If $\phi=\phi^\Lambda$, then the exponential map $e_\Lambda$ defines an $\OO$-module isomorphism $\mm^{-1}\Lambda/\Lambda\cong\phi[\mm]$.

Put $K_\mm=H^+(\phi[\mm])$. Denote by $\imo$ be the semigroup of ideals of $\OO$ which are relatively prime to~$\mm$, and let
$$\pplusmo=\{(a) \mid a\in \OO^\times,\ \sgn(a)=1,\ a\equiv 1 \mod\mm\}\subset\imo.$$
The quotient $\picplusmo=\imo/\pplusmo$ is called the narrow ray class group modulo $\mm$ relative to $\sgn$. Define also
$$
\PP_\mm=\{(a) \mid a\in \OO^\times,\ a\equiv 1 \mod\mm\}\subset \imo.
$$

\begin{theorem}
The extension $K_\mm/K$ is finite, abelian and unramified away from $\infty$ and the prime ideals dividing $\mm$.  It is independent of the choice of $\phi$.
The Artin map $\imo\to\Gal(K_\mm/K)$ defines an isomorphism $\picplusmo\cong\Gal(K_\mm/K)$, and for any $\aaa\in\imo$ and $\lambda\in\phi[\mm]$ we have
$$\sigma_\aaa(\lambda) = \phi_\aaa(\lambda).$$

The subfield $K^+_\mm\subset K_\mm$ of elements fixed by the semigroup $\PP_\mm$ is contained in $K_\infty$, so the extension $K^+_\mm/K$ is completely split at infinity. If we continue to denote by $\infty$ the extension of the place $\infty$ of~$K$ to $K^+_\mm$ defined by the inclusion $K^+_\mm\subset K_\infty$, then $K_\mm/K^+_\mm$ is totally ramified at $\infty$.
\end{theorem}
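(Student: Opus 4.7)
The plan is to factor $K_\mm/K$ through $H^+$ and combine the two layers. The preceding theorem already handles $H^+/K$ via the Artin isomorphism $\Pic^+(\OO)\cong\Gal(H^+/K)$, so the work is to understand $K_\mm/H^+$ through the $\OO$-action on the $\mm$-torsion. Since $\phi$ is defined over $H^+$, the group $\Gal(K_\mm/H^+)$ acts $\OO$-linearly on $\phi[\mm]$. The structural input is the $\OO$-module isomorphism $\phi[\mm]\cong\OO/\mm$, which follows from $\phi=\phi^\Lambda$ together with $e_\Lambda\colon\mm^{-1}\Lambda/\Lambda\cong\phi[\mm]$ and the rank-one identification $\mm^{-1}\Lambda/\Lambda\cong\OO/\mm$. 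Hence $\Gal(K_\mm/H^+)\hookrightarrow(\OO/\mm)^*$, giving that $K_\mm/K$ is finite and abelian. Independence of $\phi$ is immediate from the transitive ideal action on $\sgn$-normalized Drinfeld modules: if $\phi'=\bb*\phi$, then $\phi_\bb$ maps $\phi'[\mm]$ isomorphically to $\phi[\mm]$ over $H^+$, so $H^+(\phi'[\mm])=H^+(\phi[\mm])$.

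Next I would verify the Artin formula and read off ramification. At a finite prime $\pp\nmid\mm$ the $\phi_a$ are integral above $\pp$ and reduction is injective on $\mm$-torsion; this gives good reduction and unramifiedness of $K_\mm/H^+$ at $\pp$. For $\aaa=\pp$ prime, the formula $\sigma_\pp(\lambda)=\phi_\pp(\lambda)$ is checked by reducing modulo a chosen prime above $\pp$: the geometric Frobenius acts as the $q^{\deg\pp}$-power on the residue field, which matches $\phi_\pp$ mod $\pp$ after invoking the preceding theorem's identification $\sigma_\pp(\phi_a)=(\pp*\phi)_a$. Multiplicativity of both sides in $\aaa$ extends the formula to all of $\imo$. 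With the formula in hand, an ideal $\aaa$ lies in the Artin kernel iff $\aaa=(a)$ with $\sgn(a)=1$ (kills $H^+$) and $\phi_a|_{\phi[\mm]}=\id$ (kills $\phi[\mm]$), i.e.\ $a\equiv 1\pmod\mm$; this is precisely $\pplusmo$, so $\picplusmo\cong\Gal(K_\mm/K)$.

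For the final statements I would work inside $K_\infty$. Choosing $\phi=\phi^\Lambda$ with $\Lambda$ a rank-one lattice in $K_\infty$ (available because $\sgn$-normalization is arranged so that some $\sgn$-normalized representative arises from such a lattice), the exponential $e_\Lambda$ converges on $K_\infty$ and its $\mm$-division values lie in $\bar K_\infty$; the $\PP_\mm$-invariant combinations of these values descend into $K_\infty$ itself. This embeds $K^+_\mm\hookrightarrow K_\infty$, so $K^+_\mm/K$ is completely split at $\infty$, and the complementary Galois group $\Gal(K_\mm/K^+_\mm)=\PP_\mm/\pplusmo$ is then forced to coincide with the full inertia subgroup at $\infty$, by an order comparison with $|\F_\infty^*/\sgn(\OO^*)|$. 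The main obstacle is precisely this last local analysis at $\infty$: unramifiedness at finite primes and the Artin formula are formal consequences of good reduction and $\OO$-linearity, whereas pinning down the inertia at the distinguished place requires a careful interaction between the sign function and the local units $\OO_\infty^*$, which is essentially the content of Hayes's explicit construction that this section is summarizing.
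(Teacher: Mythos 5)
A preliminary remark: the paper does not prove this theorem. Section~3 is explicitly a summary of Drinfeld--Hayes explicit class field theory with proofs deferred to \cite{Goss} and \cite{Hay}, so your proposal has to be judged against the standard literature rather than against an argument in the text. The first two thirds of your sketch do follow that standard route correctly: the d\'evissage through $H^+$, the faithful $\OO$-linear action of $\Gal(K_\mm/H^+)$ on $\phi[\mm]\cong\OO/\mm$, good reduction at $\pp\nmid\mm$ together with injectivity of reduction on prime-to-$\pp$ torsion to get $\sigma_\pp(\lambda)=\phi_\pp(\lambda)$, and the identification of the Artin kernel with $\pplusmo$. Two small repairs are needed there: in the independence argument you must choose $\bb$ with $\phi'=\bb*\phi$ \emph{prime to} $\mm$ so that $\phi_\bb$ is injective on $\mm$-torsion (possible since every class contains such an ideal), and you never address surjectivity of the Artin map $\imo\to\Gal(K_\mm/K)$; it can be recovered by counting, since your injections bound $\#\Gal(K_\mm/K)$ above by $\#(\OO/\mm)^*\cdot\#\Pic^+(\OO)=\#\picplusmo$.

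The genuine gap is the final paragraph. The sentence ``the $\PP_\mm$-invariant combinations of these values descend into $K_\infty$ itself'' is the conclusion restated, not an argument: invariance under the global subgroup $\Gal(K_\mm/K^+_\mm)$ forces membership in the completion $K_\infty$ only after one knows that this subgroup \emph{contains} a decomposition group at $\infty$, which is exactly what is being proved, so the reasoning is circular. The order comparison you propose to close the circle is also numerically wrong: one has $[K_\mm:K^+_\mm]=\#(\PP_\mm/\pplusmo)=\#\F_\infty^\times=q^{d_\infty}-1$ (the generator $a\equiv1\bmod\mm$ of an ideal in $\PP_\mm$ is unique, and $\sgn$ maps $1+\mm$ onto $\F_\infty^\times$ by Riemann--Roch), whereas $\#\bigl(\F_\infty^\times/\sgn(\OO^*)\bigr)=(q^{d_\infty}-1)/(q-1)$ is the ramification index of $\infty$ in $H^+/K$, not in $K_\mm/K$; already for $K=\F_q(t)$ and $d_\infty=1$ your quantity is $1$ while $[K_\mm:K^+_\mm]=q-1$. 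What is actually required here is Hayes's local analysis at $\infty$: one realizes a $\sgn$-normalized $\phi$ as $\phi^{\xi\aaa}$ and studies the period $\xi$ and the division values $e_{\xi\aaa}(\xi a)$ directly in $\bar K_\infty$, showing they generate a totally ramified extension of $K_\infty$ of degree $q^{d_\infty}-1$ whose Galois group is the image of $\PP_\mm$. This is the one non-formal ingredient of the theorem; you correctly identify it as the obstacle, but identifying an obstacle is not the same as overcoming it, so the proof of the last two assertions is missing.
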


Clearly, the Galois group of $K_\mm^+/K$ is $\Pic_\mm(\OO)=\imo/\PP_\mm$. Note also that the Galois group of~$K_\mm/H^+$ is isomorphic to $(I_\mm\cap\PP^+)/\PP_\mm^+$. Since $\OO^*=\F_q^\times$, the only positive element in $\OO^*$ is~$1$, hence any ideal in $\PP^+$ has a unique positive generator. We therefore have a well-defined map $I_\mm\cap\PP^+\to(\OO/\mm)^*$, $(a)\mapsto a\mod \mm$ for positive $a$. It is not difficult to see that it is surjective, whence
\begin{equation} \label{egalois}
\Gal(K_\mm/H^+)\simeq (\OO/\mm)^*.
\end{equation}

Let $\K\subset\C_\infty$ be the union of the fields $K_\mm$ over non-zero proper ideals $\mm$ of $\OO$, and $K^{\ab,\infty}\subset K_\infty$ be the union of the fields $K_\mm^+$. These are abelian extensions of $K$ and
$$
\Gal(\K/K)\cong\lim_{\leftarrow}\Pic^+_\mm(\OO)\ \ \text{and}\ \ \Gal(K^{\ab,\infty}/K)\cong\lim_{\leftarrow}\Pic_\mm(\OO),
$$
where $\mm$ runs over the nonzero ideals of $\OO$ ordered by divisibility.
It follows that the Artin map $\akf^*\to\Gal(\K/K)$, where $\akf=\A_{\{\infty\}}$, defines isomorphisms
$$
\Gal(\K/K)\cong\akf^*/K_+^\times\ \ \text{and}\ \ \Gal(K^{\ab,\infty}/K)\cong\akf^*/K^\times.
$$
The field $K^{\ab,\infty}$ is the maximal abelian extension of $K$ in which $\infty$ splits completely.

\smallskip

Later we will need to know the constant fields of $\K$ and $K^{\ab,\infty}$ in order to calculate the types of KMS$_\beta$-states in the critical region.

\begin{lemma}\label{constantfields}
The constant fields of $K_\mm$ and $K_\mm^+$ are both equal to $\F_{q^{d_\infty}}$.
\end{lemma}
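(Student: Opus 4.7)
The plan is to establish the equality for $K_\mm^+$ first and then transfer it to $K_\mm$, exploiting in both steps the last assertion of the preceding theorem: $K_\mm^+/K$ is completely split at $\infty$ while $K_\mm/K_\mm^+$ is totally ramified at $\infty$.

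For the upper bound on the constant field of $K_\mm^+$, the point is that $K_\mm^+\subset K_\infty$. Any element of $K_\infty$ algebraic over $\F_q$ is a unit (otherwise its powers accumulate at $0$), and reduction modulo $\infty$ gives an injection of the $\F_q$-algebraic elements of $K_\infty$ into the residue field $\F_\infty=\F_{q^{d_\infty}}$. Hence the constant field of $K_\mm^+$ embeds into $\F_{q^{d_\infty}}$.

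For the lower bound I would sandwich the Hilbert class field $H$ inside $K_\mm^+$ and show that the constant field of $H$ already equals $\F_{q^{d_\infty}}$. The constant field extension $K\cdot\F_{q^{d_\infty}}$ is abelian, unramified at every place, and completely split at $\infty$ (by the usual formula with $m=d_\infty$), hence contained in $H$; conversely $H\subset K_\infty$ by complete splitting at $\infty$, so no further constants are gained and the constant field of $H$ is exactly $\F_{q^{d_\infty}}$. For $H\subset K_\mm^+$, I would use $H\subset H^+\subset K_\mm$ and choose a place $\infty'$ of $K_\mm$ above $\infty$ whose restriction to $K_\mm^+$ produces the given embedding $K_\mm^+\hookrightarrow K_\infty$; then $H$, embedded via $\infty'|_H$, lies in $K_\mm\cap K_\infty$. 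The intersection $K_\mm\cap K_\infty$ is exactly $K_\mm^+$: total ramification of $K_\mm/K_\mm^+$ at $\infty$ together with complete splitting of $K_\mm^+/K$ at $\infty$ forces the decomposition group of $\infty'$ in $\Gal(K_\mm/K)$ to equal $\Gal(K_\mm/K_\mm^+)$, which under the standard identification with $\Gal((K_\mm)_{\infty'}/K_\infty)$ pins down the fixed field inside $K_\mm$ to be $K_\mm^+$.

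Finally, to deduce the same statement for $K_\mm$, I would observe that any constant field extension of $K$ is unramified at every place, so a constant field extension contained in $K_\mm$ must lie in the maximal subfield of $K_\mm$ unramified at $\infty$. The same total-ramification argument as above identifies this maximal subfield with $K_\mm^+$, so the constant fields of $K_\mm$ and $K_\mm^+$ coincide. The main obstacle I anticipate is the careful matching of global and local decomposition data needed to establish $K_\mm\cap K_\infty = K_\mm^+$; everything else is a direct reading of the class-field-theoretic facts collected earlier in the section.
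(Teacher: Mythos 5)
Your proof is correct and follows essentially the same route as the paper's: bound the constant field of $K_\mm^+$ above by the residue field $\F_{q^{d_\infty}}$ at $\infty$ using complete splitting, bound it below via $\F_{q^{d_\infty}}K\subset H\subset K_\mm^+$, and transfer to $K_\mm$ using total ramification of $K_\mm/K_\mm^+$ at $\infty$. The only difference is that your decomposition-group argument for $H\subset K_\mm^+$ is more elaborate than necessary: this containment is immediate from the Artin-map identifications $\Gal(K_\mm^+/K)\cong I_\mm/\PP_\mm$ and $\Gal(H/K)\cong I/\PP$ together with $\PP_\mm\subset\PP$.
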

\begin{proof}
First consider $K_\mm^+$. On the one hand, the residue field of $K_\mm^+$ at infinity is $\F_{q^{d_\infty}}$, since $K_\mm^+$ is completely split at infinity. On the other hand, $K_\mm^+$ contains the Hilbert class field~$H$. Since $\F_{q^{d_\infty}}K/K$ is unramified at every prime and completely split at infinity, we have $\F_{q^{d_\infty}}K\subset H\subset K_\mm^+$. Therefore $\F_{q^{d_\infty}}$ is both the constant field of $K_\mm^+$ and the residue field of $K_\mm^+$ at infinity. Since $K_\mm/K_\mm^+$ is totally ramified at infinity, the residue field of $K_\mm$ at infinity is $\F_{q^{d_\infty}}$. Hence the constant field of~$K_\mm$ is also $\F_{q^{d_\infty}}$.
\end{proof}

\begin{corollary} \label{cconstantf}
The algebraic closures of $\F_q$ in $\K$ and $K^{\ab,\infty}$ are equal to $\F_{q^{d_\infty}}$.
\end{corollary}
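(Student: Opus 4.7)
The plan is to deduce the corollary from Lemma~\ref{constantfields} together with the fact that $\K$ and $K^{\ab,\infty}$ are expressed as directed unions of the $K_\mm$ and $K_\mm^+$ respectively.

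More precisely, I would first observe that any element of $\K$ that is algebraic over $\F_q$ lies in some $K_\mm$, because the family $\{K_\mm\}_\mm$ is directed under divisibility of $\mm$ and $\K=\bigcup_\mm K_\mm$. Such an element therefore lies in the algebraic closure of $\F_q$ inside $K_\mm$, which equals the constant field of $K_\mm$; by Lemma~\ref{constantfields} this is $\F_{q^{d_\infty}}$. Conversely, $\F_{q^{d_\infty}}$ is already contained in each $K_\mm$, since Lemma~\ref{constantfields} exhibits it as the constant field of $K_\mm$ (alternatively, $\F_{q^{d_\infty}}K\subset H\subset H^+\subset K_\mm$). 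Hence the algebraic closure of $\F_q$ in $\K$ is exactly $\F_{q^{d_\infty}}$.

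The same reasoning applies verbatim to $K^{\ab,\infty}=\bigcup_\mm K_\mm^+$, using that the constant field of each $K_\mm^+$ is $\F_{q^{d_\infty}}$ by Lemma~\ref{constantfields}. Since there are essentially no computations involved beyond invoking the lemma and the definitions of $\K$ and $K^{\ab,\infty}$ as unions, there is no real obstacle; the content of the corollary is entirely carried by the preceding lemma, and the corollary itself is a one-line consequence stated separately for later reference (it will be used in Section~\ref{stype} to identify $n=d_\infty$ when computing the type of the KMS-states for Jacob's and Consani-Marcolli's systems).
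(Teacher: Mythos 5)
Your proof is correct and matches the paper's (implicit) argument: the corollary is stated without a separate proof precisely because it follows at once from Lemma~\ref{constantfields} and the presentation of $\K$ and $K^{\ab,\infty}$ as directed unions of the $K_\mm$ and $K_\mm^+$, exactly as you argue. Any element algebraic over $\F_q$ lies in some $K_\mm$ (resp.\ $K_\mm^+$) and hence in its constant field $\F_{q^{d_\infty}}$, which conversely is contained in every member of the union.
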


Finally, note that the isomorphisms \eqref{egalois} imply that the Artin map $\akf^*\to\Gal(\K/K)$ induces an isomorphism
$$
\Gal(\K/H^+)\cong\ohs,
$$
where $\hat\OO=\OO_{\A,\{\infty\}}$. This is also clear from the isomorphisms $\Gal(H^+/K)\cong\Pic^+(\OO)\cong\akf^*/K^\times_+\ohs$ and $\Gal(\K/K)\cong\akf^*/K_+^\times$.

\bigskip

\section{Dynamical systems arising from Drinfeld modules}\label{sjacob}

Inspired by the work of Drinfeld \cite{Dri} and Hayes \cite{Hay0} on explicit class field theory for function fields, Jacob \cite{Ja} defined a dynamical system associated to a function field $K$. The main goal of this section is to show that Jacob's system fits into our framework. 

\smallskip

For a $\sgn$-normalized rank-one Drinfeld module $\phi$, write $\phi(\C_\infty)^{\tor}$ for the set $\cup_\mm\phi[\mm]$ of torsion points of the $\OO$-action on $\C_\infty$ given by $a\xi = \phi_a(\xi)$. If $\phi=\phi^\Lambda$, then $e_\Lambda$ defines an $\OO$-module isomorphism $K\Lambda/\Lambda\cong\phi(\C_\infty)^{\tor}$. Let $X_\phi$ be the group of characters of $\phi(\C_\infty)^{\tor}$. Put $X=\bigsqcup_\phi X_\phi$, where the union is taken over the set of $\sgn$-normalized rank-one Drinfeld modules. Since the set of such modules is finite and each $X_\phi$ is a profinite group, $X$ is compact.

Define an action of $I$ on $X$ by letting an ideal $\aaa$ map the character $\chi\in X_\phi$ to the character $\chi^\aaa=\chi\circ(\aaa^{-1}*\phi)_\aaa\in X_{\aaa^{-1}*\phi}$, where by $\aaa^{-1}*\phi$ we, of course, mean the unique  $\sgn$-normalized Drinfeld module such that $\aaa*(\aaa^{-1}*\phi)=\phi$. This is a semigroup action. There is also an action of $\Gal(\K/K)$ on $X$ given by $g\chi= \chi\circ g\in X_{g^{-1}(\phi)}$, where $g^{-1}(\phi)$ is the Drinfeld module defined by $g^{-1}(\phi)_a=g^{-1}(\phi_a)$. This action commutes with the ideal action.

\smallskip

The action of $I$ on $X$ defines a partially defined action of the group of fractional ideals of $\OO$ on~$X$, which gives rise to a transformation groupoid $\GG_J$. The C$^*$-algebra $C_{K,\infty}$ underlying Jacob's system is the C$^*$-algebra of this groupoid. Alternatively, one can say that $C_{K,\infty}$ is the semigroup crossed product $C(X)\rtimes I$ with respect to the action
$$
(\aaa f)(\chi)=\begin{cases}f(\chi'),&\text{if}\ {\chi'}^\aaa=\chi,\\
0,&\text{if no such}\ \chi'\ \text{exists.}\end{cases}
$$
The one-parameter family $\aaa\mapsto N(\aaa)^{it}$ of characters of $I$ defines a one-parameter group of automorphisms $\sigma_t$ of $C_{K,\infty}$.

\smallskip

We want to show that the system $(C_{K,\infty},\sigma)$ is isomorphic to the system $(A_{\K,\{\infty\}},\sigma)$ introduced in Section~\ref{sphase}. The latter system was defined using the action of $I=\D^+_{\{\infty\}}$ on $Y_{\K,\{\infty\}}=\Gal(\K/K)\times_{\ohs}\hat\OO$. Define an action of $\Gal(\K/K)$ on $Y_{\K,\{\infty\}}$ by $g(x,y)=(g^{-1}x,y)$.

\begin{theorem} \label{tJ}
There is a $\Gal(\K/K)$- and $I$-equivariant homeomorphism $\pi\colon X\to Y_{\K,\{\infty\}}$. In particular, the C$^*$-dynamical systems $(C_{K,\infty},\sigma)$ and $(A_{\K,\{\infty\}},\sigma)$ are isomorphic.
\end{theorem}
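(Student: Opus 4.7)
The plan is to construct $\pi$ explicitly. Both sides decompose, non-canonically, into $\#\Pic^+(\OO)$ components: on the right, the projection $Y_{\K,\{\infty\}}\to\Gal(\K/K)/\ohs\cong\Gal(H^+/K)\cong\Pic^+(\OO)$ has fibers homeomorphic to $\hat\OO$, while on the left the set of $\sgn$-normalized rank-one Drinfeld modules is a $\Pic^+(\OO)$-torsor with each $X_\phi$ a free rank-one profinite $\hat\OO$-module. The task is to match these decompositions equivariantly.

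First I would fix a reference $\sgn$-normalized $\phi_0$ corresponding to the lattice $\OO$, together with a non-trivial additive character $\psi\colon\akf/K\to\C^\times$ making $\hat\OO$ self-dual. The exponential map $e_\OO$ and the strong approximation isomorphism $K/\OO\cong\akf/\hat\OO$ identify $\phi_0(\C_\infty)^{\tor}$ with $\akf/\hat\OO$ as $\OO$-modules, and Pontryagin duality then yields $X_{\phi_0}\cong\hat\OO$, with $y\in\hat\OO$ corresponding to $\xi\mapsto\psi(y\xi)$.

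For a general $\chi\in X_\phi$, write $\phi=\aaa^{-1}*\phi_0$ with $\aaa\in I$ (determined modulo $\PP^+$). The ideal-action polynomial $(\aaa^{-1}*\phi_0)_\aaa\colon\phi(\C_\infty)^{\tor}\to\phi_0(\C_\infty)^{\tor}$ dualizes to an injection $X_\phi\hookrightarrow X_{\phi_0}\cong\hat\OO$ whose image is the annihilator $\aaa\hat\OO$ of $\phi[\aaa]$. After choosing an adele $u_\aaa\in\akf^*$ with $v_\pp(u_\aaa)=v_\pp(\aaa)$ at every finite $\pp$, division by $u_\aaa$ produces a genuine element $y_\chi\in\hat\OO$, and one sets
$$
\pi(\chi)=\bigl[\,r(u_\aaa),\,y_\chi\,\bigr]\in Y_{\K,\{\infty\}},
$$
where $r\colon\akf^*\to\Gal(\K/K)$ is the Artin map. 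The two ambiguities---the choice of $u_\aaa$ within its $\ohs$-orbit and of $\aaa$ within its $\PP^+$-class---are absorbed respectively by the defining $\ohs$-relation on $Y_{\K,\{\infty\}}$ and by the fact that $K_+^\times\subset\ker r$.

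The remaining verifications are continuity and bijectivity (both immediate from the construction, since both spaces are compact Hausdorff), $I$-equivariance (a direct computation from the homomorphism property of $r$ together with the definition of $\pi$), and $\Gal(\K/K)$-equivariance. I expect the last to be the main obstacle. Reducing to the action of $\Gal(\K/H^+)\cong\ohs$ on the $\hat\OO$-fibers, the required matching follows from Hayes' theorem $\sigma_\aaa(\lambda)=\phi_\aaa(\lambda)$ for torsion points $\lambda\in\phi[\mm]$, which says precisely that the Galois action on Drinfeld-module torsion is implemented by the ideal-action polynomials and corresponds to the multiplicative $\ohs$-action on $\hat\OO$ (after careful sign bookkeeping). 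Once $\pi$ is shown to be a $\Gal(\K/K)$- and $I$-equivariant homeomorphism, the isomorphism $(C_{K,\infty},\sigma)\cong(A_{\K,\{\infty\}},\sigma)$ follows automatically, since both algebras are realized as semigroup crossed products $C(\,\cdot\,)\rtimes I$ with the common dynamics $\sigma_t(u_\aaa)=N(\aaa)^{it}u_\aaa$.
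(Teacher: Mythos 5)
Your construction is essentially the paper's own proof: fix a reference $\sgn$-normalized module $\phi_0$, use an additive character with $\hat\OO^\perp=\hat\OO$ to identify $X_{\phi_0}\cong\hat\OO\cong Y^0=\Gal(\K/H^+)\times_{\ohs}\hat\OO$, extend over the other components by the formula $\pi(\chi)=\aaa^{-1}\pi^0(\chi^\aaa)$ (your $[r(u_\aaa),y_\chi]$ is exactly this), obtain $I$-equivariance by construction and $\Gal(\K/K)$-equivariance from $\sigma_\aaa(\lambda)=\phi_\aaa(\lambda)$ combined with surjectivity of the Artin map (the paper does this at the finite levels $X_\mm\to Y_\mm$ and passes to inverse limits, also checking the matching computation on the $Y_\mm$ side that you defer to ``sign bookkeeping''), and deduce bijectivity from the matching of the two $\Pic^+(\OO)$-indexed decompositions rather than from compactness alone. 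The one slip to fix is a direction error: with your convention $\phi=\aaa^{-1}*\phi_0$, dualizing the surjection $\phi(\C_\infty)^{\tor}\to\phi_0(\C_\infty)^{\tor}$ yields an injection $X_{\phi_0}\hookrightarrow X_\phi$, not $X_\phi\hookrightarrow X_{\phi_0}$; you should instead write $\phi=\aaa*\phi_0$ and send $\chi\mapsto\chi\circ(\phi_0)_\aaa$, whose kernel contains $\phi_0[\aaa]$, so that the image lies in the annihilator $\aaa\hat\OO$ inside $X_{\phi_0}\cong\hat\OO$, after which division by $u_\aaa$ proceeds as you describe.
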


\bp
Fix a $\sgn$-normalized rank-one Drinfeld module $\phi^0$. Recall that the semigroup $\PP^+$ of principal ideals with positive generators acts trivially on $\phi^0$, so we have an action of $\PP^+$ on $X_{\phi^0}$. The semigroup~$\PP^+$ can be identified with $\OO^\times_+$. The latter semigroup acts on $\hat\OO$ by multiplication. Let us show first that there exists a $\PP^+$-equivariant continuous isomorphism $\pi^0\colon X_{\phi^0}\to\hat\OO$.

The $\OO$-module $\phi^0(\C_\infty)^{\tor}$ is isomorphic to~$K/\OO$. Indeed, if $\phi^0$ is defined by a lattice $\Lambda$, then $\phi^0(\C_\infty)^{\tor}\cong K\Lambda/\Lambda$. The lattice $\Lambda$ has the form~$\xi\aaa$ for some $\xi\in\C^\times_\infty$ and $\aaa\subset\OO$. Then $K\Lambda/\Lambda\cong K/\aaa$. Next, the closure of $\aaa$ in $\hat\OO$ has the form~$g\hat\OO$ for some $g\in\akf^*\cap\hat\OO$. Therefore
$$
K/\aaa=\akf/g\hat\OO\cong\akf/\hat\OO=K/\OO,
$$
and hence $\phi^0(\C_\infty)^{\tor}\cong K/\OO$.

It is well-known that the additive group $\akf$ is self-dual. Furthermore, the pairing on $\akf\times\akf$ can be defined as $(a,b)\mapsto\omega(ab)$ for a character $\omega$, and with an appropriate choice of $\omega$ the annihilator of $\hat\OO$ is $\hat\OO$. It follows that there exists an $\OO$-module isomorphism
$$
\widehat{K/\OO}=\widehat{\akf/\hat\OO}\cong\hat\OO,
$$
where the $\OO$-module structure on $\widehat{K/\OO}$ is defined by $a\chi=\chi(a\,\cdot)$.

Combining the isomorphisms $\phi^0(\C_\infty)^{\tor}\cong K/\OO$ and $\widehat{K/\OO}\cong\hat\OO$, we get the required $\OO$-module isomorphism
$$
\pi^0\colon X_{\phi^0}=\widehat{\phi^0(\C_\infty)^{\tor}}\to\hat\OO,
$$
where the $\OO$-module structure on $\widehat{\phi^0(\C_\infty)^{\tor}}$ is defined by $a\chi=\chi\circ\phi^0_a$. Since $\phi^0_{(a)}=\phi^0_a$ for positive~$a$, this isomorphism is $\PP^+$-equivariant.

\smallskip

Consider now the extension $H^+/K$ defined in the previous section. As we observed there, the Artin map $\akf^*\to\Gal(\K/K)$ gives an isomorphism of $\ohs$ onto $\Gal(\K/H^+)$, so the open subset $Y^0=\Gal(\K/H^+)\times_{\ohs}\hat\OO\subset Y_{\K,\{\infty\}}$ can be identified with $\hat\OO$. Hence the map $\pi^0$ can be considered to be a homeomorphism of $X_{\phi^0}$ onto $Y^0$.

We want to extend $\pi^0$ to a map $\pi\colon X\to Y_{\K,\{\infty\}}$. Let $\phi$ be a $\sgn$-normalized rank-one Drinfeld module, and $\chi\in X_\phi$ be a character. There exists an ideal $\aaa$ such that $\phi=\aaa*\phi^0$. Then $\chi^\aaa=\chi\circ\phi_\aaa^0\in X_{\phi^0}$. By definition of $\phi^0_\aaa$ the kernel of $\phi^0_\aaa\colon \phi^0(\C_\infty)^{\tor}\to\phi(\C_\infty)^{\tor}$ is exactly $\phi^0[\aaa]$. Therefore the kernel of~$\chi^\aaa$ contains $\phi^0[\aaa]$. Hence, under our isomorphism of $\phi^0(\C_\infty)^{\tor}$ with $K/\OO$ the kernel of $\chi^\aaa$ contains $\aaa^{-1}\OO/\OO$. Since the annihilator of $\aaa^{-1}\hat\OO$ in $\akf$ is $\aaa\hat\OO$, we conclude that $\pi^0(\chi^\aaa)\in\aaa\hat\OO$. In particular, $\pi^0(\chi^\aaa)\in\aaa Y_{\K,\{\infty\}}$, so we can define
$$\pi(\chi)=\aaa^{-1}\pi^0(\chi^\aaa)\in Y_{\K,\{\infty\}}.$$
Since $\pi^0$ is $\pplus$-equivariant, this definition does not depend on the choice of $\aaa$ (such that $\phi=\aaa*\phi^0$).

We have therefore extended $\pi^0$ to a continuous map $\pi\colon X\to Y_{\K,\{\infty\}}$. By construction this map is equivariant with respect to the ideal action.

\smallskip

Next let us show that $\pi$ is $\Gal(\K/K)$-equivariant. For every nonzero proper ideal $\mm\subset\OO$ consider the finite sets
$$
X_\mm=\bigsqcup_\phi\widehat{\phi[\mm]}\ \ \text{and}\ \ Y_\mm=\Gal(K_\mm/K)\times_{\ohs}(\hat\OO/\mm\hat\OO)=\Gal(K_\mm/K)\times_{(\OO/\mm)^*}(\OO/\mm).
$$
Similarly to $X$ and $Y_{\K,\{\infty\}}$ these sets carry actions of $\Gal(\K/K)$ and $I$; note, however, that if $\aaa$ is not prime to $\mm$, then the actions by $\aaa$ are defined by non-injective maps. Our isomorphism $\widehat{\phi^0(\C_\infty)^{\tor}}\simeq \hat\OO$ induces an isomorphism $\widehat{\phi^0[\mm]}\cong\hat\OO/\mm\hat\OO=\OO/\mm$. Since $\Gal(K_\mm/H^+)\simeq(\OO/\mm)^*$, we can identify $Y^0_\mm=\Gal(K_\mm/H^+)\times_{(\OO/\mm)^*}(\OO/\mm)\subset Y_\mm$ with $\OO/\mm$. Hence, similarly to the construction of $\pi$, by choosing for every $\phi$ an ideal $\aaa_\phi\in\imo$ such that $\phi=\aaa_\phi*\phi^0$ we can extend the isomorphism $\widehat{\phi^0[\mm]}\cong\OO/\mm$ to a map $\pi_\mm\colon X_\mm\to Y_\mm$. Clearly, the diagram
$$
\xymatrix{X\ar[r]^\pi\ar[d] & Y\ar[d]\\ X_\mm\ar[r]_{\pi_\mm} & Y_\mm},
$$
where the vertical arrows are the obvious quotient maps, is commutative. It follows that $\pi_\mm$ is independent of the choice of $\aaa_\phi$ and is $I$-equivariant.

Take a $\sgn$-normalized rank-one Drinfeld module $\phi$,  a character $\chi\in \widehat{\phi[\mm]}$, and $\aaa\in\imo$. For $\lambda\in(\aaa^{-1}*\phi)[\mm]=\sigma_\aaa^{-1}(\phi)[\mm]$ we have that
$$\sigma_\aaa(\lambda)=(\aaa^{-1}*\phi)_\aaa(\lambda),$$
so $\sigma_\aaa\chi=\chi\circ\sigma_\aaa=\chi\circ(\aaa^{-1}*\phi)_\aaa=\chi^\aaa$. Hence the Galois action of $\Gal(K_\mm/K)$ on $X_\mm$ corresponds to the ideal action of $\imo$ via the Artin map.

Under the map $\pi_\mm$ above, this Galois action is then transported to $Y_\mm$ by
$$\pi_\mm(\sigma_\aaa\chi)=\pi_\mm(\chi^\aaa)=\aaa\pi_\mm(\chi).$$

Recall now that the action of $\aaa$ on $Y_\mm$ is defined using the action $g(x,y)=(xr_{K_\mm/K}(g)^{-1},gy)$ on $\Gal(K_\mm/K)\times(\hat\OO/\mm\hat\OO)$, where $g\in\akf^*\cap\hat\OO$ is any element such that $\aaa=g\hat\OO\cap\OO$. Since $\aaa$ is prime to $\mm$, we can take $g$ such that $g_\pp=1$ for all primes $\pp$ dividing $\mm$. Then the action of $g$ on $\hat\OO/\mm\hat\OO$ is trivial and $r_{K_\mm/K}(g)=\sigma_\aaa$. Therefore, $\aaa\pi_\mm(\chi)=\sigma_\aaa\pi_\mm(\chi)$, so that
$$\pi_\mm(\sigma_\aaa\chi)=\sigma_\aaa\pi_\mm(\chi).$$
Since the Artin map $\imo\to\Gal(K_\mm/K)$ is surjective, we conclude that $\pi_\mm$ is $\Gal(K_\mm/K)$-equivariant.

Now note that $\displaystyle X=\lim_{\leftarrow}X_\mm$ and $\displaystyle Y=\lim_{\leftarrow}Y_\mm$. Hence the $\Gal(\K/K)$-equivariance of $\pi\colon X\to Y$ follows from the $\Gal(K_\mm/K)$-equivariance of $\pi_\mm$.

\smallskip

It remains to show that $\pi$ is a homeomorphism. The space $X$ is the disjoint union of the open sets~$g X_{\phi^0}$, where $g$ runs over representatives of $\Gal(\K/K)/\Gal(\K/H^+)\cong\Gal(H^+/K)\cong\Pic^+(\OO)$. Similarly, $Y_{\K,\{\infty\}}$ is the disjoint union of the sets $g Y^0$. Since $\pi$ is $\Gal(\K/K)$-equivariant and defines a homeomorphism of $X_{\phi^0}$ onto $Y^0$, we conclude that $\pi$ is a homeomorphism.
\ep

\begin{remark}
The map $\pi$ depends on the choice of a $\sgn$-normalized rank-one Drinfeld module $\phi^0$, an $\OO$-module isomorphism  $\phi^0(\C_\infty)^{\tor}\cong K/\OO$, and a character $\omega$ of $\akf$ defining a pairing on $\akf\times\akf$ such that $\hat\OO^\perp=\hat\OO$. It is not difficult to see that if $\pi'\colon X\to Y_{\K,\{\infty\}}$ is another $\Gal(\K/K)$- and $I$-equivariant homeomorphism, e.g.~one constructed using a different choice of the above three ingredients, then $\pi'(\chi)=g\pi(\chi)$ for a uniquely defined $g\in\Gal(\K/K)$.
\end{remark}

Applied to the system $(A_{\K,\{\infty\}},\sigma)$, our Theorem~\ref{tphase} summarizes \cite[Theorems~4.3.10,~4.4.15]{Ja}. Furthermore, by Theorem~\ref{ttype} and Corollary~\ref{cconstantf}, the type of the unique KMS$_\beta$-state of Jacob's system for $\beta\in(0,1]$ is III$_{q^{-\beta d_\infty}}$. This corrects a mistake in \cite[Theorem~4.5.8]{Ja}, which asserts that the type is III$_{q^{-\beta}}$, a mistake partially caused by a wrong formulation of the Chebotarev density theorem.\footnote{The computation in \cite{Ja} relies on this theorem, but in a way that is different from ours. Unfortunately, the strategy in \cite{Ja} does not work even for $d_\infty=1$, when the formulation of the Chebotarev density theorem becomes correct. The mistake is in the proof of crucial Lemma~4.5.5, the same lemma where the Chebotarev density theorem is used, which does not take into account that the elements $\mu_\pp$ and $\mu_\qq$ do not belong to $M[\dd]$. In fact, the assertion of that lemma is not correct, it can be shown that already the center of $M[\dd]$ has elements that are not $\Gal(K_\dd/k)$-invariant.}

\medskip

Another approach to defining a Bost-Connes system for function fields is that of Consani and Marcolli \cite{CM}. Their setting is different, as they develop a theory of dynamical systems for algebras of $\C_\infty$-valued functions. However, these algebras arise from groupoids, which makes it natural to consider the relationship between these groupoids and the ones we consider in this paper.

\smallskip

A one-dimensional $K$-lattice in $K_\infty$ is a pair $(\Lambda,\varphi)$, where $\Lambda\subset K_\infty$ is a rank-one lattice and $\varphi\colon K/\OO\to K\Lambda/\Lambda$ is an $\OO$-module map. Two one-dimensional $K$-lattices $(\Lambda_1,\varphi_1)$ and $(\Lambda_2,\varphi_2)$ are called commensurable, if $\Lambda_1$ and $\Lambda_2$ are commensurable (equivalently, $K\Lambda_1=K\Lambda_2$) and the maps $K/\OO\to K\Lambda_i/(\Lambda_1+\Lambda_2)$ defined by $\varphi_1$ and $\varphi_2$ coincide.

The one-dimensional $K$-lattices in $K_\infty$ can be parametrized (see \cite{CM},\cite{LLN}) by the set
$$K_\infty^\times\times_{K^\times}\akf^*\times_{\ohs}\hat\OO.$$
There is a partial action of $\A_{K,f}^*$ on this space given by
$$g(\xi,x,y)=(\xi,xg^{-1},gy)$$
as long as $gy\in\hat\OO$, and which is undefined if this is not the case. This action descends to a partial action of fractional ideals of $\OO$, and the corresponding orbit equivalence relation is exactly the relation of commensurability. The action defines a groupoid $\tilde\GG$ with the set of one-dimensional $K$-lattices in $K_\infty$ as its object space.

Let $\GG_{CM}$ be the quotient groupoid obtained by identifying elements $(\xi,x,y)$ and $(\zeta\xi,x,y)$ in the object space for $\zeta\in K_\infty^\times$. This is the main groupoid considered in \cite{CM}.
It has the object space
$$\GG_{CM}^0=K^\times\backslash\akf^*\times_{\ohs}\hat\OO
\simeq\Gal(K^{\ab,\infty}/K)\times_{\ohs}\hat\OO=Y_{K^{\ab,\infty},\{\infty\}}.$$
This identification respects the actions of the semigroup $I=\D^+_{\{\infty\}}$ of ideals, so the groupoid considered in \cite{CM} gives rise to the dynamical system $(A_{K^{\ab,\infty},\{\infty\}},\sigma)$. As for Jacob's system above, the type of the unique KMS$_\beta$-state for this system for $\beta\in (0,1]$ is III$_{q^{-\beta d_\infty}}$.

\smallskip

Theorem~\ref{tJ} clarifies the relation between the groupoids $\GG_{CM}$ and $\GG_J$ of Consani-Marcolli and Jacob: $\GG_{CM}$ is isomorphic to the quotient of $\GG_J$ by the action of $\Gal(\K/K^{\ab,\infty})\cong K^\times/K^\times_+\cong\F^\times_\infty$. Also note that in the case of the Bost-Connes system for $\Q$ we only divide out by scaling by positive reals. The natural analogue of the positive reals in our setting is the subgroup $K_\infty^+\subset K^\times_\infty$ of positive elements. The groupoid $K_\infty^+\backslash\tilde\GG$ has the object space
$$K_\infty^+\backslash K_\infty^\times\times_{K^\times}\akf^*\times_{\ohs}\hat\OO\simeq \F_\infty^\times\times_{K^\times}\akf^*\times_{\ohs}\hat\OO\simeq\Gal(\K/K)\times_{\ohs}\hat\OO,$$
and the ideal action is identical to the one on $Y_{\K,\{\infty\}}$. Therefore, the modification of the construction of Consani-Marcolli obtained by considering $K$-lattices in $K_\infty$ up to scaling by positive elements gives rise to a groupoid isomorphic to Jacob's groupoid $\GG_J$.

\medskip

We finish by reformulating the classification of KMS$_\beta$-states of $(A_{\K,\{\infty\}},\sigma)$ for $\beta>0$ in terms of relatively invariant measures on finite adeles. For number fields such a reformulation was used in~\cite{N2,N3}.

\begin{theorem}
Consider measures $\mu$ on $\akf$ such that
\begin{equation} \label{et10}
\mu(\hat\OO)=1\ \ \text{and}\ \ \mu(a\,\cdot)=N(a)^{-\beta}\mu\ \ \text{for all}\ \ a\in K^\times_+,
\end{equation}
where $N(a)=N(a\OO)=q^{-d_\infty\ord_\infty(a)}$. Then
\enu{i} for every $0<\beta\le1$ there exists a unique measure $\mu_\beta$ satisfying~\eqref{et10}; furthermore, the action of~$K^\times_+$ on $(\Z\times\akf,\lambda\times\mu_\beta)$ given by $a(n,x)=(\ord_{\infty}(a)+n,ax)$, where $\lambda$ is the counting measure, is ergodic;
\enu{ii} for every $\beta>1$ and $g\in\akf^*$ there exists a unique measure  satisfying \eqref{et10} that is concentrated on $K^\times_+g\subset\akf^*$; this way we get, for every $\beta>1$, a one-to-one correspondence between extremal measures satisfying \eqref{et10} and points of the compact group $\akf^*/K^\times_+\cong\Gal(\K/K)$.
\end{theorem}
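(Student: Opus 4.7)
\emph{Proof plan.} The strategy is to reduce the theorem to Theorem~\ref{tphase} by establishing a one-to-one correspondence between measures $\mu$ on $\akf$ satisfying \eqref{et10} and measures $\tilde\mu$ on $X_{\K,\{\infty\}}$ defining KMS$_\beta$-states on $A_{\K,\{\infty\}}$. Once this bijection is set up, parts~(i) and~(ii) will follow from parts~(ii) and~(iii) of Theorem~\ref{tphase}.

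For existence of $\mu_\beta$ in (i), I would take $\mu_\beta:=\prod_{\pp\ne\infty}\mu_{\beta,\pp}$, the product of the local measures introduced in the proof of Theorem~\ref{tphase}. Since $\mu_{\beta,\pp}(\OO_\pp)=1$ for every $\pp$, one has $\mu_\beta(\hat\OO)=1$. For $a\in K^\times_+$, the local scaling $\mu_{\beta,\pp}(a\,\cdot)=\|a\|_\pp^\beta\,\mu_{\beta,\pp}$ together with the product formula $\prod_{\pp\ne\infty}\|a\|_\pp=N(a)^{-1}$ gives $\mu_\beta(a\,\cdot)=N(a)^{-\beta}\mu_\beta$, so \eqref{et10} holds.

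For the bijection, I would use the Artin isomorphism $\Gal(\K/K)\cong\akf^*/K^\times_+$ to identify
$$
X_{\K,\{\infty\}}\cong(\akf^*\times\akf)/(K^\times_+\times\ohs),
$$
where $K^\times_+$ acts by multiplication on the first factor and $\ohs$ acts diagonally by $g(x,y)=(xg^{-1},gy)$. A KMS$_\beta$-measure $\tilde\mu$ on the quotient then lifts (with a normalization factor of $|\Pic^+(\OO)|^{-1}$ absorbed in the Haar measure on $\akf^*$) to a product $\lambda^*\times\mu$ on $\akf^*\times\akf$, where $\lambda^*$ is a Haar measure on $\akf^*$ and $\mu$ is a measure on $\akf$ satisfying \eqref{et10}. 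The $\D_{\{\infty\}}$-scaling of $\tilde\mu$ corresponds, via the exact sequence $1\to\PP^+\to\D_{\{\infty\}}\to\Pic^+(\OO)\to1$ and the identification $\PP^+\cong K^\times_+$, to the $K^\times_+$-scaling of $\mu$. Uniqueness in (i) now follows from Theorem~\ref{tphase}(ii); for (ii), Theorem~\ref{tphase}(iii) parametrizes the extremal KMS$_\beta$-states for $\beta>1$ by $Y^0_{\K,\{\infty\}}\cong\akf^*/K^\times_+$, with the measure attached to $w$ supported on the $\D^+_{\{\infty\}}$-orbit of $w$ in $X_{\K,\{\infty\}}$, which via the bijection becomes a measure on $\akf$ concentrated on the coset $K^\times_+g\subset\akf^*$ for any lift $g$ of $w$.

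For the ergodicity claim in (i), extremality of the unique KMS$_\beta$-state makes $\D_{\{\infty\}}$ act ergodically on $(X_{\K,\{\infty\}},\tilde\mu_\beta)$. A Maharam-type argument, together with the transitivity of $\D_{\{\infty\}}$ on $\Pic^+(\OO)=\D_{\{\infty\}}/\PP^+$ with stabilizer $\PP^+\cong K^\times_+$, transfers this to the ergodicity of $K^\times_+$ on $(\Z\times\akf,\lambda\times\mu_\beta)$, where $\Z$ parametrizes $\ord_\infty$-values via $\deg((a))=-d_\infty\ord_\infty(a)$. The main difficulty will be in setting up the bijection — verifying that the mass normalizations $\tilde\mu(Y_{\K,\{\infty\}})=1$ and $\mu(\hat\OO)=1$ and the scaling conditions are matched correctly, with the finite quotient $\Pic^+(\OO)=\D_{\{\infty\}}/\PP^+$ being the obstruction to a naive identification of the $\D_{\{\infty\}}$- and $K^\times_+$-scalings.
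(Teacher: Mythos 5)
Your overall strategy --- reducing to Theorem~\ref{tphase} via a correspondence between measures on $\akf$ satisfying \eqref{et10} and scaling measures on $X_{\K,\{\infty\}}$ --- is exactly the paper's, and your explicit product construction of $\mu_\beta$ is fine. But the mechanism you propose for the bijection does not work. You claim that a KMS$_\beta$-measure $\tilde\mu$ lifts to a \emph{product} $\lambda^*\times\mu$ on $\akf^*\times\akf$ with $\lambda^*$ Haar. For $\beta>1$ the extremal measures are atomic, concentrated on countable $\D^+_{\{\infty\}}$-orbits, and their lifts are supported on graphs of maps $\ohs\to\Gal(\K/K)$ rather than on product sets; they are not of the form $\lambda^*\times\mu$. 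Even for $\beta\le1$, a correspondence that only sees product-type measures cannot deliver uniqueness among \emph{all} measures satisfying \eqref{et10}. The correct device is to identify $\akf$ with the clopen subset $X^0=\Gal(\K/H^+)\times_{\ohs}\akf$ of $X_{\K,\{\infty\}}$ (using $\Gal(\K/H^+)\cong\ohs$): this set is $\PP^+$-invariant, satisfies $\aaa X^0\cap X^0\ne\emptyset$ only for $\aaa\in\PP^+$, and its $I$-translates cover $X_{\K,\{\infty\}}$, so every measure on $\akf$ with $\mu(a\,\cdot)=N(a)^{-\beta}\mu$ for $a\in K^\times_+$ extends \emph{uniquely} to a measure on $X_{\K,\{\infty\}}$ scaling correctly under all of $I$, and conversely restriction recovers $\mu$. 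That gives the bijection for arbitrary measures, atomic ones included, and handles the $\Pic^+(\OO)$ "obstruction" you correctly anticipated.

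The more serious gap is the ergodicity claim in (i). Ergodicity of $K^\times_+$ on $\Z\times\akf$ is equivalent to ergodicity of the subgroup $\{a\in K^\times_+\mid \ord_\infty(a)=0\}$ on $(\akf,\mu_\beta)$, i.e.\ of the kernel of the Radon--Nikodym cocycle; this is a strictly stronger statement than ergodicity of $K^\times_+$ on $(\akf,\mu_\beta)$, which is all that extremality of the KMS state (plus your induction over $\Pic^+(\OO)$) can give. No "Maharam-type transfer" bridges this: the skew product by $\ord_\infty$ is ergodic precisely when the ratio set of the action exhausts the full range $q^{\beta d_\infty\Z}$ of the cocycle, i.e.\ when the action is of type III$_{q^{-\beta d_\infty}}$ rather than of type III$_{q^{-\beta d_\infty k}}$ for some $k>1$ or III$_0$. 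This is the content of Theorem~\ref{ttype} combined with Corollary~\ref{cconstantf} (the constant field of $\K$ is $\F_{q^{d_\infty}}$) and Proposition~\ref{pergodic}, and its proof ultimately requires the Chebotarev density theorem. Your plan omits this input entirely, and without it the second statement of (i) is unproved.
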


\bp As we already remarked in the proof of Theorem~\ref{tphase}, classifying KMS$_\beta$-states of $(A_{\K,\{\infty\}},\sigma)$ for $\beta\ne0$ is equivalent to classifying measures $\nu$ on $X_{\K,\{\infty\}}=\Gal(\K/K)\times_{\ohs}\akf$ such that $\nu(Y_{\K,\{\infty\}})=1$ and $\nu(\aaa\,\cdot)=N(\aaa)^{-\beta}\nu$ for all $\aaa\in I$. Consider the subset $X^0=\Gal(\K/H^+)\times_\ohs\akf\subset X_{\K,\{\infty\}}$ and note that $X^0$ can be identified with $\akf$, since $\Gal(\K/H^+)\cong\ohs$. The set $X^0$ is $\PP^+$-invariant, $IX^0=X_{\K,\{\infty\}}$ and $\aaa X^0\cap X^0\ne\emptyset$ only for $\aaa\in\PP^+$. It is easy to see that this implies that any Radon measure $\mu$ on $X^0$ such that $\mu(\aaa\,\cdot)=N(\aaa)^{-\beta}\mu$ for all $\aaa\in\PP^+$ extends uniquely to a Radon measure $\nu$ on $X_{\K,\{\infty\}}$ such that $\nu(\aaa\,\cdot)=N(\aaa)^{-\beta}\nu$ for all $\aaa\in I$. It follows that classifying measures~$\nu$ on $X_{\K,\{\infty\}}$ as above is equivalent to classifying measures on $\akf$ satisfying~\eqref{et10}. Therefore all the statements of theorem, with the exception of the second part of (i), are indeed a reformulation of Theorem~\ref{tphase}.

Since the unique KMS$_\beta$-state of $(A_{\K,\{\infty\}},\sigma)$ for $\beta\in (0,1]$ is of type III$_{q^{-\beta d_\infty}}$, the action of $K^\times_+$ on $(\akf,\mu_\beta)$ is of type III$_{q^{-\beta d_\infty}}$. At the same time the norms of elements of $K^\times$ are integral powers of $q^{d_\infty}$. By Proposition~\ref{pergodic} we conclude that the action of the subgroup of elements of $K^\times_+$ of norm one, that is, of elements of order zero at infinity, on $(\akf,\mu_\beta)$ is ergodic. This is equivalent to the second statement in (i).
\ep

\bigskip


\appendix

\section{Ratio set}\label{a1}

Let $(X,\mu)$ be a standard measure space with $\sigma$-finite measure $\mu$, and $\RR\subset X\times X$ be a non-singular countable measurable equivalence relation on $(X,\mu)$~\cite{FM1}. The non-singularity means that if $A\subset X$ is a set of measure zero, then the minimal $\RR$-invariant subset $X$ containing $A$ also has measure zero. Thanks to this assumption there is a measurable (with respect to a measure class on~$\RR$ whose projection onto $X$ coincides with $[\mu]$) map $c_\mu\colon\RR\to\R^*_+$ such that $c_\mu(x,z)=c_\mu(x,y)c_\mu(y,z)$ if $x\sim_\RR y\sim_\RR z$, and for any measurable bijective map $T\colon A\to B$ with graph in $\RR$ we have
$$
\frac{dT^{-1}\mu}{d\mu}(x)=c_\mu(x,Tx)\ \ \text{for}\ \ \mu\text{-a.e.}\ \ x\in A,
$$
where $T^{-1}\mu$ is the measure on $A$ defined by $(T^{-1}\mu)(Z)=\mu(TZ)$. The map $c_\mu$ is called the Radon-Nikodym cocycle.

By definition~\cite{kr,FM1} the ratio set $r(\RR,\mu)$ is the intersection of the essential ranges of the restrictions of $c_\mu$ to $\RR\cap(Z\times Z)$ for all measurable subsets $Z\subset X$ of positive measure. By the essential range we mean the smallest closed set which contains the value of almost every point.

Equivalently we can say that the ratio set is defined as follows.
For a measurable set $Z\subset X$ of positive measure and $\eps>0$ denote by $r_{Z,\eps}(\RR,\mu)$ the set of numbers $\lambda\ge0$ such that there exist measurable subsets $A,B\subset Z$ of positive measure and a measurable bijective map $T\colon A\to B$ with graph in $\RR$ such that
$|c_\mu(x,Tx)-\lambda|<\eps$ for all $x\in A$. Then
$$
r(\RR,\mu)=\bigcap_{\mu(Z)>0,\ \eps>0}r_{Z,\eps}(\RR,\mu).
$$
Denote by $\bar r_{Z,\eps}(\RR,\mu)$ the subset of $\lambda\in r_{Z,\eps}(\RR,\mu)$ for which we can choose $A$ and $B$ as above such that $\mu(Z\setminus(A\cup B))=0$. Using Zorn's lemma it is easy to see that $r(\RR,\mu)\subset \bar r_{Z,\eps}(\RR,\mu)$. Furthermore, since $c_\mu(x,x)=1$ for all $x$, for every $\lambda\in r(\RR,\mu)\setminus\{1\}$ we can choose the sets $A$ and $B$ such that $A\cap B=\emptyset$.

Clearly,  $r(\RR,\mu)$ is a closed subset of $[0,+\infty)$. Furthermore, $r(\RR,\mu)\setminus\{0\}$ is a subgroup of $\R^*_+$. If we replace~$\mu$ by an equivalent measure, the ratio set remains the same.

\begin{lemma} \label{lapp}
For every $\lambda>0$, there exist $\eps_0>0$ and $\delta_0>0$ such that if $Z$ and $Z'$ are measurable subsets of $X$ of finite positive measure, $0<\eps<\eps_0$, $0<\delta<\delta_0$, $\lambda\in\bar r_{Z',\eps}(\RR,\mu)$ and $\mu(Z\Delta Z')<\delta\mu(Z')$, then $\lambda\in r_{Z,\eps}(\RR,\mu)$.
\end{lemma}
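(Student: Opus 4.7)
The plan is to pass from a witnessing triple $(A, B, T)$ for $\lambda \in \bar r_{Z',\eps}(\RR,\mu)$ to one for $\lambda \in r_{Z,\eps}(\RR,\mu)$ by restricting to the portion that lies in $Z$. Concretely, using the hypothesis I would pick measurable $A, B \subset Z'$ with $\mu(Z' \setminus (A \cup B)) = 0$ and a measurable bijection $T \colon A \to B$ with graph in $\RR$ such that $|c_\mu(x, Tx) - \lambda| < \eps$ for all $x \in A$, and then set
$$A' = A \cap Z \cap T^{-1}(Z), \qquad B' = T(A'), \qquad T' = T|_{A'}.$$
These automatically satisfy $A', B' \subset Z$, the graph of $T'$ lies in $\RR$, and the cocycle bound $|c_\mu(x, T'x) - \lambda| < \eps$ is inherited pointwise on $A'$. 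Everything thus reduces to ensuring $\mu(A') > 0$ for a suitable choice of $\eps_0, \delta_0$.

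To control $\mu(A \setminus A')$, I would use the inclusion
$$A \setminus A' \subset \bigl(A \cap (Z' \setminus Z)\bigr) \cup T^{-1}\bigl(B \cap (Z' \setminus Z)\bigr),$$
valid because $A, T(A) \subset Z'$. The first piece has measure at most $\mu(Z' \setminus Z) \leq \mu(Z \Delta Z') < \delta \mu(Z')$. For the second, the identity $\mu(TC) = \int_C c_\mu(x, Tx)\, d\mu(x)$, combined with $c_\mu(\cdot, T\cdot) > \lambda - \eps$ on $A$, gives $\mu(T^{-1}D) \leq \mu(D)/(\lambda - \eps)$ for any measurable $D \subset B$, provided $\eps < \lambda$. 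Applied to $D = B \cap (Z' \setminus Z)$ this bounds the second piece by $\delta \mu(Z')/(\lambda - \eps)$, so altogether
$$\mu(A \setminus A') < \delta \mu(Z')\left(1 + \frac{1}{\lambda - \eps}\right).$$

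For the matching lower bound on $\mu(A)$, I would write $\mu(Z') = \mu(A) + \mu(B) = \mu(A) + \int_A c_\mu(x, Tx)\, d\mu(x) \leq (1 + \lambda + \eps)\mu(A)$. I would then fix any $\eps_0 \in (0, \lambda/2)$ and choose $\delta_0 > 0$ small enough that
$$\delta_0 (1 + \lambda + \eps_0)\left(1 + \frac{1}{\lambda - \eps_0}\right) < 1;$$
this forces $\mu(A \setminus A') < \mu(A)$, so $\mu(A') > 0$, and $(A', B', T')$ witnesses $\lambda \in r_{Z,\eps}(\RR, \mu)$. I do not anticipate a real obstacle; the argument is bookkeeping with the Radon-Nikodym cocycle, and the only mild subtlety, which is also the reason the admissible $\delta_0$ shrinks as $\lambda \to 0$, is that one must invoke the cocycle bound when pulling the defect $B \cap (Z' \setminus Z)$ back through $T$.
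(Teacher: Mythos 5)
Your proof is correct and is essentially the paper's argument run forwards instead of by contradiction: the paper assumes the set $Z\cap A\cap T^{-1}(Z\cap B)$ (your $A'$) is null and derives a contradiction from the same cocycle estimates, while you directly bound $\mu(A\setminus A')$ from above and $\mu(A)$ from below. The only blemish is the equality $\mu(Z')=\mu(A)+\mu(B)$, which should be $\mu(Z')\le\mu(A)+\mu(B)$ since $A$ and $B$ need not be disjoint, but this is the direction you actually use, so nothing is affected.
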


\bp Let $A,B\subset Z'$ and $T\colon A\to B$ be as required in the definition of $\bar r_{Z',\eps}(\RR,\mu)$. If
$$
\mu(Z\cap A\cap T^{-1}(Z\cap B))>0,
$$
then clearly $\lambda\in r_{Z,\eps}(\RR,\mu)$. So assume that $Z\cap A\cap T^{-1}(Z\cap B)$ has measure zero. Then
\begin{equation} \label{eratio0}
\mu(Z\cap A)+\mu(T^{-1}(Z\cap B))\le\mu(A).
\end{equation}
For every measurable subset $\Omega\subset A$ we have
\begin{equation} \label{eratio1}
|\mu(T\Omega)-\lambda \mu(\Omega)|\le\eps\mu(\Omega).
\end{equation}
In particular,
$$
|\mu(Z\cap B)-\lambda \mu(T^{-1}(Z\cap B))|\le \eps\mu(T^{-1}(Z\cap B))\le\eps\mu(A).
$$
Multiplying \eqref{eratio0} by $\lambda$ we then get
$$
\lambda\mu(Z\cap A)+\mu(Z\cap B)\le(\lambda+\eps)\mu(A).
$$
Since $A,B\subset Z'$ and $\mu(Z\Delta Z')<\delta\mu(Z')$, the above inequality yields
$$
\lambda \mu(A)+ \mu(B)\le (\lambda+\eps)\mu(A)+\delta(\lambda+1)\mu(Z'),
$$
hence
\begin{equation} \label{eratio3}
\mu(B)\le \eps \mu(A)+\delta(\lambda+1)\mu(Z')\le(\eps+\delta(\lambda+1))\mu(Z').
\end{equation}

On the other hand, from \eqref{eratio0} we also get
$$
|\mu(B)-\lambda \mu(A)|\le\eps \mu(A).
$$
Hence $\mu(B)\ge(\lambda-\eps)\mu(A)$ and
$$
\mu(Z')\le \mu(A)+\mu(B)\le \mu(A)+(\lambda-\eps)\mu(A),
$$
so that
$$
\mu(B)\ge\frac{\lambda-\eps}{1+\lambda+\eps}\mu(Z').
$$
This inequality contradicts \eqref{eratio3}, if $\eps+\delta(\lambda+1)<({\lambda-\eps})/({1+\lambda+\eps})$.
\ep

\begin{proposition} \label{pproduct}
Let $\RR$ be a non-singular countable measurable equivalence relation on a standard measure space $(X,\mu)$. Assume $(Y,\nu)$ is another standard measure space, and define an equivalence relation $\RR\times\id$ on $X\times Y$ such that $(x_1,y_1)\sim_{\RR\times\id}(x_2,y_2)$ if and only if $x_1\sim_\RR x_2$ and $y_1=y_2$. Then $$r(\RR\times\id,\mu\times\nu)\setminus\{0\}=r(\RR,\mu)\setminus\{0\}.$$
\end{proposition}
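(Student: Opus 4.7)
The plan hinges on a Fubini-type identification of the Radon-Nikodym cocycle of $\RR\times\id$. Any measurable bijection $T$ whose graph lies in $\RR\times\id$ has the form $T(x,y)=(S(x,y),y)$ with $S(x,y)\sim_\RR x$, and for $(\mu\times\nu)$-a.e.\ $(x,y)\in A$ one has $c_{\mu\times\nu}((x,y),T(x,y))=c_\mu(x,S(x,y))$. Granting this, both inclusions amount to short transfers between $X$ and $X\times Y$.

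For $r(\RR,\mu)\setminus\{0\}\subseteq r(\RR\times\id,\mu\times\nu)$, I fix $\lambda$ in the left side, a measurable $W\subset X\times Y$ of positive finite measure, and $\eps>0$. The first step is to approximate $W$ from inside by a rectangle: since rectangles generate the product $\sigma$-algebra, for any preassigned $\delta>0$ one can find measurable $Z\subset X$ and $Y_0\subset Y$ of positive finite measure with $(\mu\times\nu)\bigl((Z\times Y_0)\setminus W\bigr)<\delta\,\mu(Z)\nu(Y_0)$. Using the refinement $\bar r_{Z,\eps/2}(\RR,\mu)$ recorded before Lemma~\ref{lapp}, I choose $A_0,B_0\subset Z$ and a bijection $T_0\colon A_0\to B_0$ with graph in $\RR$, $\mu(Z\setminus(A_0\cup B_0))=0$, and $|c_\mu(x,T_0x)-\lambda|<\eps/2$. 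Taking products with $\id_{Y_0}$ then shows $\lambda\in\bar r_{Z\times Y_0,\eps/2}(\RR\times\id,\mu\times\nu)$. Setting $W'=W\cap(Z\times Y_0)\subset Z\times Y_0$, one has $(\mu\times\nu)(W'\Delta(Z\times Y_0))<\delta\,(\mu\times\nu)(Z\times Y_0)$, so Lemma~\ref{lapp} applied with $Z\times Y_0$ in the role of $Z'$ and $W'$ in the role of $Z$ yields $\lambda\in r_{W',\eps}(\RR\times\id,\mu\times\nu)\subseteq r_{W,\eps}(\RR\times\id,\mu\times\nu)$. Since $W$ and $\eps$ were arbitrary, $\lambda$ lies in the full ratio set of $\RR\times\id$.

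For the reverse inclusion I take $\lambda\in r(\RR\times\id,\mu\times\nu)\setminus\{0\}$, $Z\subset X$ of positive finite measure, and $\eps>0$. Pick any $Y_0\subset Y$ of positive finite measure; the hypothesis applied to $Z\times Y_0$ produces $A,B\subset Z\times Y_0$ and a bijection $T(x,y)=(S(x,y),y)$ with graph in $\RR\times\id$ and $|c_\mu(x,S(x,y))-\lambda|<\eps$ for $(\mu\times\nu)$-a.e.\ $(x,y)\in A$. By Fubini there is a positive-$\nu$-measure set of $y\in Y_0$ for which $A_y$ has positive $\mu$-measure and the inequality holds for $\mu$-a.e.\ $x\in A_y$; for any such $y_0$, the slice map $S(\cdot,y_0)\colon A_{y_0}\to B_{y_0}$ is a measurable bijection into $Z$ with graph in $\RR$ witnessing $\lambda\in r_{Z,\eps}(\RR,\mu)$. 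The only real technical inputs are the cocycle identification (immediate from Fubini) and the inner-rectangle approximation of $W$; the rest of the argument is the recycling of Lemma~\ref{lapp} in one direction and a slice extraction in the other.
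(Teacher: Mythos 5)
Your proof is correct and follows essentially the same route as the paper: the nontrivial inclusion is obtained by producing witnesses on rectangles via $\bar r_{Z,\eps}$ and then transferring to arbitrary sets with Lemma~\ref{lapp}, while the reverse inclusion (which the paper dismisses as obvious) is handled by your slice argument. The only cosmetic difference is that you approximate $W$ from inside by a single rectangle of high relative density --- which requires a small averaging/pigeonhole step over a disjoint union of rectangles that you leave implicit --- whereas the paper first extends the $\bar r_{Z,\eps}$ property to all sets in the algebra generated by rectangles and then applies Lemma~\ref{lapp}.
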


\bp The inclusion $r(\RR\times\id,\mu\times\nu)\subset r(\RR,\mu)$ is obvious. Take $\lambda\in r(\RR,\mu)\setminus\{0\}$ and $\eps>0$. Since for every measurable set $U\subset X$ of positive measure we have $\lambda\in \bar r_{U,\eps}(\RR,\mu)$, we get  $\lambda\in \bar r_{Z,\eps}(\RR\times\id,\mu\times\nu)$ for every measurable  rectangular set $Z=U\times V$ of positive measure. But then the same is true for every set $Z$ of positive measure in the algebra generated by measurable rectangular sets. Since every measurable set in $X\times Y$ can be approximated by sets in this algebra, by the previous lemma we conclude that $\lambda\in r_{Z,\eps}(\RR\times\id,\mu\times\nu)$ for every measurable set $Z$ of positive measure.
\ep

Another related immediate consequence of Lemma~\ref{lapp} is the following continuity result.

\begin{proposition} \label{psequence}
Let $\RR$ be the orbit equivalence relation defined by an action of a countable group~$\Gamma$ on a standard measure space $(X,\mu)$ by non-singular transformations. Assume $\{\xi_n\}^\infty_{n=1}$ is an increasing sequence of $\Gamma$-invariant measurable partitions such that $\vee_n\xi_n$ is the partition into points. Also assume that the measure $\mu_n$ defined by~$\mu$ on $X_n=X/\xi_n$ is $\sigma$-finite and the Radon-Nikodym cocycle~$c_\mu$ is $\xi_n$-measurable. Let $\RR_n$ be the orbit equivalence relation defined by the action of $\Gamma$ on $X_n$. Then
$$
r(\RR,\mu)\setminus\{0\}=\bigcap_nr(\RR_n,\mu_n)\setminus\{0\}.
$$
\end{proposition}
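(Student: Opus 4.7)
\textbf{Proof plan for Proposition~\ref{psequence}.}
Denote by $p_n\colon X\to X_n$ the quotient maps and by $\mathcal{F}_n$ the $\sigma$-algebra $p_n^{-1}\mathcal{B}(X_n)$. The hypothesis $\vee_n\xi_n=\text{points}$ means that $\bigcup_n\mathcal{F}_n$ generates $\mathcal{B}(X)$ modulo $\mu$-nullsets, so indicators of sets of finite measure are approximable in $L^1(\mu)$ by indicators of $\mathcal{F}_n$-measurable sets. The $\Gamma$-invariance of $\xi_n$ ensures that the action descends to $X_n$ (so $\RR_n$ makes sense), and the $\xi_n$-measurability of $c_\mu$ guarantees that $c_\mu(x,gx)=c_{\mu_n}(p_n(x),g\,p_n(x))$ for every $g\in\Gamma$ and $\mu$-a.e.~$x$. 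This identity is the bridge we will use in both directions.

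For the inclusion $r(\RR,\mu)\setminus\{0\}\subset r(\RR_n,\mu_n)\setminus\{0\}$, fix $n$ and $\lambda\in r(\RR,\mu)$, $\lambda>0$. Take any measurable $Z_n'\subset X_n$ of positive measure and $\eps>0$; set $Z=p_n^{-1}(Z_n')$, which has positive $\mu$-measure. By definition there exist $A,B\subset Z$ of positive measure and a Borel isomorphism $T\colon A\to B$ with graph in $\RR$ with $|c_\mu(x,Tx)-\lambda|<\eps$ on~$A$. Using the countability of $\Gamma$, decompose $A=\bigsqcup_{g\in\Gamma}\{x:Tx=gx\}$ and retain a piece $A_g$ of positive measure. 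Then $p_n(A_g)\subset Z_n'$ has positive $\mu_n$-measure (since $\mu_n$ is the push-forward of $\mu$), the action of $g$ maps it bijectively to $g\,p_n(A_g)\subset Z_n'$, and the cocycle identity above transfers the $\eps$-condition.

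For the converse inclusion, fix $\lambda>0$ lying in every $r(\RR_n,\mu_n)$, a set $Z\subset X$ with $0<\mu(Z)<\infty$, and $\eps>0$. Let $\eps_0,\delta_0$ be the thresholds supplied by Lemma~\ref{lapp} for $\lambda$, and choose $\eps'<\eps_0$. Using the density of $\bigcup_n\mathcal{F}_n$ noted above, pick $n$ large and an $\mathcal{F}_n$-measurable set $Z_n=p_n^{-1}(Z_n')$ with $\mu(Z\Delta Z_n)<\delta_0\,\mu(Z_n)$. Since $\lambda\in r(\RR_n,\mu_n)$ and $\mu_n(Z_n')>0$, the remark just before Lemma~\ref{lapp} gives $A_n',B_n'\subset Z_n'$ and a Borel isomorphism $T_n'\colon A_n'\to B_n'$ with graph in $\RR_n$ such that $|c_{\mu_n}(y,T_n'y)-\lambda|<\eps'$ on $A_n'$ and $\mu_n\bigl(Z_n'\setminus(A_n'\cup B_n')\bigr)=0$. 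Decomposing $T_n'=\bigsqcup_g g\bigr|_{A_n'^{(g)}}$ into countably many group pieces and lifting each piece pointwise through $p_n$ (using $\Gamma$-invariance of $\xi_n$) produces a Borel isomorphism $T\colon A\to B$ with graph in $\RR$, where $A=p_n^{-1}(A_n')$ and $B=p_n^{-1}(B_n')$; the relation $A\cup B=p_n^{-1}(A_n'\cup B_n')$ together with $\mu_n$ being the push-forward yields $\mu(Z_n\setminus(A\cup B))=0$, and $\xi_n$-measurability of $c_\mu$ gives $|c_\mu(x,Tx)-\lambda|<\eps'$ on $A$. Hence $\lambda\in\bar r_{Z_n,\eps'}(\RR,\mu)$, and Lemma~\ref{lapp} upgrades this to $\lambda\in r_{Z,\eps}(\RR,\mu)$. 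As $Z$ and $\eps$ were arbitrary, $\lambda\in r(\RR,\mu)$.

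The main technical obstacle is the third paragraph: passing from subsets and transformations in $X_n$ back to $X$ while preserving the two crucial features of the $\bar r$-type witnesses—namely, that $A\cup B$ fills $Z_n$ modulo null and that the $\eps'$-cocycle estimate is not distorted. Both rely on $\mu_n$ being the genuine push-forward of $\mu$ and on $c_\mu$ being $\xi_n$-measurable; once this is granted, Lemma~\ref{lapp} absorbs the small discrepancy $\mu(Z\Delta Z_n)$.
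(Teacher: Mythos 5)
Your argument is correct and is precisely the intended one: the paper states Proposition~\ref{psequence} without proof, as an ``immediate consequence of Lemma~\ref{lapp}'', and your write-up supplies exactly the details that remark presupposes --- the easy inclusion by pushing witnesses forward along $p_n$ (using that $c_\mu$ descends to $c_{\mu_n}$), and the converse by lifting $\bar r$-type witnesses from a $\xi_n$-measurable set approximating $Z$ and invoking Lemma~\ref{lapp} to absorb the discrepancy. Nothing further is needed.
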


Now assume that we have an ergodic countable measurable equivalence relation $\RR$ on $(X,\mu)$, meaning that for every $\RR$-invariant measurable set either the set itself or its complement has measure zero. Since $r(\RR,\mu)\setminus\{0\}$ is a closed subgroup of $\R^*_+$, we have the following possibilities for the ratio set: $\{1\}$, $\{0,1\}$, $\{0\}\cup\{\lambda^n\mid n\in\Z\}$ for some $\lambda\in(0,1)$, and $[0,+\infty)$. In the last three cases the equivalence relation is said to be of
type III$_0$, III$_\lambda$, and III$_1$, respectively.

\smallskip

If $\RR$ is ergodic, it is easy to see that for every set $Z\subset X$ of positive measure the ratio set of the equivalence relation on $Z$ induced by $\RR$ is the same as the ratio set of $\RR$.

\smallskip

Denote by $\RR^0$ the kernel of $c_\mu$, that is, the set
of points $(x,y)\in\RR$ such that $c_\mu(x,y)=1$. It is a measurable equivalence relation, and the measure $\mu$ is $\RR^0$-invariant. The next proposition follows from a general result on type III factors, see~\cite[Corollary~29.12]{Stra}, but it is much more elementary.

\begin{proposition} \label{pergodic}
Let $\RR$ be an ergodic non-singular countable measurable equivalence relation on a standard measure space $(X,\mu)$. Assume $\RR^0$ is ergodic. Then $r(\RR,\mu)$ coincides with the essential range of $c_\mu$.

Conversely, assume $r(\RR,\mu)$ coincides with the essential range of $c_\mu$. Also assume that $r(\RR,\mu)\setminus\{0\}$ is discrete, so $\RR$ is not of type III$_1$. Then $\RR^0$ is ergodic.
\end{proposition}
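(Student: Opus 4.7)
The plan is to handle the two directions separately.

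For the forward direction, assume $\RR^0$ is ergodic and let $\lambda$ belong to the essential range of $c_\mu$. Given a positive-measure set $Z \subset X$ and $\eps > 0$, the defining property of the essential range furnishes positive-measure $A_0, B_0 \subset X$ and a measurable bijection $T_0 : A_0 \to B_0$ with graph in $\RR$ and $|c_\mu(x, T_0 x) - \lambda| < \eps$ for all $x \in A_0$. Ergodicity of the countable equivalence relation $\RR^0$ provides, by the standard exhaustion argument based on the Feldman--Moore decomposition, partial $\RR^0$-isomorphisms $\psi_1$ from a subset of $Z$ onto a subset of $A_0$ and $\psi_2$ from a subset of $B_0$ onto a subset of $Z$. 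Since $\RR^0$-maps have trivial cocycle, the composite $\psi_2 \circ T_0 \circ \psi_1$, restricted to the largest subdomain on which it makes sense, is a measurable bijection between positive-measure subsets of $Z$ with graph in $\RR$ and cocycle still within $\eps$ of $\lambda$. Hence $\lambda \in r(\RR, \mu)$; the reverse inclusion is immediate from the definition.

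For the converse, assume $r(\RR, \mu)$ equals the essential range of $c_\mu$ and that $r(\RR, \mu) \setminus \{0\}$ is discrete. If $r(\RR, \mu) = \{1\}$, then $c_\mu \equiv 1$ a.e.\ and $\RR^0 = \RR$, so ergodicity is tautological. Otherwise write $r(\RR, \mu) \setminus \{0\} = \{r^k : k \in \Z\}$ with $r \in (0, 1)$ and encode $c_\mu = r^\nu$ for an integer-valued cocycle $\nu$, so $\RR^0 = \nu^{-1}(0)$. Suppose for contradiction that $\RR^0$ is not ergodic; let $\pi : X \to \bar X := X/\RR^0$ be the quotient and set $\bar\mu = \pi_*\mu$. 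The hypothesis that the essential range is all of $\{r^k\}$, together with ergodicity of $\RR$, implies that $\nu$ takes every integer value on almost every $\RR$-orbit, so $\nu$ induces a free ergodic $\Z$-action $(T_n)$ on $(\bar X, \bar\mu)$ with $T_1$ of constant Radon--Nikodym derivative $r$. By the Hopf decomposition $T_1$ is either dissipative or conservative on $\bar X$.

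In the dissipative case choose a measurable wandering fundamental domain $\bar E \subset \bar X$ of positive measure. In the conservative case invoke the Rokhlin lemma for aperiodic non-singular $\Z$-actions to obtain a positive-measure $\bar E$ with $\bar E \cap T_1\bar E = \emptyset$. Let $E = \pi^{-1}(\bar E)$, an $\RR^0$-invariant subset of $X$ of positive $\mu$-measure. For any $(x, y) \in \RR \cap (E \times E)$ with $\nu(x, y) = n$, the intersection $T_n \bar E \cap \bar E$ has positive measure; the wandering property forces $n = 0$ in the dissipative case, and disjointness forces $n \notin \{-1, 1\}$ in the conservative case. Thus $r$ is excluded from the essential range of $c_\mu$ on $\RR \cap (E \times E)$, since $r$ is isolated in the discrete set $\{0\} \cup \{r^k\}$. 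On the other hand, the observation just before the proposition combined with ergodicity of $\RR$ gives that the ratio set of the relation induced by $\RR$ on $E$ equals $r(\RR, \mu) \ni r$, and ratio sets are always contained in essential ranges, a contradiction. Hence $\RR^0$ is ergodic. The main delicate step is the conservative branch, where one needs the Rokhlin lemma for aperiodic non-singular $\Z$-actions on $\sigma$-finite measure spaces and must verify that omitting a single nonzero $r^k$ from the essential range cannot be remedied by the remaining values, which uses the discreteness hypothesis.
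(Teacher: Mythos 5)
Your forward direction is correct (the paper omits it, proving only the converse, which is the only part used in the main text): transporting a partial isomorphism that realizes an essential value into a prescribed set $Z$ by pre- and post-composing with partial isomorphisms of $\RR^0$, whose cocycle is trivial, is the standard argument, and your handling of the domains is adequate. For the converse you take a genuinely different and much heavier route than the paper. The paper argues directly: given an $\RR^0$-invariant $A$ with $\mu(A),\mu(A^c)>0$, ergodicity of $\RR$ gives $T\colon A_1\to B_1$ with $A_1\subset A$, $B_1\subset A^c$ and, by discreteness, constant cocycle $\lambda$; since $\lambda^{-1}\in r(\RR,\mu)$ one finds $S\colon B\to C$ with $B,C\subset B_1$ and constant cocycle $\lambda^{-1}$, and then $S\circ T$ carries a positive-measure subset of $A$ into $A^c$ with cocycle $1$, contradicting the invariance of $A$. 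You instead pass to the associated $\Z$-action on the space $\bar X$ of $\RR^0$-ergodic components. This is viable, but note that the well-definedness of $T_n$ on ergodic components (as opposed to $\RR^0$-classes) is precisely the content of the associated-action construction and is asserted rather than proved; likewise the claim that $T_1$ has constant Radon--Nikodym derivative $r$ is unjustified (fortunately it is not used later).

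The genuine gap is the assertion that the induced $\Z$-action on $\bar X$ is free. It need not be: if $\RR^0$ is non-ergodic with finitely many, say $d\ge 2$, ergodic components --- which is exactly the situation one most needs to exclude, since then the ratio set would be $\{0\}\cup r^{d\Z}$ while the essential range is all of $r^{\Z}$ --- the ergodic $\Z$-action on $\bar X\cong\Z/d\Z$ is transitive and periodic, hence conservative but not aperiodic, and the Rokhlin lemma invoked in your conservative branch does not apply. The gap is repairable, and in a way that makes the Hopf/Rokhlin machinery unnecessary: since the action is ergodic and $\bar X$ is not a single atom, $T_1$ is not the identity a.e.; choosing a countable separating family $\{F_i\}_i$ one has $\{\bar x\mid T_1\bar x\ne\bar x\}=\bigcup_i(F_i\,\Delta\, T_1^{-1}F_i)$, so some $\bar E=F_i\setminus T_1^{-1}F_i$ (or its mirror image $T_1^{-1}F_i\setminus F_i$) has positive measure and satisfies $\bar E\cap T_1^{-1}\bar E=\emptyset$; your final step then goes through verbatim in all cases. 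With that repair the argument is correct, but the paper's three-line construction of a cocycle-one map from $A$ into $A^c$ is considerably more economical.
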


\bp Let us prove the second statement, since it is the only one used in the main text.

Suppose $\RR^0$ is not ergodic, so there exists an $\RR^0$-invariant measurable subset $A\subset X$ such that both $A$ and $A^c=X\setminus A$ have positive measure. By ergodicity of $\RR$, there exist subsets $A_1\subset A$ and $B_1\subset A^c$ of positive measure and a measurable bijective map $T\colon A_1\to B_1$ with graph in $\RR$. Since the nonzero essential values of $c_\mu$ form a discrete set, by replacing~$A_1$ by a smaller set we may assume that there exists $\lambda>0$ such that $c_\mu(x,Tx)=\lambda$ for all $x\in A_1$. Since~$\lambda^{-1}$ lies in the ratio set and it is isolated in the set of essential values of $c_\mu$, we can find subsets $B\subset B_1$ and $C\subset B_1$ of positive measure and a measurable bijective map $S\colon B\to C$ with graph in $\RR$ such that $c_\mu(x,Sx)=\lambda^{-1}$ for all $x\in B$. Put $A_2=T^{-1}B$. Then $A_2$ is a set of positive measure, $A_2\subset A$, $STA_2\subset A^c$ and $c_\mu(x,STx)=c_\mu(x,Tx)c_\mu(Tx,STx)=1$ for all $x\in A_2$. This contradicts the $\RR^0$-invariance of $A$.
\ep

Now consider a particular class of equivalence relations.
Let $\{(X_n,\mu_n)\}_{n=1}^\infty$ be a sequence of at most countable
probability spaces. Put $(X,\mu)=\prod_n(X_n,\mu_n)$, and define an
equivalence relation $\RR$ on $X$ by
$$
x\sim y\ \ \text{if}\ \ x_n=y_n\ \ \text{for all} \ \ n
\ \ \text{large enough}.
$$
This equivalence relation is non-singular and ergodic.

For a finite subset $I\subset\N$ and $a\in\prod_{n\in I}X_n$ put
$$
Z(a)=\{x\in X\mid x_n=a_n\ \text{for}\ n\in I\}.
$$
The asymptotic ratio set $r_\infty(\RR,\mu)$ consists by
definition~\cite{AW} of all numbers $\lambda\ge0$ such that for any
$\eps>0$ there exist a sequence $\{I_n\}^\infty_{n=1}$ of mutually
disjoint finite subsets of $\N$, disjoint subsets
$K_n,L_n\subset\prod_{k\in I_n}X_k$ and bijections $\varphi_n\colon
K_n\to L_n$ such that
$$
\left|\frac{\mu(Z(\varphi_n(a)))}{\mu(Z(a))}-\lambda\right|<\eps\ \text{for all}\
a\in K_n\ \text{and}\ n\ge1,\ \ \text{and}\ \ \sum_{n=1}^\infty\sum_{a\in K_n}\mu(Z(a))=\infty.
$$
Clearly, the asymptotic ratio set is a closed subset of $[0,+\infty)$.

\begin{proposition}[\cite{kr}]
We have $r_\infty(\RR,\mu)\setminus\{0\}=r(\RR,\mu)\setminus\{0\}$.
\end{proposition}

The nontrivial part here is that $1\in r_\infty(\RR,\mu)$. What is not difficult to see, and this is more than enough for our purposes, is that
$$
r(\RR,\mu)\setminus\{1\}\subset r_\infty(\RR,\mu)\ \ \text{and}\ \ r_\infty(\RR,\mu)\setminus\{0\}\subset r(\RR,\mu).
$$
Let us sketch a proof of these inclusions. First of all note that the Radon-Nikodym cocycle is given~by
$$
c_\mu(x,y)=\prod_n\frac{\mu_n(y_n)}{\mu_n(x_n)}.
$$

Take $\lambda\in r(\RR,\mu)\setminus\{1\}$ and $\eps>0$. We can find measurable sets $A$ and $B$ and a measurable bijective map $T\colon A\to B$ with graph in $\RR$ such that $\mu(X\setminus(A\cup B))=0$ and $|c_\mu(x,Tx)-\lambda|<\eps$ for all $x\in A$. As we already noted, since $\lambda\ne1$, we can choose $A$ and $B$ to be disjoint. Then the measure of~$A$ is close to $(1+\lambda)^{-1}$. There exist $m\ge1$ and disjoint measurable subsets $A_1,\dots,A_N$ of~$A$ such that $\cup_iA_i$ is close to $A$ and the restriction of $T$ to $A_i$ is such that $Tx$ is obtained from $x\in A_i$ by applying a transformation $\varphi_{1i}$ to the first $m$ coordinates of $x$. Approximate the sets $A_i$ by unions of cylindrical sets $Z(a)$, $a\in K_1\subset\prod_{n\le n_1}X_n$, where $n_1\ge m$. We may assume that if $Z(a)$ is used in the approximation of $A_i$, then the ratio $\mu(Z(a)\cap A_i)/\mu(Z(a))$ is close to $1$. In particular, for every $a\in K_1$ the cylindrical set $Z(a)$ is used in approximating only one of the sets $A_i$. Then the transformations $\varphi_{1i}$ define an injective map $\varphi_1\colon K_1\to\prod_{n\le n_1}X_n$ such that $K_1\cap\varphi_1(K_1)=\emptyset$. By construction $\sum_{a\in K_1}\mu(Z(a))$ is close to $(1+\lambda)^{-1}$.

Next replace $X$ by $\prod_{n>n_1}X_n$ and repeat the same argument. Note that the ratio set for the new product-space remains the same, since $\RR$ is ergodic and $\prod_{n>n_1}X_n$ can be identified with the cylindrical subset $Z(a)$ of~$X$ for some $a\in\prod_{n\le n_1}X_n$. Continuing this process we gradually construct the required sets $K_n$ and maps $\varphi_n$ and conclude that $\lambda\in r_\infty(\RR,\mu)$.

Now take $\lambda\in r_\infty(\RR,\mu)\setminus\{0\}$. Then for every $\eps>0$ we can find measurable sets $A_n$ and $B_n$ and measurable bijective maps $T_n\colon A_n\to B_n$ with graphs in $\RR$ such that $|c_\mu(x,T_nx)-\lambda|<\eps$ for all $x\in A_n$, $A_n\cap B_n=\emptyset$, $T_n$~maps $A_n\cap A_m$ onto $B_n\cap A_m$ and $A_n\cap B_m$ onto $B_n\cap B_m$,  the sets $A_n\cup B_n$ are mutually independent and $\sum_n\mu(A_n)=\infty$. Consider the sets
$$
A=\cup_n\left(A_n\setminus\cup_{m<n}(A_m\cup B_m)\right)\ \ \text{and}\ \ B=\cup_n\left(B_n\setminus\cup_{m<n}(A_m\cup B_m)\right).
$$
Then the maps $T_n$ define a measurable bijective map $T\colon A\to B$, $A\cap B=\emptyset$ and $\mu(A\cup B)=1$. Therefore $\lambda\in \bar r_{X,\eps}(\RR,\mu)$. Since the same is true if we replace $X$ by $\prod_{n\ge m}X_n$, we conclude that $\lambda\in \bar r_{Z,\eps}(\RR,\mu)$ for any $Z$ which is a finite union of cylindrical sets $Z(a)$. By Lemma~\ref{lapp} it follows that $\lambda\in r_{Z,\eps}(\RR,\mu)$ for any measurable set $Z$ of positive measure. Hence $\lambda\in r(\RR,\mu)$.

\bigskip

\end{document}